\documentclass[preprint,12pt,a4paper,colorlinks=true,allcolors=blue]{elsarticle}

\usepackage{amssymb}
\usepackage{amsthm}
\usepackage{hyperref}
\usepackage{paralist, mathdots} 
\usepackage{float}
\usepackage{paralist} 
\usepackage{amsmath,tikz,wasysym,multirow,enumitem,lscape,multirow,subfigure}
\usepackage{mathrsfs}

\newtheorem{theorem}{Theorem}[section]
\newtheorem{corollary}[theorem]{Corollary}
\newtheorem{lemma}[theorem]{Lemma}

\newtheorem{proposition}[theorem]{Proposition}
\theoremstyle{definition}
\newtheorem{example}[theorem]{Example}
\newtheorem{remark}[theorem]{Remark}
\newtheorem{definition}[theorem]{Definition}


\newcommand{\R}{\mathbb{R}}
\newcommand{\mc}{\mathcal}
\newcommand{\rk}{\mathrm{rk}}  
\newcommand{\spr}{\mathrm{st}_+}
\newcommand{\cp}{\mathrm{cp}}
\newcommand{\sign}{\mathrm{sign}}
\newcommand{\C}{\mathscr C}
\newcommand{\norm}[1]{\ensuremath{\left\|{#1}\right\|}}
\DeclareMathOperator{\per}{per}
\DeclareMathOperator{\starc}{star}

\begin{document}

\begin{frontmatter}

\title{{\bf Symmetric Nonnegative Trifactorization of Pattern Matrices}}

\author[inst1,inst2]{Damjana Kokol Bukov\v{s}ek}
\affiliation[inst1]{organization={University of Ljubljana, School of Economics and Business},
          country={Slovenia}}
\affiliation[inst2]{organization={Institute of Mathematics Physics and Mechanics},
            city={Ljubljana},
           country={Slovenia}}

\author[inst3]{Helena \v{S}migoc}
\affiliation[inst3]{organization={School of Mathematics and Statistics, University College Dublin},
          country={Ireland}}

\begin{abstract}
A factorization of an $n \times n$ nonnegative symmetric matrix $A$ of the form $BCB^T$, where $C$ is a $k \times k$ symmetric matrix, and both $B$ and $C$ are required to be nonnegative, is called the Symmetric Nonnegative Matrix Trifactorization (SN-Trifactorization). The SNT-rank of $A$ is the minimal $k$ for which such factorization exists. The SNT-rank of a simple graph $G$ that allows loops is defined to be the minimal possible SNT-rank of all symmetric nonnegative matrices whose zero-nonzero pattern is prescribed by a given graph.

We define set-join covers of graphs, and show that finding the SNT-rank of $G$ is equivalent to finding the minimal order of a set-join cover of $G$. Using this insight we develop basic properties of the SNT-rank for graphs and compute it for trees and cycles without loops. We show the equivalence between the SNT-rank for complete graphs and the Katona problem, and discuss uniqueness of patterns of matrices in the factorization. 
\end{abstract}

\begin{keyword}
Nonnegative Matrix Factorization; Nonnegative Symmetric Matrices; Symmetric Nonnegative Trifactorization; Pattern Matrices
\MSC[2020] 15A23 \sep 15B48
\end{keyword}

\end{frontmatter}

\section{Introduction and Notation}\label{sec:1}

Factorizations of matrices where the factors are required to be entry-wise nonnegative provide a powerful tool in analysing nonnegative data. In this paper we consider a factorization of nonnegative  symmetric matrices, which takes into account  symmetry, nonnegativity and low rank of a matrix. 

Denote by $\R_+$ the set of nonnegative real numbers, by $\R^{n \times m}$ the set of $n \times m$ real matrices, and by $\R_+^{n \times m}$ the set of  $n \times m$ entry-wise nonnegative matrices. Furthermore, we denote
$$\mc S_n^+ = \{A \in \R_+^{n \times n}; A = A^T \}.$$

\begin{definition}
A factorization of $A \in  \mc S_n^+$ of the form $BCB^T$, where $B \in \R_+^{n \times k}$ and $C \in \mc S_k^+$, is  called \emph{Symmetric Nonnegative Trifactorization} of $A$ (\emph{SN-Trifactorization} for short). Minimal possible $k$ in such factorization is 
called the \emph{SNT-rank} of $A$, and is denoted by $\spr(A)$.
\end{definition}

The SN-Trifactorization was studied in \cite{MR4547308}, and is closely related to two other well known factorizations that feature nonnegative factors: Nonnegative Matrix Factorization and Completely Positive Factorization. We refer the reader to \cite{NMFGillis} for the background on the Nonnegative Matrix Factorization and to \cite{MR1986666} for the background on the Completely Positive Factorization.

The zero-nonzero pattern of a nonnegative matrix $A \in  \mc S_n^+$ poses constrains on the zero-nonzero pattern of nonnegative matrices $B \in \R_+^{n \times k}$ and $C \in \mc S_k^+$ satisfying $A=BCB^T$. The aim of this paper is to better understand those restrictions.
 
In the context of matrix patterns, graphs arise naturally. In this work, a graph $G=(V(G),E(G))$ will always be a simple undirected graph that allows loops. Hence, $E(G) \subseteq \{\{i,j\}; i,j \in V(G) \}$, where $\{i\} \in E(G)$ corresponds to a loop on the vertex $i$ in $G$. A vertex $i\in V(G)$ is \emph{isolated}, if $\{i,j\} \not\in E(G)$ for all $j \in V(G)$. In particular, $i$ does not have a loop. The vertex set $V(G)$ will often be $[n]:=\{1,2,\ldots, n\}$.

Let $G$ and $H$ be two simple graphs that allow loops. We denote by $G \cup H$ their disjoint union, by $tG$ the union of $t$ copies of $G$ and by $G \vee H$ the join of $G$ and $H$, i.e. the graph with  $V(G \vee H) = V(G) \cup V(H)$ and $E(G \vee H) = E(G) \cup E(H) \cup \{\{i,j\} ~|~ i \in V(G), j \in V(H)\}$. We use standard notation for graphs without any loops: we denote by $K_n$ the complete graph on $n$ vertices, by $K_{m,n}$ the bipartite graph on $m+n$ vertices, by $P_n$ the path, and by $C_n$ the cycle on $n$ vertices. We denote by $K^\ell_n$ the complete graph on $n$ vertices with all possible loops. 

We denote matrices by capital letters, $A \in \R^{n \times m}$, vectors by bold letters, ${\bf a} \in \R^n$, the zero matrix in $\R^{n \times m}$ by $0_{n \times m}$ and the zero vector in $\R^n$ by ${\bf 0}_n$. 

Let $A, B \in \R^{n \times m}$. Then $A > B$ means 
that $A-B$ is entry-wise positive, and $A \ge B$ denotes entry-wise nonnegativity of $A-B$.  
\emph{The support} of a vector ${\bf a} \in \R^n$ is the set of all indices in $i \in [n]$ for which $a_i \ne 0$. For $A\in \R^{n \times m}$ and $\mc S \subset [n], \mc T \subset [m]$, we denote by $A[\mc S, \mc T]$ the submatrix of $A$ containing entries $a_{ij}$ for all $i \in \mc S, j \in \mc T$. 

After a brief introduction and development of notation in Section \ref{sec:1}, we dedicate Section \ref{sec:pattern} to the definition of the SNT-rank and set-join covers  of simple graphs that allow loops. We prove that finding the SNT-rank of a graph $G$ is equivalent to determining minimal order of the set-join cover of $G$. Using this insight we develop basic properties of SNT-rank of graphs. 
We conclude Section \ref{sec:pattern} by defining uniqueness of optimal set-join covers of graphs. In Section \ref{sec:without-loops} we find the SNT-rank of trees  without loops and consider the unicyclic graphs  without loops. Section \ref{sec:complete} is dedicated to the SNT-rank and optimal set-join covers of complete graphs without loops. Section \ref{sec:motivation} offers a selection of possible applications and further research directions motivated by the work in this paper. 

\section{Set-join covers and $\spr(G)$}\label{sec:pattern}

 In this work we will for the most part take aside the actual values of matrices and only consider their patterns. Our main focus will be the question, how small can $\spr(A)$ be among all matrices $A$ with a given zero-nonzero pattern. 
 
\subsection{Setup}

We begin by defining $\spr(G)$ for a  graph $G$, and developing a combinatorial question that is equivalent to finding $\spr(G)$. 
 
\begin{definition}
Let $A \in \R_+^{n \times m}$. The \emph{pattern matrix} of $A$ is the matrix $\mathrm{sign}(A) \in \{0,1\}^{n \times m}$ defined by
$$ \sign(A)_{ij} =  \begin{cases} 1; &\mbox{if } a_{ij} > 0, \\
0; & \mbox{if } a_{ij} = 0. \end{cases}$$
(The pattern matrix is in some literature called \emph{the derangement matrix}.)
\end{definition}

\begin{definition}
\emph{The pattern graph} $G(A)=(V(G), E(G))$ of a matrix  $A \in {\mc S}_n^+$ is defined by $V(G)=[n]$, and $\{i,j\} \in E(G)$ precisely when $a_{ij} > 0$. 
\end{definition}

Let $G$ be a graph with $|V(G)|=n$. By $\mc S^+(G)$ we  denote the set of all matrices in $\mc S^+_n$ with the pattern graph $G$, and ask the question how small can the SNT-rank be on this set. 

\begin{definition}
Let $G=(V(G),E(G))$ be a simple graph with loops. We define:
$\spr(G):=\min\{\spr(A); A \in \mc S^+(G)\}$.
\end{definition}

\begin{remark}
A factorization of $A \in  \R_+^{n \times m}$ of the form $UV^T$, where $U \in \R_+^{n\times k}$ and $V \in \R_+^{m \times k}$, is  called Nonnegative Matrix Factorization of~$A$. Minimal possible $k$ in such factorization is 
called the NMF-rank of~$A$, and is denoted by $\rk_+(A)$ (see \cite{NMFGillis}). In this context also Boolean rank is studied. It is defined as 
$$ \rk_{01}(A) = \min\{\rk_+(B); B \in  \R_+^{n \times m}, \sign(B) = \sign(A)\}.$$
It is easy to see that Boolean rank is equal to rectangle covering bound, i.e. the minimum number of rectangles needed to cover all nonzero entries in $A$ (see \cite{NMFGillis}).
\end{remark}

We proceed to develop basic properties of $\spr(G)$. We start with a simple proposition which follows from the property that for $A\in \mc S_n^+,B \in \mc S_m^+$ we have $\spr(A \oplus B) =\spr(A)+\spr(B)$, see \cite{MR4547308}.

\begin{proposition} \label{prop:union}
For any graphs $G, H$ we have $\spr(G \cup H) =\spr(G)+\spr(H)$. In particular, $\spr(G \cup K_1)=\spr(G)$ as $\spr(K_1)=0$.
\end{proposition}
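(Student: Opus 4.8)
The plan is to reduce everything to the matrix-level identity $\spr(A \oplus B) = \spr(A) + \spr(B)$ quoted from \cite{MR4547308}, so that the only real content is bookkeeping between graphs and matrices. Two observations make this routine. First, $\spr$ is invariant under simultaneous row–column permutation: if $A = BCB^T$ is an SN-Trifactorization and $P$ is a permutation matrix, then $PAP^T = (PB)C(PB)^T$ with $PB$ still entrywise nonnegative, and conversely, so $\spr(PAP^T) = \spr(A)$. Second, since $G$ and $H$ have disjoint vertex sets with no edges between them, a matrix $M$ lies in $\mc S^+(G \cup H)$ if and only if, after relabelling vertices so that the vertices of $G$ come first, $M = A \oplus B$ for some $A \in \mc S^+(G)$ and $B \in \mc S^+(H)$.

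For the upper bound, I would pick $A \in \mc S^+(G)$ with $\spr(A) = \spr(G)$ and $B \in \mc S^+(H)$ with $\spr(B) = \spr(H)$; these exist because $\mc S^+(G)$ and $\mc S^+(H)$ are nonempty, e.g. the $0/1$ adjacency matrix of a graph (with $1$'s on the diagonal at loops) lies in the corresponding set. Then $A \oplus B \in \mc S^+(G \cup H)$, so $\spr(G \cup H) \le \spr(A \oplus B) = \spr(A) + \spr(B) = \spr(G) + \spr(H)$.

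For the lower bound, take $M \in \mc S^+(G \cup H)$ with $\spr(M) = \spr(G \cup H)$. By the second observation, together with permutation invariance, we may write $M = A \oplus B$ with $A \in \mc S^+(G)$ and $B \in \mc S^+(H)$. Then $\spr(G \cup H) = \spr(A \oplus B) = \spr(A) + \spr(B) \ge \spr(G) + \spr(H)$, the last step being the definition of $\spr(G)$ and $\spr(H)$ as minima over $\mc S^+(G)$ and $\mc S^+(H)$. Combining the two bounds yields the equality. Finally, $K_1$ is the graph with a single vertex and no loop, so $\mc S^+(K_1) = \{[0]\}$ and $[0] = BCB^T$ holds with the empty factorization ($k = 0$), giving $\spr(K_1) = 0$; substituting $H = K_1$ into the equality just proved gives $\spr(G \cup K_1) = \spr(G)$.

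The only delicate point is whether an optimal factorization of a block-diagonal matrix can ``mix'' the two blocks: this is precisely what the cited identity $\spr(A \oplus B) = \spr(A) + \spr(B)$ forbids, and without it one would obtain only the easy inequality $\spr(G \cup H) \le \spr(G) + \spr(H)$. So that subadditivity-versus-superadditivity issue is the genuine obstacle, but it is disposed of by \cite{MR4547308}, and the rest of the argument is the translation between the matrix statement and the graph statement described above.
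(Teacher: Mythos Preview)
Your proof is correct and follows the same approach as the paper: both reduce the graph statement to the matrix-level identity $\spr(A \oplus B) = \spr(A) + \spr(B)$ from \cite{MR4547308}. The paper states this reduction in a single sentence, whereas you spell out the translation between $\mc S^+(G \cup H)$ and block-diagonal matrices in detail, but the substance is identical.
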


The next result connects the patterns of nonnegative matrices $A$, $B$ and $C$, provided $A=BCB^T$. 

\begin{proposition}\label{pattern}
Let $A=BCB^T$ be SN-Trifactorization of $A=(a_{ij}) \in\mathcal S_n^+$. For $i\in [n]$ let $r_i(B)^T \in \R^{1\times k}$ be the $i$-th row of $B$, and $\mc R_i \subseteq \{1,\ldots,k\}$ the support of $r_i(B)$, i.e. $\mc R_i:=\{j; (r_i(B))_j\neq 0\}$. Then $C[\mc R_i, \mc R_j]=0$ if and only if $a_{ij}=0$. 
\end{proposition}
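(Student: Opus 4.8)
The plan is to compute the $(i,j)$-entry of $A = BCB^T$ directly in terms of the rows of $B$ and the block $C[\mc R_i, \mc R_j]$, and then argue that the sum involved has no cancellation because all quantities are nonnegative. Writing $r_i(B)^T$ for the $i$-th row of $B$, we have
\[
a_{ij} = r_i(B)^T C \, r_j(B) = \sum_{p=1}^k \sum_{q=1}^k (r_i(B))_p \, c_{pq} \, (r_j(B))_q.
\]
Since $(r_i(B))_p = 0$ whenever $p \notin \mc R_i$ and $(r_j(B))_q = 0$ whenever $q \notin \mc R_j$, the only terms that can be nonzero are those with $p \in \mc R_i$ and $q \in \mc R_j$, so the double sum reduces to a sum over $p \in \mc R_i$, $q \in \mc R_j$ of the products $(r_i(B))_p \, c_{pq} \, (r_j(B))_q$, where each of the three factors is strictly positive by the definition of the supports $\mc R_i$, $\mc R_j$ together with the nonnegativity of $C$ applied to its entries $c_{pq}$.

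The key point — and really the only substantive step — is the absence of cancellation: every summand $(r_i(B))_p \, c_{pq} \, (r_j(B))_q$ is nonnegative, because $B$ and $C$ are entrywise nonnegative. Hence $a_{ij} = 0$ if and only if every summand is $0$, which (since the factors coming from $B$ are strictly positive on the relevant index sets) happens if and only if $c_{pq} = 0$ for all $p \in \mc R_i$ and all $q \in \mc R_j$, i.e.\ if and only if $C[\mc R_i, \mc R_j] = 0$. For the converse direction one simply reads the same equivalence backwards: if $C[\mc R_i,\mc R_j]=0$ then every summand vanishes and $a_{ij}=0$; if $C[\mc R_i,\mc R_j]\neq 0$, pick $p\in\mc R_i$, $q\in\mc R_j$ with $c_{pq}>0$ and note the corresponding summand is strictly positive while all others are nonnegative, forcing $a_{ij}>0$.

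I do not anticipate a genuine obstacle here; the statement is essentially a bookkeeping consequence of nonnegativity. The only thing to be careful about is to phrase the argument symmetrically in $i$ and $j$ and to handle the degenerate cases cleanly — for instance when $\mc R_i$ or $\mc R_j$ is empty (then the $i$-th or $j$-th row of $B$ is zero, $a_{ij}=0$, and $C[\mc R_i,\mc R_j]$ is a vacuous, hence zero, submatrix, so the equivalence still holds), and when $i=j$, where the same computation with $\mc R_i=\mc R_j$ gives the loop case. It may be worth isolating the no-cancellation observation as a one-line remark, since it will presumably be reused when set-join covers are extracted from such factorizations later in the section.
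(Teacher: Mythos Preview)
Your argument is correct and is essentially the same as the paper's: both compute $a_{ij}=r_i(B)^T C\, r_j(B)$, restrict the sum to the supports $\mc R_i,\mc R_j$, and use nonnegativity to rule out cancellation. The paper just compresses this into a single line via $a_{ij}=(r_i(B)[\mc R_i])^T C[\mc R_i,\mc R_j]\,(r_j(B)[\mc R_j])$ with $r_s(B)[\mc R_s]>0$.
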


\begin{proof}
Since $a_{ij}=r_i(B)^TCr_j(B)=(r_i(B)[\mc R_i])^TC[\mc R_i,\mc R_j](r_j(B)[\mc R_j])$, and $r_s(B)[\mc R_s]>0$  by the definition of $\mc R_s$, the conclusion follows. 
\end{proof}

Note that $\sign(B)$  in Proposition \ref{pattern} is the incidence matrix of ${\bf R}=(\mc R_1, \ldots, \mc R_n)$. We recall the definition of the incidence matrix below. 

\begin{definition}
Let ${\bf  R}=(\mc R_1, \ldots,\mc R_n)$  be a list of sets $\mc R_i \subseteq \{1,\ldots, k\}$. \emph{The incidence matrix} of ${\bf R}$ is an $n \times k$ matrix $\iota({\bf R})$ with
$$\iota({\bf R})_{ij}=\begin{cases}
 1; & j \in \mc R_i, \\
 0; & j \not\in \mc R_i.
\end{cases}$$  
\end{definition}

Since $\spr(G)$ is a parameter of the graph $G$, we would like to determine it directly from $G$. For this we need to introduce the set-join and the set-join cover.

\begin{definition}
Let $\mc S$ be a finite set and $\mc K, \mc L $ two nonempty subsets of $\mc S$ with possibly nonempty intersection. We define \emph{the set-join of $\mc K$ and $\mc L$ on $\mc S$}, denoted by $\mc K \vee_{\mc S} \mc L$, to be the graph with $V(\mc K \vee_{\mc S} \mc L)=\mc S$, and $E(\mc K \vee_{\mc S} \mc L)=\{\{i,j\}; i \in \mc K, j \in \mc L\}$. 
\end{definition}

Note that $\mc K \vee_{\mc S} \mc L$ has a loop on vertex $i$ precisely when $i \in \mc K \cap \mc L$, and that if $\mc K \cup \mc L \neq \mc S$, then $\mc K \vee_{\mc S} \mc L$ contains some isolated vertices without loops.

\begin{definition}
Let $G$ be a graph and $\mc K_i, \mc L_i \subseteq V(G)$. We say that $$\C=\{\mc K_i \vee_{V(G)} \mc L_i, i \in [t]\}$$ is \emph{a set-join cover} of $G$ if  
$E(G)=\bigcup_{i=1}^t E(\mc K_i \vee_{V(G)} \mc L_i)$.

For a set-join cover $\C$ we define:
\begin{itemize}
\item \emph{ the component set} of $\C$ to be: $$V(\C)=\{\mc K_i; i \in [t]\}\cup \{\mc L_i; i \in [t]\},$$
\item \emph{the order} of $\C$ to be $|\C|=|V(\C)|$, 
\item \emph{the graph} of $\C$ denoted by $G(\C)$ to be a graph with $V(G(\C))=V(\C)$ and $\{\mc K_i,\mc K_j\} \in E(G(\C))$ if and only if $\mc K_i \vee_{V(G)} \mc K_j \in \C$.
\end{itemize}

\end{definition}

Note that in the above definition we allow $\mc K_i =\mc K_j$ or $\mc K_i=\mc L_j$, so $|\C|$ can be smaller than $2t$, and can in the extreme case even be equal to $1$. 

\begin{definition}
    Let $G$ be graph with a set-join cover $\C$, and $\mc S \subseteq V(G)$. \emph{The restriction of $\C$ to $\mc S$} is the set
    $$\C[\mc S]=\{(\mc K\cap \mc S)\vee_{V(G)\cap \mc S} (\mc L\cap \mc S); \mc K\vee_{V(G)} \mc L \in \C, \mc K\cap \mc S\not= \emptyset, \mc L\cap \mc S\not= \emptyset\}.$$
\end{definition}

In the definition above, $\C[\mc S]$ is a set-join cover of a subgraph of $G$ induced on $V(G)\cap \mc S$ that satisfies $|\C[\mc S]|\leq |\C|$ . 

\begin{theorem} \label{th:main}
Let $G$ be a graph. Then 
$\spr(G)=\min\{|\C|;\, \C$ a set-join cover of   $G\}$.
\end{theorem}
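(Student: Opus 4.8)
The plan is to prove the two inequalities $\spr(G)\le\min_{\C}|\C|$ and $\spr(G)\ge\min_{\C}|\C|$ separately, in each direction passing between an SN-Trifactorization $A=BCB^T$ and a set-join cover via the following dictionary: for each column index $\ell$ of $B$ put $\mc M_\ell:=\{i\in V(G)\,;\,B_{i\ell}\ne 0\}$, the support of the $\ell$-th column of $B$ (equivalently, of $\sign(B)=\iota({\bf R})$). The single engine used in both directions is Proposition \ref{pattern}: since $B$ and $C$ are nonnegative, $a_{ij}=0\iff C[\mc R_i,\mc R_j]=0$, i.e. $a_{ij}>0$ iff there exist $\ell\in\mc R_i$ and $m\in\mc R_j$ with $C_{\ell m}>0$. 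Because $\ell\in\mc R_i\iff i\in\mc M_\ell$, this says exactly that $\{i,j\}\in E(G)$ iff $i$ and $j$ are joined by some set-join $\mc M_\ell\vee_{V(G)}\mc M_m$ with $C_{\ell m}>0$ (loops included, as a singleton $\{i\}$ is an edge of $\mc M_\ell\vee_{V(G)}\mc M_m$ precisely when $i\in\mc M_\ell\cap\mc M_m$).

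For the upper bound, start from a set-join cover $\C$ with component set $V(\C)=\{\mc M_1,\dots,\mc M_k\}$, $k=|\C|$. Define $B\in\R_+^{n\times k}$ by letting $B_{i\ell}$ be any positive number when $i\in\mc M_\ell$ and $0$ otherwise, and define $C\in\mc S_k^+$ by $C_{\ell m}>0\iff\mc M_\ell\vee_{V(G)}\mc M_m\in\C$ (equivalently $\{\mc M_\ell,\mc M_m\}\in E(G(\C))$); this prescription is symmetric in $\ell,m$ because $\mc K\vee_{V(G)}\mc L$ and $\mc L\vee_{V(G)}\mc K$ are the same graph. Set $A:=BCB^T\in\mc S_n^+$. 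By Proposition \ref{pattern}, $\{i,j\}$ is an edge of the pattern graph of $A$ exactly when some member $\mc M_\ell\vee_{V(G)}\mc M_m$ of $\C$ satisfies $i\in\mc M_\ell$, $j\in\mc M_m$, which by the definition of a set-join cover is precisely $E(G)$; no accidental cancellation occurs since $a_{ij}$ is a sum of nonnegative terms. Hence $A\in\mc S^+(G)$ and $\spr(G)\le\spr(A)\le k=|\C|$; minimizing over $\C$ gives $\spr(G)\le\min_\C|\C|$.

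For the lower bound, take $A\in\mc S^+(G)$ with $\spr(A)=\spr(G)=:k$ and a factorization $A=BCB^T$, $B\in\R_+^{n\times k}$, $C\in\mc S_k^+$. With $\mc M_\ell:=\{i\,;\,B_{i\ell}\ne 0\}$ as above, set
$$\C:=\{\mc M_\ell\vee_{V(G)}\mc M_m\,;\,C_{\ell m}>0,\ \mc M_\ell\ne\emptyset\ne\mc M_m\}.$$
Its component set is the collection of distinct nonempty sets among $\mc M_1,\dots,\mc M_k$, so $|\C|=|V(\C)|\le k$. That $\C$ covers $G$ and introduces no extra edges follows from Proposition \ref{pattern} applied in both directions exactly as before: $\{i,j\}\in E(G)\iff a_{ij}>0\iff$ some $\ell\in\mc R_i$, $m\in\mc R_j$ have $C_{\ell m}>0\iff\{i,j\}\in E(\mc M_\ell\vee_{V(G)}\mc M_m)$ for some member of $\C$. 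Therefore $\min_\C|\C|\le k=\spr(G)$, and combining the two inequalities proves the theorem.

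The equivalence of edge sets in each direction is immediate from Proposition \ref{pattern}, so the only point requiring genuine care is the bookkeeping in degenerate cases of the lower bound: zero columns of $B$ must be discarded (their support is empty and cannot form a set-join), and columns with equal supports collapse to a single element of $V(\C)$ — it is precisely these two observations that keep the order of the constructed cover at most $k$. I therefore expect this degenerate-case bookkeeping, rather than the main argument, to be the only delicate part of the write-up.
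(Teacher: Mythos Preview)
Your proof is correct and follows essentially the same approach as the paper: both directions pass between an SN-Trifactorization $A=BCB^T$ and a set-join cover via the column supports $\mc M_\ell$ of $B$, with the middle matrix $C$ encoding which pairs of components are joined. The paper argues directly from $\sign(A)=\sign\big(\sum_{\{i,j\}\in E(G(C))}(c_i(B)c_j(B)^T+c_j(B)c_i(B)^T)\big)$ rather than invoking Proposition~\ref{pattern}, but this is the same computation; your write-up is in fact slightly more careful than the paper's, which asserts $|\C|=k$ without explicitly discarding zero columns or collapsing columns with equal support.
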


\begin{proof}
 Let $G$ be a graph with $V(G)=[n]$, and $A \in \mathcal{S}^+(G)$ with SN-Trifactorization $A=BCB^T$, where $C \in \mathcal{S}_k^+$. For $i \in [k]$, let $c_i(B)$ be the $i$-th column of $B$, and $\mathcal L_i$ the support of the $c_i(B)$. Clearly, $c_i(B)c_j(B)^T+c_j(B)c_i(B)^T \in \mc{S}^+(\mc L_i \vee_{[n]} \mc L_j)$, and 
$$\sign(A)=\sign\left\{\sum_{\{i,j\} \in E(G(C))}\big(c_i(B)c_j(B)^T+c_j(B)c_i(B)^T\big)\right\}.$$
We deduce that $\C=\{\mc L_i \vee_{[n]} \mc L_j; \{i,j\}\in E(G(C))\}$ is a set-join cover of $G$ with $|\C|=k$. 

Conversely, let $\C=\{\mc L_i \vee_{[n]} \mc K_i; i \in [t]\}$ be a set-join cover of $G$, and let us list all the elements in $V(\C)$ in some fixed order: $(\widehat{ \mc L}_1,\widehat{ \mc L}_2,\ldots,\widehat{ \mc L}_{|\C|})$. We define $C$ to be a $|\C|\times |\C|$ matrix with:
$$c_{ij}=\begin{cases}
1; &\text{if } \widehat{ \mc L}_i\vee_{[n]}\widehat{ \mc L}_j \in \C, \\
0; &\text{otherwise},
\end{cases}$$
 and $B$ to be an $n \times |\C|$ matrix with 
 $$b_{ij}=\begin{cases}
1, &\text{if } i \in \widehat{\mc L}_j \\
0, &\text{otherwise}.
\end{cases}$$
In other words, $C$ is the zero-one matrix in $\mc S^+(G(\C))$, and $B^T$ is the incidence matrix of $(\widehat{ \mc L}_1,\widehat{ \mc L}_2,\ldots,\widehat{ \mc L}_{|\C|})$.
The proof is completed by noting that $BCB^T \in \mc S^+(G)$. 
\end{proof}

\begin{definition}
    If $\C$ is a set-join cover of $G$ with of order $\spr(G)$, then we say that $\C$ is an \emph{optimal set-join cover} for $G$. We will use the abbreviation \emph{OSJ cover} for $G$. 
\end{definition}

\begin{proposition}\label{prop:OSJgraph}
  An OSJ cover $\C$ of $G$ satisfies $$\spr(G(\C))=|\C|=|V(G(\C))|.$$
\end{proposition}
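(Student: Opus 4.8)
The plan is to establish the two displayed equalities separately. The equality $|\C|=|V(G(\C))|$ is immediate from the definitions, since $V(G(\C))=V(\C)$ and $|\C|$ was defined to be $|V(\C)|$. The real content is $\spr(G(\C))=|\C|$, which I would prove by two inequalities.

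For $\spr(G(\C))\le|\C|$ I would apply Theorem~\ref{th:main} to $G(\C)$: the family $\{\{v\}\vee_{V(G(\C))}\{w\};\ \{v,w\}\in E(G(\C))\}$ of singleton set-joins covers every edge (and loop) of $G(\C)$, and its component set is contained in $\{\{v\};\ v\in V(G(\C))\}$, so it is a set-join cover of $G(\C)$ of order at most $|V(G(\C))|=|\C|$. Hence $\spr(G(\C))\le|\C|$.

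For the reverse inequality it suffices to show $\spr(G)\le\spr(G(\C))$, since optimality of $\C$ turns this into $|\C|\le\spr(G(\C))$. The key step is to ``push a set-join cover of $G(\C)$ forward along $\C$''. Write $\C=\{\mc K_i\vee_{V(G)}\mc L_i;\ i\in[t]\}$, so $V(G(\C))=V(\C)$ is a collection of nonempty subsets of $V(G)$. Given any set-join cover $\mathcal D=\{\mc A_j\vee_{V(G(\C))}\mc B_j;\ j\in[s]\}$ of $G(\C)$ (each $\mc A_j,\mc B_j$ a nonempty set whose elements are subsets of $V(G)$), put $\widehat{\mc X}:=\bigcup_{\mc Y\in\mc X}\mc Y\subseteq V(G)$ for $\emptyset\neq\mc X\subseteq V(G(\C))$, which is nonempty, and set $\widehat{\mathcal D}:=\{\widehat{\mc A}_j\vee_{V(G)}\widehat{\mc B}_j;\ j\in[s]\}$. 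I claim $\widehat{\mathcal D}$ is a set-join cover of $G$. For $E(G)\subseteq\bigcup_j E(\widehat{\mc A}_j\vee_{V(G)}\widehat{\mc B}_j)$: given $\{p,q\}\in E(G)$, since $\C$ covers $G$ there is $i$ with (after possibly swapping $p,q$) $p\in\mc K_i$ and $q\in\mc L_i$, hence $\{\mc K_i,\mc L_i\}\in E(G(\C))$; since $\mathcal D$ covers $G(\C)$ there is $j$ with (after possibly swapping) $\mc K_i\in\mc A_j$ and $\mc L_i\in\mc B_j$, so $p\in\mc K_i\subseteq\widehat{\mc A}_j$ and $q\in\mc L_i\subseteq\widehat{\mc B}_j$. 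For the reverse inclusion: if $p\in\widehat{\mc A}_j$ and $q\in\widehat{\mc B}_j$, pick $\mc K\in\mc A_j$ with $p\in\mc K$ and $\mc L\in\mc B_j$ with $q\in\mc L$; then $\{\mc K,\mc L\}\in E(\mc A_j\vee_{V(G(\C))}\mc B_j)\subseteq E(G(\C))$ because $\mathcal D$ is a set-join cover, so $\mc K\vee_{V(G)}\mc L\in\C$ by definition of $G(\C)$, and therefore $\{p,q\}\in E(\mc K\vee_{V(G)}\mc L)\subseteq E(G)$. Finally $\mc X\mapsto\widehat{\mc X}$ maps $V(\mathcal D)$ onto $V(\widehat{\mathcal D})$, so $|\widehat{\mathcal D}|=|V(\widehat{\mathcal D})|\le|V(\mathcal D)|=|\mathcal D|$. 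Taking $\mathcal D$ to be an OSJ cover of $G(\C)$ gives $\spr(G)\le|\widehat{\mathcal D}|\le|\mathcal D|=\spr(G(\C))$, which finishes the argument.

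I expect the only genuine obstacle to be the reverse inclusion in the claim that $\widehat{\mathcal D}$ covers $G$, i.e. checking that the push-forward introduces no spurious edges; this is exactly the place where one must use that in any set-join cover every edge of each constituent set-join is a true edge of the covered graph, applied here to $\mathcal D$ as a cover of $G(\C)$. The remaining points — nonemptiness of the $\widehat{\mc X}$, well-definedness and surjectivity of $\mc X\mapsto\widehat{\mc X}$, and handling of loops (the case $p=q$, resp.\ $\mc K=\mc L$) — are routine verifications against the definitions of $V(\C)$, $G(\C)$, and $|\C|$.
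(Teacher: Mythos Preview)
Your proof is correct. Both your argument and the paper's establish the nontrivial inequality $\spr(G(\C))\ge|\C|$ via $\spr(G)\le\spr(G(\C))$, but in different languages. The paper works on the matrix side: it picks $C\in\mc S^+(G(\C))$ with $\spr(C)=\spr(G(\C))$, factors $C=B_1C_1B_1^T$ with $C_1\in\mc S_k^+$, and observes that for $B=\iota(V(\C))^T$ one has $BCB^T\in\mc S^+(G)$, so $(BB_1)C_1(BB_1)^T$ witnesses $\spr(G)\le k=\spr(G(\C))$. Your push-forward $\mc X\mapsto\widehat{\mc X}=\bigcup_{\mc Y\in\mc X}\mc Y$ is exactly the combinatorial shadow of left-multiplying by the incidence matrix $B$, and your construction of $\widehat{\mathcal D}$ from $\mathcal D$ mirrors the product $BB_1$ at the level of patterns. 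So the two arguments are really the same under the dictionary of Theorem~\ref{th:main}; the paper's version is shorter because composing matrix factorizations handles both inclusions $E(G)\subseteq\cdots$ and $\cdots\subseteq E(G)$ at once, while you (correctly) check the ``no spurious edges'' direction by hand.
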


\begin{proof}
Let $\C$ be an OSJ cover of $G$, $C \in \mc S^+(G(\C))$ and $B = \iota(V(\C))^T$  the transposed incidence matrix of $V(\C)$. Then  $A = BCB^T \in \mc S^+(G)$. If $\spr(G(\C))<|\C|$, then the matrix $C \in \mc S^+(G(\C))$ can be chosen so that $\spr(C) < |\C|$. Hence, $C=B_1C_1B_1^T$ where $C_1 \in \mc S_k^+$ with $k< |\C|$. From $A = (BB_1)C_1(BB_1)^T$ we conclude $\spr(A) \le k < |\C| = \spr(G)$, a contradiction.
\end{proof}

\begin{remark}\label{rem:interpretation}
A set-join cover $\C$ of a graph can be interpreted in the following way. Consider a set of items $V$ that are either required or forbidden to interact. The interactions are organised by meetings of certain subgroups of $V$. If two subgroups $V_1,V_2 \subseteq V$ meet, then all the items from $V_1$ interact with all the items from $V_2$. Hence, if $i,j \in V$ are forbidden to interact, and $V_1$ and $V_2$ are two subgroups that meet, then $i \in V_1$ and $j \in V_2$ is not allowed. The desired interactions can clearly be organised by meetings of singletons, and we are asking what is the minimal number of subgroups that need to be formed, to be able to organise the desired interactions in such a way that no forbidden interactions occur. 

Let $G$ be a graph that records which interactions are required and which are forbidden: $V(G)=V$, $\{i,j\} \in E(G)$ if and only if $\{i,j\}$ are required to interact. Any set-join cover $\C$ of $G$ gives us possible way of organising required interactions, and $\spr(G)$ is the minimal number of groups that need to be formed. 
\end{remark}

We illustrate Theorem \ref{th:main} by the following example. 

\begin{example}\label{ex:K6}
Let $A=BCB^T$ with
$$B=\left(
\begin{array}{ccccc}
 1 & 0 & 1 & 0 & 0 \\
 1 & 0 & 0 & 1 & 0 \\
 1 & 0 & 0 & 0 & 1 \\
 0 & 1 & 1 & 0 & 0 \\
 0 & 1 & 0 & 1 & 0 \\
 0 & 1 & 0 & 0 & 1 \\
\end{array}
\right), \ \ \ C=\left(
\begin{array}{ccccc}
 0 & 1 & 0 & 0 & 0 \\
 1 & 0 & 0 & 0 & 0 \\
 0 & 0 & 0 & 1 & 1 \\
 0 & 0 & 1 & 0 & 1 \\
 0 & 0 & 1 & 1 & 0 \\
\end{array}
\right).$$
Then $G(A) = K_6$. This proves that $\spr(K_6) \le 5$. (In Section \ref{sec:complete} we will show that $\spr(K_6) = 5$.) The patterns of $B$ and $C$ determine the set-join cover of $K_6$:
$$ \mc K_1 = \{1, 2, 3\}, \mc K_2 = \{4, 5, 6\}, \mc K_3 = \{1, 4\}, \mc K_4 = \{2, 5\}, \mc K_5 = \{3, 6\},$$
and $\C=\{\mc K_1 \vee_{[6]} \mc K_2, \mc K_3 \vee_{[6]} \mc K_4, \mc K_4 \vee_{[6]} \mc K_5,\mc K_5 \vee_{[6]} \mc K_3\}$. Notice that $G(\C) = G(C) = K_2 \cup K_3$.
\end{example}

\begin{example} \label{ex:4.2}
Let $G$ be a graph with the adjacency matrix
$$ A= \left(
\begin{array}{ccccc}
 0 & 1 & 1 & 0 & 0 \\
 1 & 1 & 1 & 1 & 0 \\
 1 & 1 & 1 & 1 & 1 \\
 0 & 1 & 1 & 1 & 1 \\
 0 & 0 & 1 & 1 & 0 \\
\end{array}
\right) .$$
Then $\spr(G) = 4$. Indeed, any matrix $X$ with pattern matrix $\sign(X) = A$ has rank at least $4$. The first three rows of $X$ clearly have to be linearly independent, and the last row cannot be written as a linear combination of the first three. Since $\spr(X) \ge \rk(X)$ for any $X \in \mc S^+(G)$, we have $\spr(G) \ge 4$. Let 
$$ \mc K_1 = \{1, 2\}, \mc K_2 = \{2, 3\}, \mc K_3 = \{3, 4\}, \mc K_4 = \{4, 5\},$$
and observe that $\C=\{\mc K_1 \vee_{[5]} \mc K_2, \mc K_2 \vee_{[5]} \mc K_3,\mc K_3 \vee_{[5]} \mc K_4\}$ a set-join cover of $G$ with $|\C|=4$, proving $\spr(G) \le 4$.

Let $B^T = \iota({\bf R})$ be the incidence matrix of the list ${\bf R} = (\mc K_1, \mc K_2, \mc K_3, \mc K_4)$, $C$ the adjacency matrix of the path on 4 vertices, and $X=BCB^T$. Then $X \in \mc S^+(G)$ and 
$\spr(X) = 4$.
\end{example}

\begin{remark}
A matrix $A \in  \mc S_n^+$ is completely positive, if it can be written as $BB^T$ for some matrix $B \in \R_+^{n\times k}$. Such factorization is called CP-Factorization of $A$. The minimal possible $k$ in CP-Factorization as above is called the CP-rank of $A$, and is denoted by $\cp(A)$, see \cite{MR1986666}. 
The set $\mc S_n^+(G)$ contains matrices with CP-rank $k$ if and only if there exists a set-join cover of $G$ of the form $\C=\{\mc K_i \vee_{V(G)} \mc K_i, i=1\ldots,k\}$, since any CP-factorization $A=BB^T$ can be viewed as an SN-Trifactorization $A=BCB^T$, with the middle matrix $C$ equal to the identity.  So the graph $G(\C)$ is equal to $tK_1^\ell$.
Hence, all the edges of $G$ can be covered with $k$ complete graphs with all the loops. The lowest CP-rank that a nonnegative symmetric matrix with prescribed zero-nonzero pattern can have is equal to the clique cover number of the pattern graph.
\end{remark}

\begin{lemma}\label{lem:not-opt}
Let $\C$ be a set-join cover for a graph $G$ with $V(G)=[n]$. Assume there exist:
\begin{itemize}
\item $V' \subseteq V(\C)$ with $|V'|=t$, and
\item $V=\{\mc L_i \subseteq [n];\,   i \in [s]\}$,
\end{itemize}
so that $s<t$ and every element of $V'$ can be written as the union of some elements of $V$. Then $\C$ is not an OSJ cover of $G$.
\end{lemma}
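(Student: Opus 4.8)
The plan is to produce from $\C$ a strictly smaller set-join cover $\C'$ of $G$; since always $\spr(G)\le|\C'|$, this yields $\spr(G)<|\C|$ and hence $\C$ is not an OSJ cover. The idea is to \emph{refine} every component of $\C$ that lies in $V'$: by hypothesis each $\mc P\in V'$ can be written as $\mc P=\bigcup_{j\in J_{\mc P}}\mc L_j$ for some $J_{\mc P}\subseteq[s]$, and $J_{\mc P}$ may be taken nonempty because $\mc P$, being a component of a set-join cover, is a nonempty subset of $V(G)$. Components of $\C$ not lying in $V'$ are left unchanged.

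The mechanism is the elementary identity that if $\mc K=\bigcup_{j\in J}\mc P_j$ and $\mc L=\bigcup_{k\in K}\mc Q_k$ as subsets of $[n]$, then at the level of edge sets
$$E(\mc K\vee_{[n]}\mc L)=\bigcup_{j\in J,\,k\in K}E(\mc P_j\vee_{[n]}\mc Q_k),$$
which is immediate from the definition of the set-join. I would apply this to each $\mc K\vee_{[n]}\mc L\in\C$: replace the family $\{\mc K\}$ by $\{\mc L_j:j\in J_{\mc K}\}$ if $\mc K\in V'$ (and keep $\{\mc K\}$ otherwise), do the same for $\mc L$, and replace the single set-join by the collection of all set-joins obtained by pairing one set from the first family with one from the second. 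Taking the union over all members of $\C$ of these collections defines $\C'$.

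Two verifications remain. First, $\C'$ is a set-join cover of $G$: every new set-join has edge set contained in the edge set of the $\mc K\vee_{[n]}\mc L$ it came from, hence in $E(G)$, and by the displayed identity the edges contributed by the replacements of a fixed $\mc K\vee_{[n]}\mc L$ union to exactly $E(\mc K\vee_{[n]}\mc L)$, so $\bigcup_{\C'}E(\cdot)=\bigcup_{\C}E(\cdot)=E(G)$ (any empty $\mc L_j$ that happens to occur is simply discarded, contributing no edge). Second, the order: every component of $\C'$ is either a component of $\C$ outside $V'$ or one of $\mc L_1,\dots,\mc L_s$, so $V(\C')\subseteq(V(\C)\setminus V')\cup\{\mc L_1,\dots,\mc L_s\}$, whence
$$|\C'|=|V(\C')|\le\bigl(|V(\C)|-|V'|\bigr)+s=|\C|-t+s<|\C|.$$

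The only delicate point is this last count: one must check that refining the components in $V'$ does not introduce components outside $(V(\C)\setminus V')\cup V$ — which amounts to treating set-joins with exactly one endpoint in $V'$ correctly and observing that any coincidences (among the $\mc L_j$, or between an $\mc L_j$ and an untouched component of $\C$) only decrease $|\C'|$. Everything else is routine bookkeeping, and the strict inequality follows from the hypothesis $s<t$.
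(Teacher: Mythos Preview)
Your proposal is correct and follows essentially the same approach as the paper: both construct a smaller set-join cover with component set contained in $(V(\C)\setminus V')\cup V$ by replacing each component in $V'$ with the elements of $V$ that decompose it, and then expanding each affected set-join accordingly. The paper organizes the new cover as an explicit union of three families (according to whether zero, one, or two endpoints lie in $V'$), whereas you handle all cases uniformly via the edge-set identity $E(\mc K\vee_{[n]}\mc L)=\bigcup_{j,k}E(\mc P_j\vee_{[n]}\mc Q_k)$; the resulting count and conclusion are identical.
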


\begin{proof}
We will prove the lemma by constructing a set-join cover $\widehat\C$ of $G$ with $V(\widehat\C)=(V(\C)\setminus V')\cup V$. This will prove our claim as $|(V(\C)\setminus V')\cup V|\le|\C|-t+s<|\C|$. We define $\widehat\C$ as the union of the following three sets:
\begin{itemize}
\item $\C_1:=\{\mc K \vee_{[n]} \mc K';$  $\mc K \vee_{[n]} \mc K' \in \C$ and $\mc K, \mc K' \in (V(\C)\setminus V')\}$. 
\item  $\C_2:=\{\mc K \vee_{[n]} \mc L;$ $\mc K \in (V(\C)\setminus V')$, $\mc L \in V$ and there exists $\mc K \vee_{[n]} \mc K' \in \C$ with $\mc L \subseteq \mc K'\}$, 
\item  $\C_3:=\{\mc L \vee_{[n]} \mc L';$ $\mc L, \mc L' \in V$ and there exists $\mc K \vee_{[n]} \mc K' \in \C$ with $\mc L \subseteq \mc K$, and $\mc L' \subseteq \mc K'\}$. 
\end{itemize}
To prove that $\widehat \C$ is a set-join cover of $G$ we need to show that $\widehat\C$ covers all the edges of $G$, and that it does not cover any edges that are not in $G$. Since $\C$ is a cover of $G$, it is clear that $\C_1$ does not cover any edges that are not in $E(G)$. If $\mc K \in (V(\C)\setminus V')$, $\mc L \in V$ and there exists $\mc K \vee_{[n]} \mc K' \in \C$ with $\mc L \subseteq \mc K'$, then $E(\mc K \vee_{[n]} \mc L) \subseteq E(\mc K \vee_{[n]} \mc K')\subseteq E(G).$ Hence, $\C_2$ does not cover any edges not in $E(G)$. The claim for $\C_3$ is proved in a similar way. 

Now let $e \in E(G)$ and $\mc K \vee_{[n]} \mc K' \in \C$ with $e \in E(\mc K \vee_{[n]} \mc K')$. If $\mc K, \mc K' \in V(\C)\setminus V'$, then $\mc K \vee_{[n]} \mc K' \in \C_1$. If $\mc K \in V(\C)\setminus V'$ and $\mc K' \in V'$, then $\mc K'$ is the union of some elements form $V$, hence there exists $\mc L \in V$ so that $\mc L \subseteq \mc K'$ and $e \in \mc K \vee_{[n]} \mc L$, hence $e$ is covered by $\C_2$. The case when  $\mc K, \mc K' \in V'$ is proved in a similar way. 
\end{proof}

Let $\C$ be a set-join cover of a graph $G$ with the component set $V(\C)=\{\mc K_i, i \in [t]\}$. Recall that \emph{a system of distinct representatives} (SDR for short) of $V(\C)$ is a set $\{x_i, i \in [t]\}$ with the property that $x_i \in \mc K_i$  and $x_i$ are distinct, \cite[Section 1.2]{MR3469704}. 

\begin{lemma}\label{lem:SDR}
Let $\C$ be an optimal set-join cover of a graph $G$. Then $V(\C)$ has a system of distinct representatives.
\end{lemma}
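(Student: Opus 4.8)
The plan is to use Hall's marriage theorem. A system of distinct representatives for $V(\mathcal C) = \{\mathcal K_1, \ldots, \mathcal K_t\}$ exists if and only if Hall's condition holds: for every subcollection $I \subseteq [t]$ we have $\left|\bigcup_{i \in I} \mathcal K_i\right| \ge |I|$. So I would argue by contradiction: suppose Hall's condition fails, so there is an index set $I$ with $\left|\bigcup_{i \in I} \mathcal K_i\right| < |I|$. Write $V' = \{\mathcal K_i : i \in I\}$ and let $\mathcal W = \bigcup_{i \in I} \mathcal K_i \subseteq [n]$, so $|\mathcal W| < |I| = |V'|$ (note $|V'| = |I|$ since the $\mathcal K_i$ for $i \in I$ are distinct as elements of the component set).

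Next I would produce the family $V$ required by Lemma \ref{lem:not-opt}. The natural choice is to take $V$ to be the collection of singletons $\{\{w\} : w \in \mathcal W\}$; then $|V| = |\mathcal W| < |V'|$, and every $\mathcal K_i$ with $i \in I$, being a subset of $\mathcal W$, is a union of singletons drawn from $V$. This is exactly the hypothesis of Lemma \ref{lem:not-opt} with $s = |\mathcal W|$ and $t = |V'|$ satisfying $s < t$. Lemma \ref{lem:not-opt} then tells us that $\mathcal C$ is not an OSJ cover of $G$, contradicting optimality. One small point to check is the edge case where some $\mathcal K_i$ might be empty; but since $\mathcal C$ is a set-join cover whose components are required (by the definition of set-join) to be nonempty subsets, every $\mathcal K_i \in V(\mathcal C)$ is nonempty, so $\mathcal W \neq \emptyset$ whenever $I \neq \emptyset$, and the degenerate subfamily $I = \emptyset$ trivially satisfies Hall's condition.

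**The main obstacle** is essentially bookkeeping rather than a genuine difficulty: one has to be careful that the elements of $V(\mathcal C)$ are genuinely a set (so repeated set-join components are identified), which is what makes $|V'| = |I|$ potentially fail — in fact if two indices $i \neq j$ in $I$ give $\mathcal K_i = \mathcal K_j$, then $|V'| < |I|$ and the inequality I want becomes weaker, not stronger. To handle this cleanly I would phrase Hall's condition directly at the level of the set family $V(\mathcal C)$: an SDR of $V(\mathcal C)$ exists iff every subset $\mathcal U \subseteq V(\mathcal C)$ satisfies $\left|\bigcup_{\mathcal K \in \mathcal U} \mathcal K\right| \ge |\mathcal U|$. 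If this fails for some $\mathcal U$, set $V' = \mathcal U$, let $\mathcal W = \bigcup_{\mathcal K \in \mathcal U} \mathcal K$ with $|\mathcal W| < |\mathcal U| = |V'|$, take $V = \{\{w\} : w \in \mathcal W\}$, and apply Lemma \ref{lem:not-opt} to conclude $\mathcal C$ is not optimal — the desired contradiction. Thus $V(\mathcal C)$ has an SDR.
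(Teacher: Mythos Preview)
Your proof is correct and follows essentially the same approach as the paper: both arguments obtain a Hall violator $V'\subseteq V(\C)$ with $\left|\bigcup_{\mc K\in V'}\mc K\right|<|V'|$, take $V$ to be the set of singletons of that union, and invoke Lemma~\ref{lem:not-opt}. The only cosmetic difference is that the paper phrases the failure of Hall's condition via the permanent of the incidence matrix and the Frobenius--K\"onig theorem, whereas you invoke Hall's theorem directly.
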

\begin{proof}
It is well known that a family of subsets $V(\C)$ has an SDR if an only the permanent of the incidence matrix $\iota(V(\C))$ is not zero, \cite[Section 7.5]{MR3469704}.
Assume then that $\C$ is an optimal cover of $G$ with $\per(\iota(V(\C)))=0$. Since $\spr(G) \leq n$, this is equivalent to  $\iota(V(\C))$ having an $w \times t$ zero submatrix, where $w+t=n+1$, by Frobenius-K\"{o}nig Theorem, see for example \cite[Section 1.2]{MR3469704}. Hence, there exists $V'\subset V(\C)$ with $|V'|=t$ so that $|\cup_{\mc K_i \in V'} \mc K_i| \leq n-w=t-1$. Let $V = \{\{x\}, x\in \cup_{\mc K_i \in \mc V} \mc K_i\}$ be the set of singletons from $\cup_{\mc K_i \in V'} \mc K_i$. Then $V'$ and $V$ satisfy the conditions of Lemma \ref{lem:not-opt}, thus $\C$ is not optimal, a contradiction.
\end{proof}

\begin{theorem}\label{thm:subgraph}
Let $\C$ be an optimal set-join cover of a graph $G$. Then $G$ contains a subgraph that is isomorphic to $G(\C)$. 
\end{theorem}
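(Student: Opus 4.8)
The plan is to turn the system of distinct representatives supplied by Lemma \ref{lem:SDR} into an explicit embedding of $G(\C)$ into $G$. Write $V(\C)=\{\widehat{\mc L}_1,\ldots,\widehat{\mc L}_m\}$ with $m=|\C|=\spr(G)$, and recall that the vertices of $G(\C)$ are exactly these sets, with $\{\widehat{\mc L}_i,\widehat{\mc L}_j\}\in E(G(\C))$ (a loop when $i=j$) precisely when $\widehat{\mc L}_i\vee_{V(G)}\widehat{\mc L}_j\in\C$. Since $\spr(G)\le n=|V(G)|$, there is room for an injection $V(G(\C))\to V(G)$, and producing one that respects adjacency is the whole task.

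First I would invoke Lemma \ref{lem:SDR}: as $\C$ is optimal, $V(\C)$ has a system of distinct representatives, i.e.\ distinct vertices $x_1,\ldots,x_m\in V(G)$ with $x_i\in\widehat{\mc L}_i$. Define $\phi\colon V(G(\C))\to V(G)$ by $\phi(\widehat{\mc L}_i)=x_i$; this is injective because the $x_i$ are distinct. It remains to check that $\phi$ sends edges of $G(\C)$ to edges of $G$. Suppose $\{\widehat{\mc L}_i,\widehat{\mc L}_j\}\in E(G(\C))$, so $\widehat{\mc L}_i\vee_{V(G)}\widehat{\mc L}_j\in\C$. Since $x_i\in\widehat{\mc L}_i$ and $x_j\in\widehat{\mc L}_j$, the definition of the set-join gives $\{x_i,x_j\}\in E(\widehat{\mc L}_i\vee_{V(G)}\widehat{\mc L}_j)$; in the case $i=j$ this reads $\{x_i\}\in E(\widehat{\mc L}_i\vee_{V(G)}\widehat{\mc L}_i)$, which holds because $x_i\in\widehat{\mc L}_i\cap\widehat{\mc L}_i$. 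Because $\C$ is a set-join cover of $G$, every edge of every set-join in $\C$ lies in $E(G)$, hence $\{\phi(\widehat{\mc L}_i),\phi(\widehat{\mc L}_j)\}\in E(G)$. Thus the image of $G(\C)$ under $\phi$ is a subgraph of $G$ isomorphic to $G(\C)$.

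The argument is short because Lemma \ref{lem:SDR} already carries the real content; the only points needing care are the bookkeeping with loops — making sure the diagonal case $i=j$ is handled when $G(\C)$ has a loop on some $\widehat{\mc L}_i$ — and tracking that it is precisely the inequality $|V(G(\C))|=|\C|=\spr(G)\le n$ that makes an SDR, and therefore the required injection, available at all. I do not anticipate a genuine obstacle beyond stating these observations cleanly.
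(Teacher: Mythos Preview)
Your proof is correct and follows exactly the same approach as the paper: invoke Lemma \ref{lem:SDR} to obtain an SDR $\{x_i\}$ for $V(\C)$ and use it to embed $G(\C)$ as a (not necessarily induced) subgraph of $G$. The paper's version is terser, simply asserting that the SDR vertices carry the required subgraph, while you spell out the map $\phi$ and the edge-preservation check (including loops) explicitly.
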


\begin{proof}
Let $\C$ be an optimal cover for $G$ and $\mc S=\{x_i, i=1\ldots,|\C|\}$ an SDR for $V(\C)$  that exists by Lemma \ref{lem:SDR}. Clearly, there exists a (not necessarily induced) subgraph of $G$ on vertices from $\mc S$ that is isomorphic to $G(\C)$. 
\end{proof}

\begin{remark}
A graph property $P$ is \emph{monotone} if every subgraph of a graph with property $P$ also has property $P$.
Theorem \ref{thm:subgraph} shows that if $G$ has a monotone graph property and $\C$ is an OSJ cover of $G$, then $G(\C)$ has it also. For example:
\begin{itemize}
\item If $G$ is a forest, then $G(\C)$ is a forest. 
\item If $G$ is triangle free, then $G(\C)$ is also. 
\item If $G$ is bipartite, then $G(\C)$ is also. 
\end{itemize}
\end{remark}

\subsection{Uniqueness}

After we establish $\spr(G)$,  we can ask, if the set-join cover of $G$ of order $\spr(G)$ is unique. We will consider three different types of uniqueness as described in the definition below.  

\begin{definition}
A graph $G$ has \emph{unique optimal set-join cover} (\emph{unique OSJ cover} for short), if the OSJ cover of $G$ of order $\spr(G)$ is unique. 
A graph $G$ has \emph{essentially unique OSJ cover}, if for any two covers $\C$ and $\C'$ of $G$ satisfying $|\C|=|\C'|=\spr(G)$, there exists an automorphism $\sigma: V(G) \rightarrow V(G)$ of $G$ so that $\sigma(\C)=\C'$. 
(For a cover $\C=\{\mc K_i \vee_{V(G)} \mc L_i, i=1\ldots,\spr(G)\}$, we define $\sigma(\C)$ to be the cover $\{\sigma(\mc K_i)~\vee_{V(G)}~ \mc\sigma(\mc L_i), i=1\ldots,\spr(G)\}$.)

A graph $G$ has the \emph{unique OSJ cover graph}, if all covers $\C$ of $G$ of order $\spr(G)$ have the same $G(\C)$ up to isomorphism of graphs. 
\end{definition}

\begin{example}
Let $G_1 = (3K_1^{\ell}) \vee K_1^{\ell}$ be the star graph on $4$ vertices with all the loops, and denote the vertices of $3K_1^{\ell}$ by $1$, $2$, $3$ and the central vertex by $4$. Then $\spr(G_1)=3$ and $G_1$ has the unique OSJ cover $\C_1$, with the components: 
$$ \mc K_1 = \{1, 4\}, \mc K_2 = \{2, 4\}, \mc K_3 = \{3, 4\},$$
$\C_1=\{\mc K_1 \vee_{[4]} \mc K_1, \mc K_2 \vee_{[4]} \mc K_2,\mc K_3 \vee_{[4]} \mc K_3\}$ and $G(\C_1)=3K_1^{\ell}$.

Next, let $G_2$ be defined by $V(G_2) = [4]$ and adjacency matrix 
$$ A_2= \left(
\begin{array}{ccccc}
 1 & 1 & 1 & 1  \\
 1 & 1 & 0 & 1  \\
 1 & 0 & 0 & 1 \\
 1 & 1 & 1 & 0 \\
\end{array}
\right).$$
Again $\spr(G_2) = 3$, but $G_2$ does not even have the unique OSJ cover graph. Indeed, $G_2$ has two OSJ covers. Both covers have components 
$$ \mc K_1 = \{1, 2\}, \mc K_2 = \{1, 4\}, \mc K_3 = \{1, 2, 3\},$$
$\C_2=\{\mc K_1 \vee_{[4]} \mc K_1, \mc K_1 \vee_{[4]} \mc K_2, \mc K_2 \vee_{[4]} \mc K_3\}$, and
$\C_3=\{\mc K_1 \vee_{[4]} \mc K_1, \mc K_2 \vee_{[4]} \mc K_3\}$. Note that $G(\C_2)$ is a path $P_3$ with a loop on one pendant vertex, and
$G(\C_3)$ is $K_1^{\ell} \cup P_2$. 

Finally, let $G_3 = K_3 \vee K_1^{\ell}$ be a graph on $4$ vertices. Denote the vertices of $K_3$ by $1$, $2$, $3$ and the vertex with the loop by $4$. Again $\spr(G_3)=3$. This time $G_3$ has the unique OSJ cover graph $K_3$, but OSJ cover is not unique nor essentially unique. Let
$$ \mc K_1 = \{1, 4\}, \mc K_2 = \{2, 4\}, \mc K_3 = \{3, 4\}, \mc K'_3 = \{3\}.$$
The covers $\C_4=\{\mc K_1 \vee_{[4]} \mc K_2, \mc K_2 \vee_{[4]} \mc K_3,\mc K_3 \vee_{[4]} \mc K_1\}$ and  $\C_5=\{\mc K_1 \vee_{[4]} \mc K_2, \mc K_2 \vee_{[4]} \mc K'_3,\mc K'_3 \vee_{[4]} \mc K_1\}$ are not isomorphic since the cardinalities of their components do not match.  

In Example \ref{ex:K6unique} we will see that $K_6$ has essentially unique but not unique OSJ cover.
\end{example}

\subsection{Operations on graphs that preserve  $\spr(G)$}

Observations so far make it clear, that if $G$ has two vertices with the same set of neighbours, then removing one of those vertices will not change $\spr(G)$. This gives us an operation on graphs, whose effect on SNT-rank is easily understood. To make this observation more precise, we need the definition below. 

\begin{definition}
Let $G$ be a graph and $v \in V(G)$. Then 
$$N_G(v):=\{w; \{v,w\} \in E(G)\}$$
is called the \emph{neighbourhood} of $v$.  
\end{definition}

Note that $v \in N_G(v)$ precisely when $v$ has a loop in $G$. 

\begin{definition}
 Two vertices $v, w \in V(G)$ are called \emph{twins} if  $N_G(v)=N_G(w)$. A graph $G$ is \emph{twin-free}, if no two pairs of vertices in $V(G)$ are twins.  

By $F_{tw}(G)$ we denote the biggest twin free sub-graph of 
$G$. Note that   $F_{tw}(G)$  is obtained from $G$ by removing all but one vertex from every set of twins in $G$.  
\end{definition}

\begin{definition}
Let $A \in \mc S_n^+$. The graph $F_{tw}(G(A))$ is called \emph{the twin-free graph} of $A$.
\emph{The twin-free pattern matrix} of $A$, denoted by  $F_{tw}(\sign(A))$, is the adjacency matrix of $F_{tw}(G(A))$ and can be obtained from $\sign(A)$ by removing any duplicate rows and columns from $\sign(A)$.
\end{definition}

With this new terminology we can restate our earlier observation. 

\begin{proposition}\label{prop:twin}
Let $G$ be a graph. Then $\spr(G)=\spr(F_{tw}(G))$. Moreover, $G$ has unique OSJ cover if and only if $F_{tw}(G)$ has.
\end{proposition}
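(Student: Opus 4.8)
The plan is to prove the two assertions separately, though they share the same underlying mechanism: a bijection between set-join covers of $G$ and of $F_{tw}(G)$ that preserves order. Throughout, write $H = F_{tw}(G)$, and fix a vertex $v \in V(G) \setminus V(H)$ that is a twin of some $w \in V(H)$, so $N_G(v) = N_G(w)$; by induction on $|V(G)| - |V(H)|$ it suffices to handle the case where exactly one twin vertex $v$ is removed, i.e.\ $H = G - v$ and $N_G(v) = N_G(w)$ for some $w \neq v$.

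First I would establish $\spr(H) \le \spr(G)$. Given an OSJ cover $\C$ of $G$, restrict it to $\mc S = V(G) \setminus \{v\}$ using the restriction operation defined in the excerpt: $\C[\mc S]$ is a set-join cover of the induced subgraph $G[\mc S]$, and $G[\mc S] = H$ precisely because deleting a twin vertex does not alter adjacencies among the remaining vertices (this needs the small check that $\{w, w\} \in E(G) \iff v$ has a loop, handled by $N_G(v) = N_G(w)$, so no edge among $\mc S$ is lost). Since $|\C[\mc S]| \le |\C| = \spr(G)$, we get $\spr(H) \le \spr(G)$. For the reverse inequality $\spr(G) \le \spr(H)$, take an OSJ cover $\C'$ of $H$ and "reinsert" $v$: for every component $\mc K \in V(\C')$ with $w \in \mc K$, replace $\mc K$ by $\mc K \cup \{v\}$ throughout $\C'$ (in both the $\mc K_i$ and $\mc L_i$ slots). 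Call the result $\C$. One checks $\C$ is a set-join cover of $G$: the new edges it covers involving $v$ are exactly $\{v, j\}$ with $j$ adjacent (in $H$, hence in $G$) to $w$, together with the loop at $v$ iff $w$ had a loop — which is exactly $N_G(v) = N_G(w)$; and it covers no spurious edges for the same reason. Crucially $|V(\C)| \le |V(\C')|$ because we only enlarged existing component sets rather than adding new ones (a subtlety: two distinct components $\mc K \ne \mc L$ of $\C'$ could become equal after adding $v$, which only decreases the order). Combining, $\spr(G) = \spr(H)$.

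For the uniqueness statement, I would argue that the two constructions above — restriction to $V(G)\setminus\{v\}$ and reinsertion of $v$ at $w$ — are mutually inverse bijections between OSJ covers of $G$ and OSJ covers of $H$, once we know both graphs have the same SNT-rank. The forward direction (restriction) composed with reinsertion recovers the original cover because in an OSJ cover of $G$, a component either contains both $v$ and $w$ or neither: if some component $\mc K$ contained $w$ but not $v$, then edges $\{v, j\}$ for $j \in N_G(w)$ would have to be covered by other components, and one shows this forces a suboptimal cover (here is where one invokes that $v, w$ are twins, so any "reason" to separate them is spurious; more carefully, the map $\mc K \mapsto \mc K \cup \{v\}$ when $w \in \mc K$ turns any cover into one with $v,w$ always together without increasing order, so an optimal cover must already have this form). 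Given that, reinsertion after restriction is the identity, and restriction after reinsertion is visibly the identity, so OSJ covers of $G$ correspond bijectively to OSJ covers of $H$; in particular one is unique iff the other is.

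The main obstacle I anticipate is the claim that in an \emph{optimal} cover of $G$, the twin vertices $v$ and $w$ can be assumed to lie in exactly the same components — equivalently, that the reinsertion map hits every OSJ cover of $G$. This requires a genuine optimality argument rather than a formal manipulation: one must show that any OSJ cover in which $v$ and $w$ are "split" across components can be converted, without increasing the order, to one where they travel together, and then argue the split version was not optimal (or was isomorphic to the merged one). The cleanest route is probably to show directly that $\mc K \mapsto \mc K \cup \{v\}$ for all $\mc K \ni w$, applied to any cover $\C$ of $G$, yields a valid cover $\widehat{\C}$ of $G$ with $|\widehat{\C}| \le |\C|$ whose components all respect the $v$–$w$ symmetry, so every OSJ cover is equivalent to one of this special form, which then descends to $H$. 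Everything else is bookkeeping with the definitions of set-join, restriction, and $G(\C)$ already in place.
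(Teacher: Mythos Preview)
Your argument for $\spr(G)=\spr(F_{tw}(G))$ is correct and essentially matches the paper's: both use the restriction/reinsertion correspondence between covers of $G$ and of $H=G-v$.

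The gap is in the uniqueness direction ``$H$ unique $\Rightarrow$ $G$ unique''. You try to establish an outright bijection between OSJ covers of $G$ and of $H$, which requires the unconditional claim that in \emph{every} OSJ cover of $G$, a component contains $v$ if and only if it contains $w$. Your justification --- that the map $\mc K\mapsto \mc K\cup\{v\}$ (for $\mc K\ni w$) does not increase the order --- is a non sequitur: it shows only that some OSJ cover has the twins together, not that all do. If an OSJ cover $\C$ separates $v$ and $w$, the symmetrized cover $\widehat\C$ is optimal too, but this merely produces a second cover of $G$; it does not contradict the hypothesis that $H$ is unique, so you cannot close the argument this way. (Your last paragraph even concedes that ``every OSJ cover is equivalent to one of this special form'', which is weaker than what your bijection needs.)

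The paper supplies exactly the missing idea, and it \emph{uses} the uniqueness hypothesis on $H$ rather than avoiding it. Given an OSJ cover $\widehat\C$ of $G$ in which some component contains $v$ but not $w$, there are two natural restrictions to $H$: delete $v$ (and keep $w$), or delete $w$ and relabel $v$ as $w$. Because $\widehat\C$ treats $v$ and $w$ asymmetrically, these two procedures yield \emph{different} OSJ covers of $H$, contradicting uniqueness of $H$. Hence under the hypothesis that $H$ has a unique OSJ cover, every OSJ cover of $G$ must keep $v$ and $w$ together, and then your reinsertion map is indeed surjective. Without this two-restrictions trick, the implication ``$H$ unique $\Rightarrow$ $G$ unique'' is not established.
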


\begin{proof}
Let $G$ be a graph, $v \in V(G)$, and $\widehat G$ a graph obtained from $G$ by duplicating the vertex $v$ into vertices $v_1$ and $v_2$. Hence, $V(\widehat G)=(V(G)\setminus\{v\})\cup \{v_1,v_2\}$, and $N_{\widehat G}(v_i)$, $i=1,2$, is equal to $N_{G}(v)$ if $v \not\in N_{G}(v)$, and is equal to $(N_{G}(v)\setminus\{v\})\cup\{v_1,v_2\}$ otherwise. 

A set-join cover of $\widehat G$ can be obtained from a set-join cover $\C$ of $G$ by replacing $v$ in each of the components of $\C$ by $v_1$ and $v_2$. Note that two distinct set-join covers of $G$ result two distinct set-join covers of $\widehat G$ in this process. Hence, $\spr(G)\geq \spr(\widehat G)$ and unique OSJ cover of $\widehat G$ implies unique OSJ cover of $G$.

In order to prove that unique OSJ cover of $G$ also implies unique OSJ cover of $\widehat G$, let us assume that $G$ has unique OSJ cover, and let ${\widehat \C}$ be some OSJ cover of $\widehat G$.
We claim that for $\mc K \in V({\widehat \C})$, $v_1 \in \mc K$ if and only if $v_2 \in \mc K$. If this is not true, then one set-join cover $\C_1$ of $G$ can be obtained from ${\widehat \C}$ by removing $v_2$ and replacing $v_1$ in each of the components of ${\widehat \C}$ by $v$, and a different set-join cover $\C_2$ of $G$ can be obtained from ${\widehat \C}$ by removing $v_1$ and replacing $v_2$ in each of the components of ${\widehat \C}$ by $v$. In particular, this proves that every OSJ cover of $\widehat G$ can be obtained from some OSJ cover of $G$ by the process outlined above. Since the OSJ cover of $G$ is by our assumption unique this establishes the uniqueness of OSJ cover of $\widehat G$. 
\end{proof}

\begin{lemma}
If $\C$ is an optimal set-join cover for a graph $G$, then $G(\C)$ is twin-free. 
\end{lemma}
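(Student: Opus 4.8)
The plan is to argue by contradiction: suppose $\C$ is an OSJ cover of $G$ but $G(\C)$ has twins, say $\mc K, \mc K' \in V(\C)$ with $N_{G(\C)}(\mc K) = N_{G(\C)}(\mc K')$. The natural move is to delete the redundant vertex $\mc K'$ from $V(G(\C))$ and thereby shrink $V(\C)$, contradicting optimality via Lemma \ref{lem:not-opt}. The subtlety is that $V(\C)$ is a list of subsets of $V(G)$, not an abstract vertex set, so deleting $\mc K'$ from the graph $G(\C)$ must be translated into a modification of the set-join cover $\C$ itself that still covers exactly $E(G)$.

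First I would set $V' = \{\mc K'\}$ and look for a replacement. The twin condition says that for every $\mc L \in V(\C)$, $\mc K \vee_{V(G)} \mc L \in \C$ if and only if $\mc K' \vee_{V(G)} \mc L \in \C$ (and $\mc K \vee_{V(G)} \mc K \in \C$ iff $\mc K' \vee_{V(G)} \mc K' \in \C$, since a loop at $\mc K$ in $G(\C)$ corresponds to $\mc K \in N_{G(\C)}(\mc K) = N_{G(\C)}(\mc K')$, i.e. $\mc K' \in N_{G(\C)}(\mc K')$). Hence every set-join in $\C$ involving $\mc K'$ is "mirrored" by one involving $\mc K$ with the same second component. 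This strongly suggests that $\mc K' $ is redundant: every edge of $G$ covered by a set-join $\mc K' \vee_{V(G)} \mc L$ is also covered by $\mc K \vee_{V(G)} \mc L$ only if $\mc K' \subseteq \mc K$, which need not hold. So a direct deletion is not valid; instead I would invoke Lemma \ref{lem:not-opt} with $V' = \{\mc K'\}$ (so $t = 1$) and $V = \emptyset$ (so $s = 0$) — but $s < t$ requires $\mc K'$ to be expressible as a union of elements of the empty set, which is impossible unless $\mc K' = \emptyset$, which cannot happen for a component of a set-join cover.

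The resolution, and the step I expect to be the main obstacle, is to realize that twin-freeness of $G(\C)$ must really be defined with respect to the \emph{reduced} picture, and that the correct contradiction comes from building a smaller cover by hand: if $\mc K$ and $\mc K'$ are twins in $G(\C)$, I would replace $\mc K'$ everywhere in $\C$ by $\mc K$ — formally, form $\widehat\C = \{(\text{component of }\C\text{ with }\mc K'\text{ replaced by }\mc K)\}$. Because $\mc K, \mc K'$ are twins in $G(\C)$, no new adjacencies in $G(\C)$ are created, so $G(\widehat\C)$ is $G(\C)$ with $\mc K'$ deleted; but crucially $\widehat\C$ may no longer cover $E(G)$, since edges between $\mc K' \setminus \mc K$ and neighbours of $\mc K'$ are lost. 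To repair this I would first observe that, among all OSJ covers, we may choose one where no such lost-edge problem arises: specifically, if $\mc K, \mc K'$ are twins in $G(\C)$ and $\mc K' \not\subseteq \mc K$, then replacing $\mc K$ \emph{and} $\mc K'$ both by $\mc K \cup \mc K'$ produces a set-join cover of a graph containing $G$; restricting (via the restriction operation $\C[\mc S]$ introduced before Theorem \ref{th:main}) back down to $V(G)$ and using that $G(\C)$ is a forest / has no extra structure... Actually the cleanest route: apply Lemma \ref{lem:not-opt} directly with $V' = \{\mc K, \mc K'\}$, $t = 2$, and $V = \{\mc K \cup \mc K'\}$, $s = 1$; one checks every element of $V'$ (namely $\mc K$ and $\mc K'$) is a union of elements of $V$ only if $\mc K, \mc K' \subseteq \mc K\cup\mc K'$ — true — but "written as the union of some elements of $V$" means $\mc K = \mc K\cup\mc K'$, forcing $\mc K' \subseteq \mc K$. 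So in the case $\mc K' \subseteq \mc K$ we take $V' = \{\mc K'\}$, $V = \{\mc K\}$ (wait, $s=t=1$, not allowed).

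Given these difficulties, the honest plan is: show that if $G(\C)$ has twins $\mc K, \mc K'$, one can \emph{merge} them into the single subset $\mc K \cup \mc K'$ throughout $\C$ to get $\widehat\C$ with $|\widehat\C| \le |\C| - 1$; verify $\widehat\C$ still covers exactly $E(G)$ by checking that every edge formerly covered using $\mc K$ or $\mc K'$ is still covered (it is, since $\mc K, \mc K' \subseteq \mc K \cup \mc K'$) and no spurious edge is introduced (here one uses that $\mc K$ and $\mc K'$ have the same $G(\C)$-neighbourhood, so the set-joins $(\mc K\cup\mc K')\vee_{V(G)}\mc L$ appearing in $\widehat\C$ are exactly those $\mc L$ adjacent to $\mc K$ in $G(\C)$, and $E((\mc K\cup\mc K')\vee_{V(G)}\mc L) = E(\mc K\vee_{V(G)}\mc L)\cup E(\mc K'\vee_{V(G)}\mc L) \subseteq E(G)$, with the loop case handled analogously). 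This contradicts $|\C| = \spr(G)$. The main obstacle is precisely the "no spurious edges" verification in the loop and self-join cases, i.e. controlling $E((\mc K\cup\mc K')\vee_{V(G)}(\mc K\cup\mc K'))$, which requires knowing that $\mc K \vee_{V(G)} \mc K'$ is already in $\C$ whenever $\mc K$ has a loop in $G(\C)$ — which follows from the twin hypothesis as noted above.
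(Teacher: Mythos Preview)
Your final approach---merging the twin components $\mc K$ and $\mc K'$ into $\mc K \cup \mc K'$ throughout the cover and verifying that the result still covers exactly $E(G)$---is correct, and your handling of the loop/self-join case via the twin hypothesis is sound. The detours through Lemma~\ref{lem:not-opt} were dead ends, as you yourself discovered: that lemma requires each element of $V'$ to be a union of elements of $V$, which fails for the configurations you tried.

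The paper's proof, however, takes a different and much shorter route: it simply combines Proposition~\ref{prop:OSJgraph} (an OSJ cover $\C$ satisfies $\spr(G(\C)) = |V(G(\C))|$) with Proposition~\ref{prop:twin} ($\spr(H) = \spr(F_{tw}(H))$ for any graph $H$). If $G(\C)$ had twins, then $\spr(G(\C)) = \spr(F_{tw}(G(\C))) \le |V(F_{tw}(G(\C)))| < |V(G(\C))|$, contradicting Proposition~\ref{prop:OSJgraph}. Your hands-on merging construction is in fact exactly the content of the Remark the paper places immediately after this lemma as an alternative justification; so while your route is valid, the paper exploits already-established machinery for a one-line proof, whereas your argument makes the mechanism explicit at greater length (and after several unnecessary false starts).
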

 
\begin{proof} 
The assertion follows directly from Propositions \ref{prop:OSJgraph} and \ref{prop:twin}.
\end{proof}

\begin{remark}
Suppose that $G(\C)$ contains twin vertices $\mc K_1$ and $\mc K_2$. Let $\mc K_3 = \mc K_1 \cup \mc K_2$ and 
\begin{align*}
\C_1 &= \{\mc K \vee_{V(G)} \mc K'; \mc K, \mc K' \in (V(\C)\setminus\{\mc K_1,\mc K_2\}), \mc K \vee_{V(G)} \mc K' \in \C\}, \\  
\C_2 &= \{\mc K_3 \vee_{V(G)} \mc K'; \mc K' \in (V(\C)\setminus\{\mc K_1,\mc K_2\}), \mc K_1 \vee_{V(G)} \mc K' \in \C\}.  \\
\C_3 &= \{\mc K_3 \vee_{V(G)} \mc K_3\}.  
\end{align*}
If $\mc K_1 \vee_{V(G)} \mc K_2 \in \C$ let $\widehat\C = \C_1 \cup \C_2 \cup \C_3$, otherwise let $\widehat\C = \C_1 \cup \C_2$.
It is straightforward to check that $\widehat\C$ is set-join cover of $G$ with $V(\widehat\C) = (V(\C)\setminus\{\mc K_1,\mc K_2\})\cup\{\mc K_3\}$, so $\C$ is not an optimal set-join cover of $G$.    
\end{remark}

\begin{proposition}\label{prop:vK1}
Let $G$ be a graph and let $\widehat{G}=G\vee K_1^{\ell}$. Then:
 $$\spr(\widehat{G})=
    \begin{cases}
    \spr(G); & \text{ if } |N_G(v)| \geq 1 \text{ for all } v \in V(G),\\ 
    \spr(H)+2; & \text{ if }  G=H\cup t K_1.
    \end{cases}
    $$
Moreover, if $G$ is a graph without isolated vertices, then $G$ has unique OSJ cover graph if and only if $\widehat{G}$ has one. 
\end{proposition}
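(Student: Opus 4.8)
The plan is to prove the two cases of the displayed formula separately, in each case establishing ``$\le$'' and ``$\ge$'' by directly manipulating set-join covers (using Theorem~\ref{th:main}), and then to upgrade these manipulations to a correspondence of OSJ covers for the ``moreover'' part. Write $v$ for the looped vertex of $K_1^{\ell}$, so $V(\widehat G)=V(G)\cup\{v\}$ and $E(\widehat G)=E(G)\cup\{\{v,w\}:w\in V(G)\}\cup\{\{v\}\}$.

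The inequality $\spr(\widehat G)\ge\spr(G)$ holds in all cases: $G$ is the induced subgraph of $\widehat G$ on $V(G)$, so restricting a set-join cover $\C$ of $\widehat G$ to $V(G)$ yields a set-join cover of $G$ with $|\C[V(G)]|\le|\C|$; in the second case this gives $\spr(\widehat G)\ge\spr(H)$ via $\spr(G)=\spr(H)$ (Proposition~\ref{prop:union}). For the reverse inequality when $G$ has no isolated vertices, I would take an OSJ cover $\C$ of $G$ and replace each $\mc K\in V(\C)$ by $\mc K\cup\{v\}$: since no vertex of $G$ is isolated every vertex lies in some component, so the new family covers every $\{v,w\}$ and the loop $\{v\}$, adds no forbidden edge, and (as $v\notin V(G)$) has the same order, giving $\spr(\widehat G)=\spr(G)$. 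In the second case I would instead adjoin to an OSJ cover of $H$ the single set-join $\{v\}\vee_{V(\widehat G)}V(\widehat G)$, which covers exactly the $v$-incident edges of $\widehat G$ and adds the two new components $\{v\}$ and $V(\widehat G)$; this yields $\spr(\widehat G)\le\spr(H)+2$.

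The lower bound $\spr(\widehat G)\ge\spr(H)+2$ in the second case is the crux, and the step I expect to be the main obstacle. Let $\C$ be any set-join cover of $\widehat G$, let $\mc T=\{v,u_1,\dots,u_t\}$ be $v$ together with the $t\ge1$ isolated vertices of $G$, and split $V(\C)=\mc A\cup\mc B$ according to whether a component meets $V(H)$ or is contained in $\mc T$. Restriction to $V(H)$ gives $|\mc A|\ge|V(\C[V(H)])|\ge\spr(H)$, so it suffices to show that $|\mc B|$ plus the ``collapse defect'' $|\mc A|-|V(\C[V(H)])|$ is at least $2$. The governing observation is $N_{\widehat G}(u_i)=\{v\}$: any component containing some $u_i$ can be set-joined in $\C$ only with the component $\{v\}$. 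Covering $\{v,u_1\}$ therefore forces $\{v\}\in V(\C)$, so $\{v\}\in\mc B$. Taking a component $\mc K\ni u_1$: either $\mc K\subseteq\mc T$, and then $\mc K$ is a second element of $\mc B$; or $\mc K\in\mc A$, in which case its only partner in $\C$ is $\{v\}$, so $\mc K$ occurs in no set-join surviving the restriction to $V(H)$ and hence contributes $1$ to the collapse defect. Either way the quantity is at least $2$, which together with the upper bound gives equality.

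For the ``moreover'' statement ($G$ without isolated vertices, so $\spr(\widehat G)=\spr(G)$), I would promote the two constructions above to maps between the OSJ covers of $G$ and those of $\widehat G$ that preserve the order and preserve $G(\C)$ up to isomorphism. The assignment $\mc K\mapsto\mc K\cup\{v\}$ sends an OSJ cover $\C$ of $G$ to one of $\widehat G$ with graph isomorphic to $G(\C)$. Conversely, if $\C''$ is an OSJ cover of $\widehat G$ then $\C''[V(G)]$ is a set-join cover of $G$ of order at most $\spr(\widehat G)=\spr(G)$, hence optimal; equality of orders forces $\mc K\mapsto\mc K\cap V(G)$ to be injective on $V(\C'')$ with every component occurring in a surviving set-join, and this injectivity gives $G(\C''[V(G)])\cong G(\C'')$ (loops included). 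Thus $\{G(\C):\C\text{ an OSJ cover}\}$ is, up to isomorphism, the same family for $G$ and for $\widehat G$, so one of them has a single isomorphism type exactly when the other does. The degenerate cases $V(G)=\emptyset$ (and $H=\emptyset$ in the second case) I would settle by hand, or by assuming $G$ nonempty.
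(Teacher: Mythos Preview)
Your proposal is correct and follows essentially the same route as the paper: the add-$v$/remove-$v$ maps between set-join covers of $G$ and $\widehat G$, the observation that $\{v\}$ must appear as a component whenever $G$ has isolated vertices (and never appears in an OSJ cover when it has none), and restriction to $V(H)$ for the lower bound. Your ``collapse defect'' bookkeeping for the bound $\spr(\widehat G)\ge\spr(H)+2$ is slightly more explicit than the paper's version, which simply deletes all set-joins involving $\{v\}$ and notes this removes at least two components from $V(\C)$; otherwise the arguments coincide.
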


\begin{proof}
Since any $A \in \mc S_+(G \vee K_1^{\ell})$ has  $A_1 \in \mc S_+(G)$ as a principal sub-matrix $\spr(G) \leq \spr(G \vee K_1^{\ell})$ holds.

Assume that $G$ has no isolated vertices. Let $V(G)=[n]$ and $V(\widehat{G})=[n+1]$. Replacing every component $\mc K $ in a set-join cover of $G$ by $\widehat{\mc K}:= \mc K\cup \{n+1\}$ produces a set-join cover of $\widehat{G}$. Conversely, replacing every component $\mc K$ in a set-join cover of  $\widehat{G}$ by $\mc K':= \mc K\setminus \{n+1\}$ produces a set-join cover of $G$. Observe that $\{n+1\}$ is not a component of any OSJ cover of $\widehat{G}$. Indeed, if a cover $\widehat{\C}$ of  $\widehat{G}$ contains $\{n+1\}$ as a component, then we can construct a set join cover $\widehat{\C}'$ of  $\widehat{G}$ satisfying $|\widehat{\C}'|=|\widehat{\C}|-1$   by  removing any set-joins involving $\{n+1\}$ and adding $n+1$ to all other components of $\C$. Since the process outlined above preserves the order and the graphs of the corresponding OSJ covers, the statement for graphs without isolated vertices follows. (We note in passing that $G$ may have unique OSJ cover, but $\widehat{G}$ doesn't, since the vertex $n+1$ is not necessarily contained in all the components of OSJ cover of $G \vee K_1^{\ell}$.)

Now, let $G=H \cup t K_1$, where we assume that $H$ does not contain any isolated vertices (without loops). Let $V(G)=V(H) \cup \mc I=[n]$, and $V(\widehat{G})=[n+1]$ as before.  Defining $\widehat{\mc K}$ as above, produces from a set-join cover $\C$ of $G$ a set-join cover $\widehat{\C}$ of $(G \vee K_1^{\ell}) \cup t K_1$, since the elements of $\mc I$ are not contained in any component in this construction.
Adding $\mc I \vee_{[n+1]}\{n+1\}$ to $\widehat{\C}$ results in a set-join cover of $\widehat G$, and introduces $2$ new components. We conclude that $\spr(\widehat{G}) \leq \spr(H)+2$. 

Finally, let ${\C'}$ be an OSJ cover  of $\widehat{G}$. Since the elements of $\mc I$ are connected only to $\{n+1\}$ in $\widehat{G}$, we observe that ${\C'}$ has to contain $\{\mc K_0 \vee_{[n+1]} \{n+1\}\}$, where $\mc I \subseteq \mc K_0$, and $\mc K_0$ is not a component of any other set-join in ${\C'}$. Let $\C''$ be obtained from $\C'$ by removing all set-joins with a component $\{n+1\}$. Note that $|\C''| \leq |\C'|-2$, since $V({\C''})$ does not contain $\{n+1\}$ nor $\mc K_0$. Replacing every $\mc K \vee_{V(\widehat G)}\mc L \in \C''$ by $\mc K' \vee_{V(H)}\mc L'$, where $\mc K' :=\mc K\setminus \{n+1\}$  as above, gives us a set-join cover of $H$, proving   $\spr(\widehat{G})=\spr(H)+2$. 
\end{proof}

\begin{remark}
Note that $G = K_2 \cup K_1$ has unique OSJ cover graph, but $\widehat{G}=G\vee K_1^{\ell}$ doesn't.  
\end{remark}

From Propositions \ref{prop:union}, \ref{prop:twin} and \ref{prop:vK1} we see that in order to understand SNT-rank of graphs, we can from now on consider only connected twin-free graphs that are not of the form $G \vee K_1^{\ell}$.

\begin{example}
Threshold graphs is a family of graphs (without loops) that can be constructed by repeating two operations, adding an isolated vertex ($G\cup K_1$), and joining a vertex ($G \vee K_1$). Here we extend this definition to \emph{threshold graphs with loops} to be all graphs that can be constructed by repeating the following two operations on $G$:
\begin{itemize}
    \item $G \cup K_1$
    \item $G \vee K_1^{\ell}$
\end{itemize}
To obtain a twin free graph the two operations have to alternate. Considering only connected twin-free graphs resulting from this process, we get the following sequence of graphs: $$T_1:=K_1 \vee K_1^{\ell},\,  T_{i+1}:=(T_i \cup K_1)\vee K_1^{\ell}.$$  We have $\spr(T_i)=2i$ by Proposition \ref{prop:vK1}. This is not surprising, as it is not difficult to see that every matrix $A \in \mc S^+(T_i)$ has $\rk(A)=2i$. \end{example}

\begin{remark}
Let $G$ be a graph with a cut edge $\{u_1,u_2\}$, so that $G$ with this edge removed is equal to $G_1 \cup G_2$, where $u_i \in V(G_i)$, $i=1,2$. Then:
$$\spr(G_1)+\spr(G_2) \leq \spr(G) \leq \spr(G_1)+\spr(G_2)+2.$$
 Moreover, $\spr(G) = \spr(G_1)+\spr(G_2)$ if and only if for $i=1,2$ there exist OSJ covers $\C_i$ with $\{u_i\} \in V(\C_i)$, and $\spr(G) = \spr(G_1)+\spr(G_2)+1$ if such cover exists for either $i=1$ or $i=2$, but not both.   
\end{remark}

\section{Trees and cycles without loops}\label{sec:without-loops}

Let $G$ be a graph without loops that does not contain any four cycles. Then a set-join cover of $G$ can contain only elements of the form $\mc K \vee_{V(G)} \{v\}$ for some $v \in V(G)$, since all set-joins $\mc K \vee_{V(G)} \mc L$ with $|\mc K| \geq 2$ and $|\mc L|\geq 2$ contain a four cycle. Hence, for graphs without loops and four cycles, a set-join cover is equivalent to an edge star cover, as defined below. 

\begin{definition}
Let $G$ be a simple graph without loops. A family of simple stars $\{S_1, S_2, ...,$ $S_k\}$ is \emph{an edge star cover} of $G$ if $E(G) = \cup_{i=1}^k E(S_i)$. The \emph{edge star cover number} $\starc(G)$ of $G$ is the minimal number of stars in any edge star cover of $G$. 
\end{definition}

Clearly, $\spr(G)\leq 2 \starc(G)$ for any graph $G$, and we will show that this is an equality for trees. For any tree $T$ it is known that  $\rk(A)$ equals twice the matching number of $T$ for all $A \in \mc S^+(T)$, \cite{MR1861120}.
Since for a tree $T$ the matching number equals $\starc(T)$, the inequality $\spr(T)\geq 2 \starc(T)$ clearly holds. In this work we use the edge star cover number
due to its immediate connection to set-join covers. We summarise this observation in the next proposition. 

\begin{proposition}\label{th:tree}
Let $T$ be a forest (without loops) and $A \in \mc S^+(T)$. Then 
$\rk(A)=\spr(A)=
2 \starc(T)$. In particular, $\spr(T)=2 \starc(T)$.
\end{proposition}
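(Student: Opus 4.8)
The plan is to establish the chain of (in)equalities $\rk(A) = \spr(A) = 2\starc(T)$ for every $A \in \mc S^+(T)$, from which $\spr(T) = 2\starc(T)$ follows by taking the minimum over $A \in \mc S^+(T)$. I will assemble the proof from four ingredients, three of which are essentially cited facts and one of which is the structural observation preceding the proposition.

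First I would record the general inequalities. For any $A \in \mc S^+(T)$ we always have $\spr(A) \ge \rk(A)$, since if $A = BCB^T$ with $C \in \mc S_k^+$ then $\rk(A) \le \rk(C) \le k$. Next, for a forest $T$ with adjacency/pattern matrix, every $A \in \mc S^+(T)$ satisfies $\rk(A) = 2\,\mathrm{m}(T)$, where $\mathrm{m}(T)$ is the matching number, by the cited result \cite{MR1861120} (the rank of a symmetric matrix whose graph is a forest is twice the matching number, independent of the nonzero values). Then I would note the elementary graph-theoretic identity $\mathrm{m}(T) = \starc(T)$ for a forest: a maximum matching gives a family of single-edge stars covering... no, rather, one shows both inequalities — any edge star cover must use at least $\mathrm{m}(T)$ stars because each star, being a forest itself, contains at most one edge of a fixed maximum matching (a star has matching number $1$), so the stars in a cover of $E(T)$ induce a partition of the $\mathrm{m}(T)$ matching edges into distinct stars; conversely a maximum matching together with enough extra single-edge stars, or more cleanly a greedy decomposition of the forest into $\mathrm{m}(T)$ stars centered at the matched vertices, realizes $\starc(T) \le \mathrm{m}(T)$. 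This gives $\rk(A) = 2\starc(T)$.

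It remains to bound $\spr(A) \le 2\starc(T)$, and this is where the structural remark just before the proposition does the work: a forest contains no $C_4$, so by the displayed observation any set-join cover of $T$ consists of set-joins of the form $\mc K \vee_{V(T)} \{v\}$, i.e. is equivalent to an edge star cover, and an edge star cover $\{S_1,\dots,S_k\}$ yields a set-join cover $\{\mc K_i \vee_{V(T)} \{v_i\}\}$ where $v_i$ is the center of $S_i$ and $\mc K_i$ its set of leaves; this set-join cover has order at most $2k$. Taking $k = \starc(T)$ and invoking Theorem \ref{th:main} gives $\spr(T) \le 2\starc(T)$, hence also $\spr(A) \le \spr(T) \le 2\starc(T)$ — wait, that last step only bounds $\spr(T)$, not $\spr(A)$ for a fixed $A$; to get $\spr(A) = 2\starc(T)$ for the given matrix I instead combine $\spr(A) \ge \rk(A) = 2\starc(T)$ with the fact that $\spr(A) \le 2\starc(T)$ can fail for a badly-chosen $A$... so actually the clean statement is: $\spr(A) \ge \rk(A) = 2\starc(T)$ always, and equality $\spr(A) = 2\starc(T)$ need not hold for every $A$ — but the proposition as stated claims it does. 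I expect the resolution is that $\rk(A) = \spr(A)$ for forest patterns because a rank-$r$ factorization $A = UV^T$ with $\sign$ constraints can be symmetrized, or more simply because Example \ref{ex:4.2}-style constructions show the star cover bound is met; the main obstacle is precisely verifying $\spr(A) \le 2\starc(T)$ for an arbitrary $A \in \mc S^+(T)$ rather than just for one well-chosen matrix.

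The hard part, then, will be the upper bound $\spr(A) \le 2\starc(T)$ for every $A \in \mc S^+(T)$, not merely $\spr(T) \le 2\starc(T)$. I would handle it by taking an edge star cover $\{S_1,\dots,S_k\}$ of $T$ with $k = \starc(T)$ and explicitly building, for the given $A$, matrices $B \in \R_+^{n \times 2k}$ and $C \in \mc S_{2k}^+$ with $A = BCB^T$: group the $2k$ columns of $B$ in pairs, the $i$-th pair being the (scaled) indicator of $\{v_i\}$ and of the leaf-set of $S_i$, and let $C$ be block-diagonal with $2\times 2$ blocks $\begin{pmatrix} 0 & 1 \\ 1 & 0\end{pmatrix}$ (up to positive scaling), choosing the scalings so that each edge $\{v_i,u\}$ of $S_i$ receives exactly the weight $a_{v_i u}$; this works because the stars partition — or can be refined to partition — the edge set, so no edge gets a contribution from two blocks, and the diagonal is zero since $T$ has no loops. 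Verifying that this reproduces $A$ exactly (all prescribed positive entries and all prescribed zeros) is the routine-but-essential computation, and it closes the circle $2\starc(T) = \rk(A) \le \spr(A) \le 2\starc(T)$.
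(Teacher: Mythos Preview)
Your approach matches the paper's: combine the cited fact that $\rk(A) = 2\,m(T)$ for every $A \in \mc S^+(T)$, the identity $m(T) = \starc(T)$ for forests, and the star-cover upper bound on $\spr$. You correctly flag a point the paper glosses over: the discussion preceding the proposition only yields $\spr(T) \le 2\starc(T)$ (a statement about the graph parameter, via Theorem~\ref{th:main}), whereas the proposition asserts $\spr(A) = 2\starc(T)$ for \emph{every} $A$. Your explicit block factorization in the final paragraph --- taking a minimum star cover that (after refinement) partitions $E(T)$ and writing each star's contribution as $\mathbf{e}_{v_i}\mathbf{w}_i^T + \mathbf{w}_i\mathbf{e}_{v_i}^T$ with $(\mathbf{w}_i)_u = a_{v_i u}$ --- is exactly what is needed to close that gap, and it is correct (the absence of loops guarantees $v_i \notin \mathrm{supp}(\mathbf{w}_i)$, so no spurious diagonal entries appear). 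This is a detail the paper leaves implicit.
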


In the next lemma we see that if a graph $G$ contains a leaf, then any set-join cover has to contain at least one element of the form $\mc K \vee \{v\}$ for some $v \in V(G)$, i.e. it contains at least one star.

\begin{lemma}\label{lem:leaf}
Let $G$ be a graph and let $L(G)\subset V(G)$ be the set of all leaves without a loop in $G$. Let $\ell \in L(G)$, $w \in V(G)$ its unique neighbour, and $G'$ the graph obtained from $G$ by deleting all edges $\{w,v'\}$ with $v' \in N_G(w)$ (and all singletons that result after this deletion). Then $\spr(G)=\spr(G')+2$ and for any optimal set-join cover $\C$ of $G$, we have $\{w\}\vee_{V(G)} \mathcal N \in \C$, where $N_G(w)\cap L(G) \subseteq \mathcal N \subseteq N_G(w)$. 
\end{lemma}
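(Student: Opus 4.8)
The plan is to analyze how any set-join cover of $G$ must handle the leaf $\ell$ and its unique neighbour $w$, and then to show the remaining edges — those incident to $w$ — can be stripped off into a single star, leaving a cover of $G'$, with a matching lower bound argument.

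First I would establish the structural claim that any set-join cover $\C$ of $G$ contains a component of the form $\{w\} \vee_{V(G)} \mathcal N$ with $N_G(w) \cap L(G) \subseteq \mathcal N \subseteq N_G(w)$. The edge $\{\ell, w\}$ must be covered by some $\mc K \vee_{V(G)} \mc L \in \C$ with (say) $\ell \in \mc K$, $w \in \mc L$. Since $\ell$ is a leaf without a loop, $N_G(\ell) = \{w\}$, so every vertex of $\mc K$ must be adjacent to every vertex of $\mc L$; in particular every vertex of $\mc K$ is adjacent to $w$, and $\ell$ is adjacent to every vertex of $\mc L$, forcing $\mc L = \{w\}$ (as $\ell$'s only neighbour is $w$, and $\ell \notin \mc L$ since there is no loop on $\ell$ — here one must be careful that $\ell$ could in principle lie in $\mc L$ only if $\{\ell,\ell\} \in E(G)$, which is excluded). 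So $\{\ell\} \subseteq \mc K$, $\mc K \subseteq N_G(w)$, and $\mc K \vee_{V(G)} \{w\}$ covers only edges incident to $w$, all of which lie in $E(G)$. Now I would argue that $\mc K$ must contain every leaf-neighbour of $w$: for $\ell' \in N_G(w) \cap L(G)$ with $\ell' \neq \ell$, the edge $\{\ell', w\}$ is covered by some component, and the same argument shows that component is $\mc K' \vee_{V(G)} \{w\}$ with $\ell' \in \mc K' \subseteq N_G(w)$; replacing both $\mc K$ and $\mc K'$ by their union $\mc K \cup \mc K'$ (which is still $\subseteq N_G(w)$, hence the associated star still covers only valid edges) and using Lemma~\ref{lem:not-opt} shows that an optimal cover must already have consolidated all such stars into one. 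So in an OSJ cover there is a single component $\{w\}$, paired only with one set $\mathcal N$ satisfying $N_G(w)\cap L(G)\subseteq \mathcal N\subseteq N_G(w)$, giving the desired component $\{w\}\vee_{V(G)}\mathcal N$.

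Next, for the rank equality $\spr(G) = \spr(G')+2$: for the upper bound, take an OSJ cover $\C'$ of $G'$ and add to it the two new components $\{w\}$ and $\mathcal N := N_G(w)$ together with the single set-join $\{w\} \vee_{V(G)} \mathcal N$; this covers exactly the edges we deleted, and since $G'$ together with these edges is $G$, we get a set-join cover of $G$ of order at most $\spr(G') + 2$. (A small check: the vertices that become isolated in $G'$ reappear correctly here — components of $\C'$ live on $V(G')$ but extend harmlessly to $V(G)$.) For the lower bound, take an OSJ cover $\C$ of $G$; by the structural claim it contains $\{w\} \vee_{V(G)} \mathcal N$. Remove every set-join in $\C$ having $\{w\}$ as a component, then delete $w$ from all remaining components. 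The resulting family is a set-join cover of $G'$: every edge of $G'$ is disjoint from the $w$-incident edges, so it was covered by a component not equal to $\{w\}\vee_{V(G)}\mc L$, and deleting $w$ from the two constituent sets does not lose that edge. We must verify this drops the order by at least $2$: the set $\{w\}$ is removed from $V(\C)$, and the set $\mathcal N$ — being paired only with $\{w\}$ — is also removed; these are two distinct elements of $V(\C)$ (as $w \notin \mathcal N$, there being no loop on $w$ once we are in the reduced situation... one should double check $w\notin N_G(w)$, which holds since removing edges at $w$ presupposes $w$ has no loop in the relevant part, but more carefully: if $w$ had a loop the argument still works with minor bookkeeping), and any new sets created by deleting $w$ only decrease cardinality. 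Hence $\spr(G') \le |\C| - 2 = \spr(G) - 2$, completing the equality.

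The main obstacle I anticipate is the bookkeeping around which of $\mc K, \mc L$ contains $\ell$ versus $w$ and the edge cases: whether $w$ has a loop, whether $\mathcal N = N_G(w)$ exactly or is a proper subset (the statement allows either), and ensuring that when we "delete $w$ from all components" and "remove components equal to $\{w\}$" we genuinely lose two distinct generators of $V(\C)$ rather than one. The cleanest way to handle the consolidation-of-stars step is to invoke Lemma~\ref{lem:not-opt} with $V' = \{$all the various $\mc K'$ paired with $\{w\}\}$ and $V = \{N_G(w)\cap L(G)\text{ enlarged appropriately}\}$, but one has to phrase this so that the single replacement set still yields only legal edges — which it does precisely because any subset of $N_G(w)$ joined to $\{w\}$ covers only edges of $G$. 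I would present the structural claim first as it is the engine for both bounds, and then the two inequalities as short consequences.
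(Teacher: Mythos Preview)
Your approach is essentially the paper's, and the core argument is sound, but two points need correction.

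First, Lemma~\ref{lem:not-opt} does not apply to the consolidation step. That lemma replaces a family $V'\subseteq V(\C)$ by a smaller family $V$ in which each member of $V'$ is a \emph{union} of members of $V$; you are going the other way, merging $\mc K$ and $\mc K'$ into the single larger set $\mc K\cup\mc K'$. The direct argument you sketch is nonetheless valid without the lemma: since $\ell\in\mc K$ and $\ell'\in\mc K'$ are leaves with unique neighbour $w$, each of $\mc K,\mc K'$ can be paired only with $\{w\}$ in $\C$, so swapping the two set-joins $\{w\}\vee\mc K$ and $\{w\}\vee\mc K'$ for $\{w\}\vee(\mc K\cup\mc K')$ drops $|V(\C)|$ by one. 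This is exactly what the paper does (using $N_G(w)$ in place of $\mc K\cup\mc K'$).

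Second, the assertion that in an OSJ cover ``$\{w\}$ is paired only with one set $\mc N$'' is false. Take $V(G)=\{w,\ell,u,v,x\}$ with edges $\{w,\ell\},\{w,u\},\{w,v\},\{u,x\},\{v,x\}$. Then $\C=\{\{w\}\vee\{\ell\},\ \{w\}\vee\{u,v\},\ \{x\}\vee\{u,v\}\}$ has order $4=\spr(G)$, yet $\{w\}$ is paired with both $\{\ell\}$ and $\{u,v\}$. Fortunately your lower-bound argument does not use this overclaim: you remove \emph{every} set-join with component $\{w\}$ and then delete $w$ from the remaining components, and you only need that $\{w\}$ and the particular $\mc N\ni\ell$ both leave $V(\C)$. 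They are distinct simply because $\ell\in\mc N$ and $\ell\neq w$, which also disposes of your worry about the case $w\in N_G(w)$.

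On the comparison: the paper's lower bound removes only the single set-join $\{w\}\vee\mc N$ and asserts that the remainder is a set-join cover of $G'\cup tK_1$. This can fail, since a surviving set-join such as $\{w,u\}\vee\{v\}$ may still cover an edge $\{w,v\}\notin E(G')$. Your extra step of excising $w$ from every remaining component is what makes the reduction to $G'$ airtight, so in this respect your version is the cleaner one.
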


\begin{proof}
Let $\C$ be an optimal  set-join cover of $G$. Since $N_G(\ell)=\{w\}$, there exists $\mc N \subseteq N_G(w)$ with $\ell \in \mc N$, so that $\{w\} \vee_{V(G)} \mc N \in \C$. If there exists $\ell' \in (N_G(w)\cap L(G))\setminus \mathcal{N}$, then we also have $\{w\} \vee_{V(G)} \mc N' \in \C$ for some $\mc N'$ with $\ell' \in \mc N'$. Since $\C_1:=(\C\cup(\{w\}\vee N_G(w)))\setminus \{\{w\} \vee_{V(G)} \mc N,\{w\} \vee_{V(G)} \mc N'\}$ is a set-join cover of $G$ with $|\C_1|\leq |\C|-1$, we get a contradiction with the assumption that $\C$ is optimal. 
Hence, $N_G(w)\cap L(G) \subseteq \mathcal N$. 

 Observe that $\C':=\C\setminus \{\{w\} \vee_{V(G)} \mc N\}$ is a set-join cover for $G' \cup tK_1$ for some $t \in \mathbb{N}$. Since, $\spr(G')=\spr(G' \cup tK_1)$, this implies $\spr(G')\leq \spr(G)+2$. 
On the other hand, $\C=\{\{w\} \vee_{V(G)}N_G(w)\} \cup \C'$ is a set-join cover of $G$  for any set-join cover $\C'$ of $G'$. Hence, $\spr(G)\leq \spr(G')+2$, as desired.
\end{proof}

\begin{example}\label{ex:paths}
For paths we have  $\spr(P_{2k})=\spr(P_{2k+1})=2k$ by inductive application of Lemma \ref{lem:leaf}.
\end{example}

In the result below we resolve the question of uniqueness of OSJ covers  for trees without loops. 

\begin{theorem}\label{thm:Tcover}
Let $T$ be a tree with $|V(T)| \geq 3$. Then $T$ has unique OSJ cover if and only if the distance between any two leaves in $T$ is even.
\end{theorem}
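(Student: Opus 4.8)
The plan is to analyze an arbitrary OSJ cover $\C$ of $T$ using Lemma~\ref{lem:leaf} as the main engine, peeling off leaves one at a time, and to track exactly how much freedom remains at each step. By Lemma~\ref{lem:leaf}, if $\ell$ is a leaf with neighbour $w$, every OSJ cover $\C$ of $T$ contains a star $\{w\}\vee_{V(T)}\mc N$ with $N_T(w)\cap L(T)\subseteq \mc N\subseteq N_T(w)$, and removing it yields an OSJ cover of $T'$ (the tree obtained by deleting all edges at $w$). So the only choice made in this peeling step is which non-leaf neighbours of $w$ to include in $\mc N$. The first step is therefore to understand when that choice is forced: $\mc N$ is uniquely determined (equal to $N_T(w)$) precisely when we are \emph{compelled} to put each non-leaf neighbour $v$ of $w$ into the star at $w$, which happens when $v$ itself must later be covered by a star centred at $v$ — equivalently, when $v$ is "on a short side" in a sense governed by the leaf-distance parity.

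Concretely, I would set up the following dichotomy and prove it by induction on $|V(T)|$. \textbf{Parity invariant.} Root $T$ conceptually at a leaf; for a vertex $v$, call $v$ a \emph{centre vertex} of a cover if some $\mc K\vee_{V(T)}\{v\}$ (equivalently a star centred at $v$) belongs to the cover. I claim: every OSJ cover of $T$ uses, as centre vertices, exactly one endpoint of each edge of a fixed maximum matching $M$ of $T$; moreover $\starc(T)=|M|=$ the matching number (this is already implicit via Proposition~\ref{th:tree}). The leaf-peeling of Lemma~\ref{lem:leaf} forces $w$ to be a centre vertex and matches $w$ to one of its leaf neighbours. Uniqueness of the OSJ cover then amounts to: (i) the maximum matching is unique in the relevant sense at every peeling stage, and (ii) at each stage the star $\mc N$ at $w$ is forced, i.e. $\mc N=N_T(w)$ necessarily. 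I would show (i) and (ii) are simultaneously equivalent to the all-even-leaf-distance condition.

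For the forward direction (unique OSJ cover $\Rightarrow$ all leaf distances even), I argue contrapositively: suppose two leaves $\ell_1,\ell_2$ are at odd distance. Then along the $\ell_1$–$\ell_2$ path there is a vertex where the peeling can "break ties" two ways — formally, there is an edge $e$ on the path and a maximum matching $M$ containing $e$ but also a maximum matching $M'$ obtained by shifting along the path, yielding two different forced centre-vertex assignments, hence two distinct OSJ covers. (A clean way: the odd path $P_{2k+1}$ between the two leaves already has two distinct OSJ covers, cf.\ Example~\ref{ex:paths} where $\spr(P_{2k+1})=\spr(P_{2k})$, and this non-uniqueness propagates up to $T$ because the choices at the endpoints of that path are independent of the rest.) For the reverse direction (all leaf distances even $\Rightarrow$ unique OSJ cover), I use the induction: pick a leaf $\ell$ with neighbour $w$. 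Every leaf neighbour of $w$ is at distance $2$ from every other leaf neighbour (even, fine), and crucially the all-even condition forces that no non-leaf neighbour $v$ of $w$ can be a centre vertex "competing" with $w$ — because if it could, we could route a path from $\ell$ through $w,v$ to some leaf beyond $v$ of odd length. Hence $\mc N=N_T(w)$ is forced, $\C$ must contain $\{w\}\vee_{V(T)}N_T(w)$, and $\C\setminus\{\{w\}\vee_{V(T)}N_T(w)\}$ is an OSJ cover of $T'$. One checks $T'$ (a forest of smaller trees, modulo isolated vertices removed via Proposition~\ref{prop:union}) still has all pairwise leaf distances even in each component, so by induction its OSJ cover is unique, and therefore so is $\C$.

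\textbf{Main obstacle.} The delicate point is the bookkeeping in the reverse direction: after deleting the edges at $w$, new leaves are created (the former non-leaf neighbours of $w$, and possibly vertices further along), and one must verify that the all-even-leaf-distance property is inherited by every component of $T'$. This requires a careful parity argument relating distances in $T'$ to distances in $T$ through the deleted star — distances between two surviving leaves either are unchanged, or one leaf is a newly-exposed vertex $v\in N_T(w)$, in which case the new distance is the old $T$-distance from $v$ to the other leaf, which stays even. I would isolate this as a small combinatorial sub-lemma about trees and parities of leaf distances, and it is there that the real content of the theorem lives; the rest is a routine induction wrapped around Lemma~\ref{lem:leaf}.
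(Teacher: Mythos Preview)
Your contrapositive argument for ``unique OSJ cover $\Rightarrow$ all leaf distances even'' is essentially the paper's: on an odd $\ell_1$--$\ell_2$ path one locates two adjacent centre vertices and shifts the shared edge from one star to the other to produce a second OSJ cover. (Your phrasing via ``two maximum matchings'' and ``propagation from $P_{2k+1}$ to $T$'' is vaguer than the paper's direct construction on $T$ itself, but the underlying idea is the same.)

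For the converse the paper does \emph{not} induct. It defines $V_0$ to be the set of vertices at odd distance from every leaf (equivalently, the bipartition class not containing the leaves), notes that $\C_0=\{\{v\}\vee N_T(v):v\in V_0\}$ is an OSJ cover, and then argues globally: for any OSJ cover $\C'$ with centre set $V'$ one has $|\C'|=2|V'|=2|V_0|$, and $V'$ is necessarily a vertex cover of $T$ of this minimum size; under the all-even hypothesis $V_0$ is the unique such set, so $V'=V_0$. Since $V_0$ is independent, each star is forced to equal the full neighbourhood, and uniqueness follows in one stroke.

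Your inductive route can in principle be made to work, but the step you present as settled is in fact the gap. Your justification for ``$\mc N=N_T(w)$ is forced'' is wrong as stated: you claim that if a non-leaf neighbour $v$ of $w$ were a centre, one could ``route a path from $\ell$ through $w,v$ to some leaf beyond $v$ of odd length''. But under the all-even hypothesis every leaf $\ell'$ in the subtree past $v$ satisfies $d_T(\ell,\ell')=2+d_T(v,\ell')$, so $d_T(v,\ell')$ is \emph{even} and no odd path arises --- parity alone does not prevent $v$ from being a centre. What actually forces $v\notin V'$ is the global counting/vertex-cover argument above (or, if you insist on induction, a separate verification that $\spr$ of the forest left after removing only the edges from $w$ to $\mc N$ strictly exceeds $\spr(T')$ whenever $\mc N\subsetneq N_T(w)$, which is not immediate and essentially reproves the global fact locally). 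The ``main obstacle'' you flagged --- that each component of $T'$ inherits the all-even property --- is real but routine; the genuine obstacle is the step before it.
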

 
\begin{proof}
Assume the distance between any two leaves in $T$ is even, and let $V_0$ be the set of vertices in $T$ at an odd distance to all leaves. An inductive application of Lemma \ref{lem:leaf} shows that
 $$\C_{0}:=\{\{v\} \vee N_T(v); v \in V_0 \}$$ is an OSJ cover of $T$. Let $\C'$ be an OSJ cover of $T$ and $V'$ the set of all central vertices of stars in $\C'$. Then $|\C'|=2|V'|=2|V_0|$. Since for any $v \in V(T)\setminus V'$ there exists $w \in V'\cap N_T(v)$, we necessarily have $V'=V_0$.  Since no two vertices in $V_0$ are connected in $T$, $\C_{0}$ is the only collection of stars with central vertices $V_{0}$ that covers $T$. We conclude that the cover is unique. We further remark, that the unique OSJ cover graph of $T$ is $|V_0|P_2$. 

Assume now, that there are two leaves $\ell_1$ and $\ell_2$ in a tree $T$ at odd distance $d$, $d \geq 3$. Observe that in any set-join cover $\C$ of $T$ at least every second vertex of the path between $\ell_1$ and $\ell_2$ is a central vertex of a star in that cover, hence a component in $\C$. Since $d$ is odd, we have $\{v_1\}\vee \mc S_1$ and $\{v_2\}\vee \mc S_2$ in $\C$, such that $v_1$ and $v_2$ are neighbours in $T$, and at least one of the inclusions $v_1 \in \mc S_2$ and $v_2 \in \mc S_1$ holds. Assume now that $\C$ is an OSJ cover, and without loss of generality that $v_1 \in \mc S_2$. Then we get a different OSJ cover of $T$ by replacing $\{v_1\}\vee \mc S_1$ and $\{v_2\}\vee \mc S_2$ by $\{v_1\}\vee (\mc S_1\cup \{v_2\})$ and $\{v_2\}\vee (\mc S_2\setminus\{v_1\})$.
\end{proof}

\begin{example}
Note that the only tree with $|V(T)| =2$ is a path $P_2$ and it has the unique OSJ cover. Moreover, a path $P_n$ has unique OSJ cover if and only if $n$ is odd or $n=2$, see Theorem \ref{thm:Tcover}.
\end{example}

For integers $t\geq 3$ and $k_i \geq 1$, $i \in [t]$, we denote by $\mathrm{star}(k_1, k_2, ..., k_t)$ the graph with the central vertex $v$ and $t$ arms, where each arm is a path on $k_i + 1$ vertices and one of its end-vertices is $v$. Hence,  $\mathrm{star}(k_1, k_2, ..., k_t)$ is a generalised star with $\sum_{i=1}^t k_i + 1$ vertices. If $k_i = 1$ for all $i$, then $\mathrm{star}(1,1, ..., 1)=K_{1,t}$ is a star graph with $t$ leaves. 

\begin{example}\label{pr:star}
Let $G=\mathrm{star}(k_1, k_2, ..., k_t)$, $t\geq 3$,  be a generalised star. Then  
$$\spr(G)=
    \begin{cases}
    |V(G)| + 1 - |\{k_i; k_i \text{ odd}\}|; & \text{if at least one } k_i \text{ is odd},\\ 
    |V(G)| - 1; & \text{if all } k_i \text{ are even}.
    \end{cases}
    $$

We will prove this claim by induction on $$\psi(G):=\sum_{i=1}^t k_i-t.$$ We first assume $\psi(G)=0$. This implies $G = G_0(t):=\mathrm{star}(1,1,\ldots,1)$, i.e. a star with $t$ leaves. Clearly, $\spr(G_0(t))=2$, and the claim holds. For  $\psi(G)=1$, we have $G = G_1(t):=\mathrm{star}(2,1,\ldots,1)$, i.e. a generalised star with $t$ arms. Applying Lemma  \ref{lem:leaf} for the leaf on the long arm, we get: 
$$\spr(G_1(3))=\spr(P_3)+2=4$$ 
by Example \ref{ex:paths}, 
and for $t>3$:
$$\spr(G_1(t))=\spr(G_0(t-1))+2=4.$$ 
Since $G_1(t)$ has $t+2$ vertices and $t-1$ arms of odd length, this establishes the base of induction. 

Now assume the claim holds for all generalised stars $G'=\mathrm{star}(k_1', k_2', ..., k_{t'}')$ with $\psi(G')\leq \psi$, and let $G:=\mathrm{star}(k_1, k_2, ..., k_t)$ with $\psi(G)=\psi+1\geq 2$. Without loss of generality we can assume $k_1\geq 2$. By Lemma \ref{lem:leaf} we have $\spr(G)=\spr(G')+2$, where:
\begin{itemize}
\item $G':=\mathrm{star}(k_2,\ldots,k_t)$ with $\psi(G')=\psi(G)-1\geq 1$ for $k_1=2$,
\item $G':=\mathrm{star}(k_1-2,k_2,\ldots,k_t)$  with $\psi(G')=\psi(G)-2$ for $k_1 \geq 3$. 
\end{itemize}
If $k_1=2$ and $t=3$, then $G'=P_{k_2+k_t+1}$, and the claim holds for $G$  by Example \ref{ex:paths}. In all other cases $G'$ is again a generalised star, so we can use the induction hypothesis. Noticing that $V(G)-V(G')=2$ and that $G$ and  $G'$ have same number of arms of odd length, we establish the claim.

The graph $G$ has unique OSJ cover if and only if all $k_i$'s are odd or all even by Theorem \ref{thm:Tcover}.
\end{example}

A vertex $v$ is a \emph{cut} vertex of a connected graph $G$, if by removing $v$ and all edges $\{v,w\}$, $w \in N_G(v)$, from $G$ we get a disconnected graph that we will denote by $G\setminus \{v\}$. (We allow for a cut vertex to have a loop.) 

\begin{proposition}\label{pr:cut}
Let $v$ be a cut vertex of a connected graph $G$, where $k \geq 2$ and $G\setminus\{v\}=\cup_{i=1}^k G_i$ for some connected graphs $G_i$. Then $\spr(G) \leq 2+ \sum_{i=1}^k \spr(G_i)$. 
\end{proposition}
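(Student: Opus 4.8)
The plan is to build a set-join cover of $G$ from optimal set-join covers of the components $G_i$ by adding just two extra components that handle all edges incident to the cut vertex $v$. Let $V(G_i)$ be the vertex sets of the components, so that $V(G) = \{v\} \cup \bigcup_{i=1}^k V(G_i)$, and note that $N_G(v) \subseteq \{v\} \cup \bigcup_{i=1}^k V(G_i)$. For each $i$ fix an OSJ cover $\C_i$ of $G_i$, so $|\C_i| = \spr(G_i)$. Since the vertex sets $V(G_i)$ are pairwise disjoint and each $\C_i$ involves only vertices of $G_i$, the union $\bigcup_{i=1}^k \C_i$ is a set-join cover of $\bigcup_{i=1}^k G_i$ (a disjoint union of graphs), using at most $\sum_{i=1}^k \spr(G_i)$ components; this covers exactly the edges of $G$ that do not touch $v$.

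Next I would cover the remaining edges, namely those of the form $\{v,w\}$ with $w \in N_G(v)$, together with a possible loop at $v$. The idea is to use the single component $\{v\}$ and the single component $\mc N := N_G(v)$ (or $\mc N := N_G(v) \cup \{v\}$ to be safe, but $N_G(v)$ already contains $v$ precisely when $v$ has a loop, so $N_G(v)$ suffices). Add the set-join $\{v\} \vee_{V(G)} \mc N$. Its edges are exactly $\{\{v,w\}; w \in N_G(v)\}$, which is precisely the set of edges of $G$ incident to $v$ (including the loop at $v$ if present). Thus $\widehat{\C} := \big(\bigcup_{i=1}^k \C_i\big) \cup \{\{v\} \vee_{V(G)} \mc N\}$ has edge set equal to $E(G)$, so it is a set-join cover of $G$. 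Its component set is contained in $\big(\bigcup_{i=1}^k V(\C_i)\big) \cup \{\{v\}, \mc N\}$, which has size at most $\sum_{i=1}^k \spr(G_i) + 2$. By Theorem \ref{th:main}, $\spr(G) \le |\widehat{\C}| \le 2 + \sum_{i=1}^k \spr(G_i)$.

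The only subtlety to check is that $\widehat{\C}$ does not cover any edge not in $E(G)$: the components appearing in $\C_i$ are subsets of $V(G_i)$, so a set-join between two of them has all its edges inside $G_i \subseteq G$; the new set-join $\{v\} \vee_{V(G)} \mc N$ contributes only edges incident to $v$, all of which lie in $G$ by the definition of $N_G(v)$. There are no cross-component edges between distinct $G_i$'s because no component of $\C_i$ meets $V(G_j)$ for $j \ne i$. Hence no extraneous edges arise, and the bound follows. I do not anticipate a genuine obstacle here; the one point requiring mild care is bookkeeping the component set so that $\{v\}$ and $\mc N$ are counted as only two new elements regardless of overlaps with the $V(\C_i)$, which only helps the inequality.
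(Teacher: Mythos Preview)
Your proof is correct and follows essentially the same approach as the paper: take optimal set-join covers $\C_i$ of the $G_i$, add the single set-join $\{v\}\vee_{V(G)} N_G(v)$, and count components. The paper's proof is just a terser version of what you wrote; your additional checks (no extraneous edges, the loop case, possible overlap of components) are all sound and only make the argument more explicit.
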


\begin{proof}
Let $\C_i$ be a set-join cover of $G_i$, $i \in [k]$. Then $$\C := \cup_{i=1}^k \C_i \cup \{\{v\} \vee_{V(G)} N_G(v)\}$$ is a set-join cover of $G$. Since $|\C| = 2 + \sum_{i=1}^k |\C_i|$, the claim follows. 
\end{proof}

\begin{example}
The inequality of Proposition \ref{pr:cut} is not always an equality, but it cannot be improved in general.
\begin{enumerate}
\item Let $G=\mathrm{star}(2,2,2)$. Then $\spr(G) = 6$ by Proposition \ref{pr:star} and $G\setminus\{v\}=3P_2$, so $2+ \sum_{i=1}^k \spr(G_i) = 8$.
\item Let $G$ be a star graph $G=\mathrm{star}(3,3,3)$. Then $\spr(G) = 8$ by Proposition \ref{pr:star} and $G\setminus\{v\}=3P_3$, so $2+ \sum_{i=1}^k \spr(G_i) = 8$. 
\end{enumerate}
\end{example}

We now consider SNT-rank of cycles.

\begin{proposition}\label{prop:cycle}
Let $n \ge 3, n\ne 4$. Then $\spr(C_n)=n$. Furthermore,  $\spr(C_4)=2$. 
\end{proposition}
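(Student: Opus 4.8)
The plan is to handle the case $C_4 = K_{2,2}$ separately from the rest, and within the main case to prove matching lower and upper bounds $\spr(C_n) = n$. For the upper bound, I would exhibit an explicit set-join cover of order $n$: take the $n$ vertices of the cycle to be $[n]$ with edges $\{i,i+1\}$ (indices mod $n$), and for each vertex $i$ let $\mc K_i = \{i-1, i, i+1\}$ — no wait, I would be more careful, since for a cycle without loops and without $4$-cycles (which is the case for $n \neq 4$) every set-join component must be a singleton by the discussion opening Section \ref{sec:without-loops}. So instead I would use the edge-star cover viewpoint: the cover $\C = \{\{i\} \vee_{[n]} \{i, i+1\}: i \in S\}$ for a suitable set $S$ of alternating ``center'' vertices. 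Concretely, for $n$ even one can cover all edges with $n/2$ stars $\{2j-1\} \vee_{[n]} \{2j-2, 2j-1, 2j\}$, giving $|\C| = n$; for $n$ odd, $\lceil n/2 \rceil$ stars suffice with an analogous alternating choice but one ``long'' star, still giving component set of size exactly $n$ after accounting for overlaps. I would simply verify $|V(\C)| = n$ directly in each parity case.

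For the lower bound $\spr(C_n) \geq n$, I expect two viable routes. The first is a rank argument: by Theorem \ref{th:main} it suffices to show every $A \in \mc S^+(C_n)$ has $\spr(A) \geq n$, and since $\spr(A) \geq \rk(A)$, it is enough to show some (equivalently, by a genericity argument, a generic) nonnegative symmetric matrix with the cycle pattern has full rank $n$ — the adjacency-type matrix of $C_n$ with generic positive weights on the diagonal-adjacent band is nonsingular, which is a standard tridiagonal-plus-corner determinant computation. But this only gives $\rk(A) = n$ for suitable $A$, and one must still rule out that the \emph{minimum} over $\mc S^+(C_n)$ drops below $n$; here I would instead argue combinatorially. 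Using the edge-star-cover equivalence, $\spr(C_n) = 2\,\starc(C_n)$ would be the cleanest statement if it held, but $\starc(C_n) = \lceil n/2 \rceil$, giving $2\lceil n/2 \rceil$, which equals $n$ for $n$ even but overshoots for $n$ odd. So for $n$ odd the star cover is \emph{not} the whole story: components of size $1$ can be reused. The right bound comes from counting: in a set-join cover of $C_n$ by singletons, each component $\{v\}$ appears in $G(\C)$, and by Proposition \ref{prop:OSJgraph} $G(\C)$ is a graph with $\spr(G(\C)) = |V(G(\C))| = |\C|$; moreover by Theorem \ref{thm:subgraph} $G(\C)$ embeds as a subgraph of $C_n$, and by the remark following it $G(\C)$ inherits being triangle-free and having all the monotone properties of $C_n$.

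The key step — and the main obstacle — is pinning down the lower bound for odd $n$, where I would argue as follows: if $\C$ is an OSJ cover, it consists of stars $\{v\} \vee_{[n]} \mc N_v$ with $\mc N_v \subseteq N_{C_n}(v)$ (using the no-$4$-cycle reduction), and the centers must form a dominating set of the edges, i.e. every edge has an endpoint that is a center. The component set $V(\C)$ then contains all the singletons $\{v\}$ for $v$ a center, \emph{and} the sets $\mc N_v$; but each $\mc N_v$ is a subset of a size-$2$ set $N_{C_n}(v) = \{v-1, v+1\}$, and to minimize $|V(\C)|$ one wants these $\mc N_v$ to coincide with singletons already present or with each other. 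A careful accounting — essentially, that the centers alone number at least $\lceil n/2 \rceil$, and the ``boundary'' sets $\mc N_v$ contribute enough additional distinct sets to reach $n$ total, with equality forced — should close the gap; I would formalize this via Lemma \ref{lem:not-opt} (if too few distinct sets were needed, one could write $V(\C)$ as unions of fewer singletons, contradicting optimality) together with the SDR/Frobenius–König argument of Lemma \ref{lem:SDR}. Finally, for $C_4$: since $C_4 = K_{2,2}$ is bipartite with parts $\{1,3\},\{2,4\}$, the single set-join $\{1,3\} \vee_{[4]} \{2,4\}$ covers all four edges and has order $2$, and $\spr(C_4) \geq 2$ is immediate as $C_4$ has an edge (so $\spr \geq \rk \geq 2$, or just that one component cannot produce a non-loop edge among distinct vertices unless... ) — I would note $\spr(G) \geq 2$ for any graph with at least one edge, giving $\spr(C_4) = 2$.
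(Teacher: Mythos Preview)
Your handling of $C_4$ is fine, and your upper bound is correct but needlessly complicated: for any graph $G$ the cover by singletons shows $\spr(G)\le |V(G)|$, so $\spr(C_n)\le n$ is immediate.

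The genuine gap is the lower bound for $n\ge 3$, $n\ne 4$. You correctly observe (via the no-$4$-cycle discussion) that every set-join in a cover of $C_n$ has the form $\{v\}\vee_{[n]}\mc N$ with $\mc N\subseteq N_{C_n}(v)$, but then you cycle through three approaches without completing any of them. The rank route you rightly abandon. The identity $\spr=2\,\starc$ does \emph{not} hold here (it holds for trees via Proposition~\ref{th:tree}, but for odd cycles $2\lceil n/2\rceil=n+1$ overshoots, as you note). Your final ``careful accounting'' of centers and neighbourhood sets is left at the level of ``should close the gap'', and the invocation of Lemmas~\ref{lem:not-opt} and~\ref{lem:SDR} is not tied to any concrete contradiction. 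As written, the lower bound is simply not proved. (A small slip: it is not true that every \emph{component} must be a singleton; rather, in each set-join one of the two components is a singleton.)

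The paper's argument avoids all of this counting. For $n\ge 5$ it splits into two cases: if every component is a singleton then $|V(\C)|=n$ trivially; otherwise some component is $\mc K_1=N_{C_n}(v)=\{v-1,v+1\}$, and the only set-join it can participate in is $\{v\}\vee_{[n]}\mc K_1$. Deleting that set-join and restricting away from $v$ yields a set-join cover of $P_{n-1}$ of order at most $|\C|-2$, whence $|\C|\ge 2+\spr(P_{n-1})\ge 2+(n-2)=n$ by Example~\ref{ex:paths}. The case $n=3$ is handled by a one-line determinant/rank argument. This reduction to $P_{n-1}$ is the key idea you are missing; it replaces the delicate bookkeeping you were attempting by a single appeal to the already-computed path value.
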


\begin{proof}
If $n = 3$, then the determinant of any matrix $A \in \mc S^+(C_3)$  is positive, so $\rk(A) = 3$, thus $\spr(C_3) = 3$. If $n = 4$,  then 
$$ \C = \{\mc \{1,3\} \vee_{[4]} \mc \{2,4\}\}$$
is a set-join cover of $C_4$, so $\spr(C_4)=2$. Now let $n \ge 5$ and let $\C$ be a set-join cover of $C_n$. If  every component in $\C$ is a singleton, then $|\C| = n$. If $\C$ contains a component $\mc K_1$, which is not a singleton, then $\mc K_1 = N_{C_n}(v)$ for some $v\in V(C_n)$ and $\{v\} \vee_{[n]} \mc K_1 \in \C$. Since $n \ge 5$ the components $\{v\}$ and $\mc K_1$ don't appear anywhere else in $\C$. This implies that $\C \setminus \{\{v\} \vee_{[n]} \mc K_1\}$ is a set-join cover of $P_{n-1}$, so $|\C| \ge 2 + \spr(P_{n-1}) \ge n$, proving $\spr(C_n)=n$.
\end{proof}

\begin{remark}
Let $G$ be an  unicyclic graph, i.e. connected simple graph without loops containing exactly one cycle. Then $G$ is either a cycle or it contains at least one leaf. We can find the SNT-rank of $G$ by repeatedly applying Lemma \ref{lem:leaf} on a leaf. In each step we obtain a disjoint union of trees and at most one unicyclic graph. Eventually we obtain a disjoint union of trees and at most one cycle. For this graph the SNT-rank can be computed by Proposition \ref{th:tree} and Proposition \ref{prop:cycle}.
\end{remark}

\section{Complete graphs without loops}\label{sec:complete}

Let $\C$ be a set-join cover of $K_n$ with $V(K_n) = [n]$. If $\mc K \vee_{[n]} \mc L \in \C$, then $\mc K \cap \mc L = \emptyset$, since $K_n$ has no loops. On the other hand, for any pair $i,j \in [n],$ $i \neq j$, there exist
 $\mc K \vee_{[n]} \mc L \in \C$, so that $i \in \mc K$, $j \in \mc L$.  
 
 \begin{definition}
Let $\bf T$ be a $k$-tuple of subsets of $[n]$. If for any pair $i,j \in [n],$ $i \neq j$, there exists  $\mc K, \mc L \in \bf T$ so that $i \in \mc K$, $j \in \mc L$ and $\mc K \cap \mc L=\emptyset$, then $\bf T$ is called \emph{a separating cover} of $n$ elements with $k$ sets. 
  \end{definition}

Hence, if $\C$ is a set-join cover of $K_n$, then $V(\C)$ is a separating cover of $n$ elements with $|V(\C)|$ sets. To determine $\spr(K_n)$ for a given $n$, we need to find minimal separating cover of $n$ elements. This problem is known as Katona problem and has been solved independently by A. C. C. Yao, \cite{MR422036} and M. C. Cai \cite{MR732208}, hence the problem of determining $\spr(K_n)$ is resolved. In this section we expand on the work in \cite{MR422036}, by showing how various set-join covers of $K_n$ can be constructed, investigate when these covers are optimal, and when they are essentially unique.

\begin{theorem}[\cite{MR422036}]\label{th:Katona}
Let $n \in \mathbb{N}$, and let $s(n):=\min\{k:$ there is a separating cover on $n$ elements with $k$ sets$\}$. For all $n \geq 2$, 
\begin{equation}\label{eq:Katona}
s(n)=\begin{cases}
3i &\text{if }\, 2\cdot 3^{i-1} <n \leq 3^i \\
3i+1 &\text{if }\,  3^{i} <n \leq 4\cdot 3^{i-1} \\
3i+2 &\text{if }\, 4 \cdot 3^{i-1} <n \leq 2\cdot 3^i, \\
\end{cases}
\end{equation}
while $s(1)=0$.
\end{theorem}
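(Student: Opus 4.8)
The plan is to establish the bound $s(n) \le k$ and its sharpness $s(n) \ge k$ separately for each of the three regimes, following Yao's original argument but recording it in the language of separating covers. The key object to track is how a separating cover on $n$ elements can be built from, or reduced to, a separating cover on roughly $n/3$ elements: the natural ``tripling'' construction takes a separating cover $\bf T$ on $m$ elements with $k$ sets and produces one on $3m$ elements with $k+3$ sets, by partitioning the $3m$ new elements into three blocks $B_1, B_2, B_3$ of size $m$, adding the three sets $B_1, B_2, B_3$ (which separate any pair lying in two different blocks), and, inside each block, re-using a copy of $\bf T$ (pulled back along a bijection of that block with $[m]$) to separate pairs within a block. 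Starting from the trivial fact that $s(1)=0$, $s(2)=2$, $s(3)=3$, $s(4)=3$ — for instance on $\{1,2,3,4\}$ take $\{1,2\},\{3,4\},\{1,3\}$, whose complements handle the remaining pairs — and iterating this construction, one gets $s(3^i) \le 3i$, $s(4\cdot 3^{i-1}) \le 3i+1$, and $s(2\cdot 3^i) \le 3i+2$; monotonicity of $s$ (a separating cover on $n$ elements restricts to one on any subset) then yields the upper bounds throughout each interval.

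For the lower bound I would argue that a separating cover with $k$ sets cannot separate more than $a(k)$ elements, where $a(k)$ is the maximum, over all ways of weighting, of a suitable product: the heart of Yao's argument is that if $\bf T = (\mc T_1,\dots,\mc T_k)$ separates $n$ elements, then assigning to each element $x \in [n]$ the sign vector in $\{+,0,-,\ast\}$ recording, for each coordinate $j$, whether $x \in \mc T_j$ only, $x \in \mc T_j$ and also forced out of some partner set, etc. — more precisely one shows each element gets a distinct ``type'' drawn from a set whose size is bounded by $3^{k/3}$ when $k \equiv 0$, by $4\cdot 3^{k/3-1}$ when $k\equiv 1$, and by $2\cdot 3^{(k-2)/3}$ when $k \equiv 2 \pmod 3$. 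This is an extremal/counting estimate: one partitions the $k$ coordinates into triples (plus a leftover pair or singleton), observes that within each triple of sets at most $3$ ``states'' of an element are compatible with the separation requirement, and multiplies. Inverting these bounds gives exactly the three cases of \eqref{eq:Katona}.

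The main obstacle is the lower bound, specifically pinning down the correct per-triple state count and handling the leftover coordinates cleanly: it is easy to overcount (naively one might think $4$ states per triple are possible) and the sharp constant $3$ per triple, together with the asymmetric treatment of one versus two leftover coordinates, is what produces the $3i$ versus $3i+1$ versus $3i+2$ trichotomy and the precise interval endpoints $2\cdot 3^{i-1}$, $3^i$, $4\cdot 3^{i-1}$, $2\cdot 3^i$. Since this is precisely the content of Yao's theorem \cite{MR422036}, I would either reproduce his combinatorial estimate in the separating-cover formulation or, given that the paper only needs the statement as a black box for translating $\spr(K_n)$, simply cite \cite{MR422036} and \cite{MR732208} for the full proof and use this section to develop the explicit cover constructions (which is in fact what the surrounding text announces it will do).
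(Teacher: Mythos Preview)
The paper does not prove Theorem~\ref{th:Katona}; it is quoted as a known result of Yao \cite{MR422036} (and independently Cai \cite{MR732208}) and used as a black box, exactly as your final paragraph anticipates. So on the level of overall strategy you and the paper agree: cite the result and focus instead on the explicit constructions, which the paper carries out in Proposition~\ref{lem:prod}. Your tripling construction is essentially the paper's recursive construction there (with $q_t=3$): lift each set of a cover of $[m]$ to the union of its three copies, then add the three block sets.

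That said, your sketch contains a concrete error. You claim $s(4)=3$ with the family $\{1,2\},\{3,4\},\{1,3\}$, but under the paper's definition of separating cover one needs, for every pair $i\neq j$, two \emph{disjoint} sets from the family with $i$ in one and $j$ in the other. The pair $(1,2)$ is not separated by your three sets: the only set containing $2$ is $\{1,2\}$, which meets both of the others. In fact $s(4)=4$ (this is the case $3^1<4\le 4\cdot 3^{0}$ of \eqref{eq:Katona}, and the paper confirms $\spr(K_4)=4$ in Remark~\ref{ex:Kn}). Your base cases for the upper bound should therefore be $s(2)=2$, $s(3)=3$, $s(4)=4$ (all realized by singletons), from which tripling gives $s(2\cdot 3^i)\le 3i+2$, $s(3^i)\le 3i$, $s(4\cdot 3^{i-1})\le 3i+1$, respectively.

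Your lower-bound outline is too loose to assess: the ``sign vector in $\{+,0,-,\ast\}$'' heuristic and the claim that each triple of coordinates admits at most $3$ compatible states are not the actual mechanism in Yao's proof, and as written they do not yield the stated bounds (one cannot simply partition $k$ arbitrary sets into triples and multiply). If you want to reproduce a proof rather than cite, the route closest to what the paper later exploits (Lemma~\ref{lem:math-gems}) is the recursive inequality $s(n)\ge \min_k\{k+s(\lceil n/k\rceil)\}$, obtained by looking at a largest component and the sets disjoint from it; but since the paper itself defers to \cite{MR422036,MR801393}, citing is the correct choice here.
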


\begin{corollary}
The SNT-rank of a complete graph is $\spr(K_n)=s(n)$, where $s(n)$ is defined in \eqref{eq:Katona}.
\end{corollary}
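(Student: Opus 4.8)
The plan is to establish the two inequalities $\spr(K_n) \le s(n)$ and $\spr(K_n) \ge s(n)$ separately, using Theorem \ref{th:main} to pass between the SNT-rank and set-join covers, and the observations made at the start of this section to pass between set-join covers of $K_n$ and separating covers.

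For the lower bound, I would take an OSJ cover $\C$ of $K_n$, so that $|\C| = \spr(K_n)$. As already noted, since $K_n$ has no loops, every component $\mc K \vee_{[n]} \mc L \in \C$ has $\mc K \cap \mc L = \emptyset$, and for every pair $i \ne j$ in $[n]$ there is a component $\mc K \vee_{[n]} \mc L \in \C$ with $i \in \mc K$ and $j \in \mc L$. Hence $V(\C)$ is a separating cover of $n$ elements with $|V(\C)| = |\C| = \spr(K_n)$ sets, which gives $s(n) \le \spr(K_n)$.

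For the upper bound, I would start from a minimal separating cover ${\bf T} = (\mc T_1, \ldots, \mc T_{s(n)})$ of $[n]$. For each unordered pair $\{i,j\}$ with $i \ne j$, the defining property of a separating cover supplies indices $a,b$ with $i \in \mc T_a$, $j \in \mc T_b$ and $\mc T_a \cap \mc T_b = \emptyset$; put $\mc T_a \vee_{[n]} \mc T_b$ into a collection $\C$. Then every edge $\{i,j\}$ of $K_n$ is covered by some component of $\C$, and because each chosen pair $\mc T_a, \mc T_b$ is disjoint, no component of $\C$ carries a loop. Thus $\C$ is a set-join cover of $K_n$ with $V(\C) \subseteq \{\mc T_1, \ldots, \mc T_{s(n)}\}$, so $|\C| = |V(\C)| \le s(n)$, and Theorem \ref{th:main} yields $\spr(K_n) \le |\C| \le s(n)$.

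Combining the two inequalities gives $\spr(K_n) = s(n)$ for $n \ge 2$, and the case $n = 1$ is handled by $\spr(K_1) = 0 = s(1)$ recorded in Proposition \ref{prop:union}. I do not expect a genuine obstacle here: the argument is essentially an unwinding of definitions, and the only point that needs a moment of care is verifying that the construction in the upper bound introduces no loops, which is precisely where the disjointness clause in the definition of a separating cover is used.
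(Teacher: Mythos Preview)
Your proposal is correct and follows essentially the same approach as the paper, which states the corollary without proof after observing that the component set of any set-join cover of $K_n$ is a separating cover. You have simply spelled out both directions of the correspondence in detail, including the converse construction (building a set-join cover of $K_n$ from a minimal separating cover) that the paper leaves implicit in the sentence ``To determine $\spr(K_n)$ for a given $n$, we need to find minimal separating cover of $n$ elements.''
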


To give some intuition on how a set-join cover for $K_n$ can be constructed, we offer the following example. 

\begin{example} \label{ex:pq}
Suppose that $n \in \mathbb{N}$ is written as a product of two factors $n=p\cdot q$ with $p, q \ge 2$. Let $V(K_n) = [n]$, and define: 
\begin{align*}
 \mc L_i &= \{(i-1)p+s; s=1,\ldots,p\} \ \text{ for every }  i \in [q], \\
 \mc K_j &= \{j+tp; t=0,\ldots, q-1\}, \ \text{ for every }  j \in [p].
\end{align*}
Then
$$\C =  \{\mc L_i \vee_{[n]} \mc L_j, i, j \in [q], i \ne j\} \cup  \{\mc K_i \vee_{[n]} \mc K_j, i, j \in [p], i \ne j\}$$
 is a set-join of $K_n$. It follows that $\spr(K_n) \le p + q$.
\end{example}

A generalisation of this example leads to an OSJ cover of $K_n$. 

\begin{proposition}\label{lem:prod}
Let $n \in \mathbb{N}$ and  $q_i \in \mathbb{N}$, $i \in [t]$, satisfy $q_i \geq 2$ and $ n\leq \prod_{k=1}^t q_k$. Then there exists a set-join cover $\C$ of $K_n$ with $|\C|=\sum_{i=1}^t q_i$ and $G(\C)=\cup_{i=1}^t K_{q_i}$. 
Furthermore, 
\begin{equation}\label{eq:Katona_factor}
\spr(K_n)= \min\left\{\sum_{i=1}^t q_i; n\leq \prod_{i=1}^t q_i\right\}.
\end{equation}
\end{proposition}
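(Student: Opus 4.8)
The plan is to prove Proposition \ref{lem:prod} in two parts: first construct the claimed set-join cover, then deduce the formula \eqref{eq:Katona_factor} by matching it against the Katona bound of Theorem \ref{th:Katona}.

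\medskip

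\textbf{Construction of the cover.} First I would generalise Example \ref{ex:pq} to $t$ factors. Assume $n = \prod_{i=1}^t q_i$ (if $n$ is strictly smaller, pad it out to $\prod q_i$; by Proposition \ref{prop:union} and the fact that $K_n$ is an induced subgraph of $K_{n'}$ for $n \le n'$, a set-join cover of $K_{n'}$ restricts to one of $K_n$ via the restriction operation $\C[\mc S]$ with $|\C[\mc S]| \le |\C|$, so no order is lost). Identify $[n]$ with the product $\prod_{i=1}^t [q_i]$ via mixed-radix digits, writing each vertex as a tuple $(a_1,\ldots,a_t)$ with $a_i \in [q_i]$. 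For each coordinate $i \in [t]$ and each value $c \in [q_i]$, let
$$\mc K^{(i)}_c = \{(a_1,\ldots,a_t) \in [n]; a_i = c\}.$$
For a fixed $i$, the family $\{\mc K^{(i)}_c; c \in [q_i]\}$ partitions $[n]$ into $q_i$ blocks, and I set $\C = \bigcup_{i=1}^t \{\mc K^{(i)}_c \vee_{[n]} \mc K^{(i)}_{c'}; c, c' \in [q_i],\ c \ne c'\}$. The key observation is that two distinct vertices $u = (a_1,\ldots,a_t)$ and $v = (b_1,\ldots,b_t)$ differ in at least one coordinate $i$, so $u \in \mc K^{(i)}_{a_i}$, $v \in \mc K^{(i)}_{b_i}$ with $a_i \ne b_i$; hence $\{u,v\}$ is covered, so $\C$ covers $E(K_n)$. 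Conversely, every set-join in $\C$ joins two disjoint blocks of a common partition, so it introduces only non-loop edges, i.e. $\C$ covers no edge outside $E(K_n)$. Counting components: $V(\C) = \{\mc K^{(i)}_c; i \in [t], c \in [q_i]\}$ — these are genuinely distinct sets (different $(i,c)$ give different subsets when the $q_i \ge 2$, as each is a proper nonempty block of a distinct coordinate partition, with a small check that no block of coordinate $i$ equals a block of coordinate $j$; this holds because a block for coordinate $i$ meets every block for coordinate $j$) — so $|\C| = \sum_{i=1}^t q_i$. Finally $G(\C)$ has an edge between $\mc K^{(i)}_c$ and $\mc K^{(j)}_{c'}$ precisely when $i = j$ and $c \ne c'$, which is exactly $\bigcup_{i=1}^t K_{q_i}$.

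\medskip

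\textbf{The formula.} The construction immediately gives $\spr(K_n) \le \min\{\sum_{i=1}^t q_i; n \le \prod_{i=1}^t q_i\}$. For the reverse inequality, it suffices to show that the right-hand side equals $s(n)$ from \eqref{eq:Katona}, since $\spr(K_n) = s(n)$ by the Corollary following Theorem \ref{th:Katona}. So I would prove the purely arithmetic identity
$$\min\left\{\textstyle\sum_{i=1}^t q_i; \ q_i \ge 2,\ n \le \prod_{i=1}^t q_i\right\} = s(n).$$
This is the classical fact underlying Yao's and Cai's solution: to cover $n$ with a product of integers each $\ge 2$ while minimising the sum, one should use factors equal to $3$ as much as possible, replacing $3+3+3$ ($\ge 27$) is more efficient than $2+2+2+2$ ($\le 16$), a $4$ is as good as $2+2$, and one never needs a factor exceeding $4$. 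Concretely: given a target sum $k$, the largest product achievable is $3^{k/3}$ when $3 \mid k$, $4 \cdot 3^{(k-4)/3}$ when $k \equiv 1 \pmod 3$, and $2 \cdot 3^{(k-2)/3}$ when $k \equiv 2 \pmod 3$ — which reproduces exactly the three thresholds in \eqref{eq:Katona}. I would verify this by a short induction on $k$ (or $t$): the inductive step is the elementary inequality that among $\{2,3,4,5,\ldots\}$ the ratio $q^{1/q}$ is maximised at $q = 3$, together with the exchange arguments $6 \to 3+3$ (product up) and $5 \to 2+3$ (product up, sum down) and $4 \leftrightarrow 2+2$ (product and sum unchanged), which let one assume every factor lies in $\{2,3,4\}$ and at most one factor is not $3$.

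\medskip

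\textbf{Main obstacle.} The construction part is essentially a routine mixed-radix bookkeeping, and the only mild subtlety there is confirming that the listed blocks are pairwise distinct so that $|\C|$ is genuinely $\sum q_i$ and not smaller (which would only help the bound, so it is harmless either way). The real content — and the step I expect to be the delicate one — is the arithmetic identification of $\min\{\sum q_i : n \le \prod q_i\}$ with $s(n)$, i.e. showing the factor-sum optimisation has exactly the ``powers of $3$'' solution with the stated boundary cases. This is where one must be careful with the inequalities at the three residue classes mod $3$ and with the boundary values of $n$ (e.g. $n = 2\cdot 3^{i-1}$ versus $n = 2\cdot 3^{i-1}+1$); it is standard but needs to be done cleanly, and it is the reason the statement can legitimately cite \cite{MR422036,MR732208} rather than reprove everything.
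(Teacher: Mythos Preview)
Your proposal is correct and matches the paper's approach: the mixed-radix construction is precisely what the paper's induction on $t$ unrolls to, and both handle $n < \prod q_i$ by restricting a cover of $K_{\prod q_i}$. For the formula the paper is marginally more economical --- instead of re-deriving the ``use $3$'s'' optimisation via exchange arguments, it simply observes from Theorem \ref{th:Katona} that $m := \max\{t : s(t) = s(n)\}$ is one of $3^i$, $4\cdot 3^{i-1}$, $2\cdot 3^i$, and that the obvious factorisation of this $m$ into $2$'s, $3$'s and $4$'s has $\sum q_j = s(m) = s(n)$, which already exhibits a tuple achieving the minimum.
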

 
\begin{proof} 
We prove the first claim by induction on $t$. If $t = 1$, then the claim is trivial. If $n \le m$, then $K_n$ is a subgraph of $K_m$ and a restriction of any set-join cover of $K_m$ to the elements of $K_n$ results in a set-join cover of $K_n$, in particular $\spr(K_n) \le \spr(K_m)$. This observation allows us to assume $n = \prod_{k=1}^t q_k$. The claim for $t=2$ is proved in Example \ref{ex:pq} and establishes the base of induction. Let $n_1 = \prod_{k=1}^{t-1} q_k$, $V(K_n) = [n]$ and $V(K_{n_1}) = [n_1]$. Let $\C$ be a set-join cover of $K_{n_1}$ with $|\C| = \sum_{k=1}^{t-1} q_k$. For every $\mc K \in V(\C)$ let 
$$  \mc K' = \cup_{i\in \mc K}\{i+tn_1;t=0,\ldots,q_t-1\} \subset [n]. $$
Furthermore, let
$$  \mc L_i =  \{(i-1)n_1+s;s=1,\ldots, n_1\} \ \text{ for every }  i \in [q_t]. $$
Then
$$\C' = \{\mc K'_1 \vee_{[n]} \mc K'_2; \mc K_1 \vee_{[n_1]} \mc K_2 \in \C\}$$
covers all the edges of $K_n$ within the sets $\mc L_1, \mc L_2, ..., \mc L_{q_t}$. So
$$\widehat{\C} = \C' \cup \{\mc L_i \vee_{[n]} \mc L_j, i, j \in [q_t], i \ne j\}$$
is a set-join cover of $K_n$ with $|\widehat{\C}| = \sum_{i=1}^t q_i$.

To complete the proof, we need to show $\spr(K_n) \ge \min\{\sum_{i=1}^t q_i; n\leq \prod_{i=1}^t q_i\}$. Let us fix $n$ and define $m = \max\{t; s(t)=~ s(n)\}$. From Theorem \ref{th:Katona} we know that $m$ is the largest positive integer that satisfies $\spr(K_n) = \spr(K_m)$. In particular,  $m$ has one of the forms $m=3^i, m= 4\cdot 3^{i-1}$, or $m = 2\cdot 3^i$. In each case we can write $m=\prod_{k=1}^t q_k$, where $q_j \in \{ 2,3,4\}$ for all $j \in [t]$. It is straightforward to check that the equality $\spr(K_m) = \sum_{i=1}^t q_i$ holds in each of these cases.
\end{proof}

\begin{remark}\label{ex:Kn}
An optimal set-join cover of $K_6$ is given in Example \ref{ex:K6}. 
Following the construction outlined in the proof of Proposition \ref{lem:prod}, we can produce examples of optimal set-join covers of $K_n$ for every $n \ge 2$. 

If $n=2, 3, 4, 5$, then $\spr(K_n) =n$, and the set-join cover $\C$, where all the components of $\C$ are singletons and $G(\C) = K_n$, is an OSJ cover of $K_n$. Note that for $n=2$ and for $n=3$ these OSJ covers are unique. For $n=4$ this set-join cover is not unique, since the factorization $4=2^2$ gives rise to an OSJ cover $\C'$ of $K_4$ with $G(\C')=2K_2$. In the case $n=5$ this set-join cover is also not unique, since the factorization $6=2\cdot 3$ gives rise to an OSJ cover $\C'$ of $K_5$ with $G(\C')=K_2 \cup K_3$.

Suppose that $n \ge 6$ and let $m = \max\{t; s(t) = s(n)\}$. An inductive construction outlined in the proof of Proposition \ref{lem:prod} gives an OSJ cover $\C$ of $K_m$, which we can restrict to $K_n$ to obtain an OSJ cover of $K_n$. 
Let us look at some properties of an OSJ cover $\C$ constructed in this way, for a few choices of $n$:
\begin{itemize}
\item For $n=3^i$, all components of $\C$ have cardinally $3^{i-1}$, and for each $\mc K \in \C$ there exist two other components $\mc K',\mc K'' \in V(\C)$, so that $\mc K, \mc K'$ and $\mc K''$ and pairwise disjoint.
\item For $n=3^i-1$, $2i$ components have cardinality $3^{i-1}$, and $i$ components have cardinality $3^{i-1}-1$. For each $\mc K \in \C$ there exist two other components $\mc K',\mc K'' \in V(\C)$, so that $\mc K, \mc K'$ and $\mc K''$ and pairwise disjoint.
\item For $n=2\cdot 3^i$,  the largest two components $\mc K, \mc K' \in V(\C)$ are disjoint and have cardinality $3^i$. All other components have cardinality $2\cdot 3^{i-1}$. 
\item For $n=2\cdot 3^i-1$,  the largest two components in $V(\C)$ are disjoint, one has $3^i$ and the other $3^{i}-1$ elements. There are $2i$ components of cardinality $2 \cdot 3^{i-1}$, and $i$ components of cardinality $2 \cdot 3^{i-1}-1$.
\end{itemize}
\end{remark}

Having constructed an OSJ cover of $K_n$ for each $n \in \mathbb{N}$, we want to consider what properties any OSJ cover $\C$ of $K_n$ has to have. In particular, we consider the components of minimal and maximal cardinality. 

\begin{lemma}\label{lem:min}
Let $m,n \in\mathbb{N}$, $n< m$, $\spr(K_n)=\spr(K_m)$, and let $\C$ be an OSJ cover of $K_m$. Then any component $\mc K \in V(\C)$ satisfies $|\mc K|>m-n$.
\end{lemma}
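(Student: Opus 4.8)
The plan is to argue by contradiction: suppose $\C$ is an OSJ cover of $K_m$ containing a component $\mc K$ with $|\mc K| \le m - n$. I would like to "delete" the vertices of $\mc K$ and obtain a set-join cover of a complete graph on enough vertices to contradict optimality. The natural object to look at is the restriction $\C[\mc S]$ to $\mc S = V(K_m) \setminus \mc K$, which by the discussion after the definition of restriction is a set-join cover of the induced subgraph $K_m[\mc S] = K_{m - |\mc K|}$, and satisfies $|\C[\mc S]| \le |\C|$. Since $|\mc K| \le m-n$, we have $m - |\mc K| \ge n$, so $K_n$ is a subgraph of $K_{m-|\mc K|}$ and hence $\spr(K_{m-|\mc K|}) \ge \spr(K_n) = \spr(K_m) = |\C|$. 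So far this only gives $|\C[\mc S]| = |\C|$, not yet a contradiction; the point must be that removing $\mc K$ strictly decreases the order of the cover.

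The key step is therefore to show that $\mc K$ contributes at least one component that genuinely disappears upon restriction, i.e. that $|\C[\mc S]| < |\C|$. Here I would use that $K_m$ has no loops: for every set-join $\mc P \vee_{[m]} \mc Q \in \C$ we have $\mc P \cap \mc Q = \emptyset$. In particular, the component $\mc K$ is joined in $\C$ to some component $\mc K'$ with $\mc K \cap \mc K' = \emptyset$ (such a join must exist, since $\mc K$ is not an isolated vertex of $G(\C)$ — otherwise it could simply be dropped, contradicting optimality, or more simply because every pair of vertices of $K_m$ must be separated). Now $\mc K' \subseteq \mc S$, so $\mc K'$ survives in $V(\C[\mc S])$, but I want to find a component that does \emph{not} survive. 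A component $\mc P$ fails to appear in $V(\C[\mc S])$ precisely when $\mc P \subseteq \mc K$ (so $\mc P \cap \mc S = \emptyset$) or when every join in $\C$ containing $\mc P$ has its other end inside $\mc K$. The cleanest route: among all components of $\C$ contained in $\mc K$ (note $\mc K$ itself is such a component, so this family is nonempty), pick one; it is removed in the restriction, while $|\C[\mc S]| \le |\C| - 1$ would follow once we check no new components are created and the map $V(\C) \to V(\C[\mc S])$, $\mc P \mapsto \mc P \cap \mc S$, is not injective or misses $\mc K$. Concretely, $\mc K \mapsto \emptyset$, which is not a component of $\C[\mc S]$ by definition, so $\mc K$ is simply lost; hence $|V(\C[\mc S])| \le |V(\C)| - 1 = |\C| - 1$.

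Combining the two steps gives $\spr(K_n) \le \spr(K_{m-|\mc K|}) \le |\C[\mc S]| \le |\C| - 1 = \spr(K_m) - 1 = \spr(K_n) - 1$, a contradiction. Therefore every component satisfies $|\mc K| > m - n$. The main obstacle I anticipate is the bookkeeping in the second step: one must be careful that restricting $\C$ to $\mc S$ does not accidentally identify two previously distinct surviving components in a way that would already have made $\C$ non-optimal (which is fine, as it only helps), and that the component $\mc K$ we remove is not needed to cover any edge of $K_{m-|\mc K|}$ — but this is automatic since $\mc K \cap \mc S = \emptyset$ means no edge of the induced subgraph touches $\mc K$. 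A secondary subtlety is justifying that $\mc K$ really is a component we may regard as "removed" rather than merely relabelled; this is handled by the definition of $\C[\mc S]$, which only keeps joins both of whose ends meet $\mc S$, so any join with an end equal to $\mc K$ is either dropped or has its $\mc K$-end replaced by $\emptyset$ and hence dropped.
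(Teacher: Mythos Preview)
Your proposal is correct and follows essentially the same approach as the paper: restrict the cover to $\mc S = [m]\setminus \mc K$, observe that the component $\mc K$ itself disappears so the restricted cover has order at most $|\C|-1$, and compare with $\spr(K_n)=\spr(K_m)$. The paper states this directly rather than by contradiction, and without the exploratory detours, but the substance is the same.
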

\begin{proof}
Let $\C$ be an OSJ cover of $K_m$, and $\mc K \in V(\C)$. Let $\C'$ be obtained from $\C$ by removing $\mc K$, and elements from $\mc K$ from all components of $\C$. Then $\C'$ is a set-join cover of $K_{m-|\mc K|}$ with $|\C'|\leq |\C|-1$. From $\spr(K_n)=\spr(K_m)$ we get $m-|\mc K|<n$, as required.
\end{proof}

\begin{corollary}
Let $\C$ be an OSJ cover of $K_n$ and $\mc K \in V(\C)$.
\begin{enumerate}
\item If $n=3^i$, then $|\mc K|\geq 3^{i-1}$. 
\item If $n=4 \cdot 3^i$, then $|\mc K|\geq 3^i$. 
\item If $n=2 \cdot 3^i$, then $|\mc K|\geq 2\cdot 3^{i-1}$. 
\end{enumerate}
\end{corollary}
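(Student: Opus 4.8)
The plan is to derive each of the three bounds directly from Lemma~\ref{lem:min} by choosing, for each value of $n$, an appropriate smaller integer $m'$ with $\spr(K_{m'}) = \spr(K_n)$ and applying the lemma with the roles of $m,n$ played by $n, m'$. Concretely, Lemma~\ref{lem:min} says: if $m' < n$ and $\spr(K_{m'}) = \spr(K_n)$, then every component $\mc K$ of an OSJ cover of $K_n$ satisfies $|\mc K| > n - m'$. So for each case I need to identify the largest ``safe'' gap $n - m'$, i.e.\ find an $m'$ just below the relevant threshold in Theorem~\ref{th:Katona} for which $s(m') = s(n)$.

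First, for $n = 3^i$: from \eqref{eq:Katona}, $s(t) = 3i$ exactly when $2\cdot 3^{i-1} < t \le 3^i$. So I would take $m' = 2\cdot 3^{i-1} + 1$, which satisfies $s(m') = 3i = s(3^i)$. Lemma~\ref{lem:min} then gives $|\mc K| > 3^i - (2\cdot 3^{i-1}+1) = 3^{i-1} - 1$, hence $|\mc K| \ge 3^{i-1}$, as claimed. Second, for $n = 4\cdot 3^i$: here $4 \cdot 3^i = 4 \cdot 3^{(i+1)-1}$, so by the second branch of \eqref{eq:Katona}, $s(t) = 3(i+1) + 1 = 3i+4$ whenever $3^{i+1} < t \le 4\cdot 3^i$. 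Taking $m' = 3^{i+1} + 1$ gives $s(m') = s(4\cdot 3^i)$, and Lemma~\ref{lem:min} yields $|\mc K| > 4\cdot 3^i - 3^{i+1} - 1 = 4\cdot 3^i - 3\cdot 3^i - 1 = 3^i - 1$, so $|\mc K| \ge 3^i$. Third, for $n = 2\cdot 3^i$: by the third branch, $s(t) = 3i+2$ whenever $4\cdot 3^{i-1} < t \le 2\cdot 3^i$, so I take $m' = 4\cdot 3^{i-1} + 1$; then $|\mc K| > 2\cdot 3^i - 4\cdot 3^{i-1} - 1 = 6\cdot 3^{i-1} - 4\cdot 3^{i-1} - 1 = 2\cdot 3^{i-1} - 1$, giving $|\mc K| \ge 2\cdot 3^{i-1}$.

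There is really no serious obstacle here: the entire corollary is a matter of reading off the correct interval endpoint from \eqref{eq:Katona} in each of the three cases and plugging into Lemma~\ref{lem:min}. The only mild care needed is the arithmetic rewriting $4\cdot 3^i = 4\cdot 3^{(i+1)-1}$ so that the second branch of the Katona formula applies with index $i+1$ rather than $i$, and checking that the chosen $m'$ is a genuine integer strictly less than $n$ (which it is, since $2\cdot 3^{i-1} + 1 < 3^i$, $3^{i+1}+1 < 4\cdot 3^i$, and $4\cdot 3^{i-1}+1 < 2\cdot 3^i$ all hold for $i \ge 1$). I would also remark in passing that these lower bounds are tight, since the OSJ covers of $K_{3^i}$, $K_{2\cdot 3^i}$ described in Remark~\ref{ex:Kn} have components exactly meeting these sizes; that comparison is optional but clarifies that the corollary cannot be strengthened.
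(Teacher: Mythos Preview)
Your proposal is correct and follows essentially the same approach as the paper: apply Lemma~\ref{lem:min} using the interval endpoints from Theorem~\ref{th:Katona} in each of the three cases. The paper's proof only spells out case~(1) and leaves the other two to the reader, so your write-up is in fact more detailed; the only minor slip is that $2\cdot 3^{i-1}+1 < 3^i$ requires $i\ge 2$ rather than $i\ge 1$, but for $i=1$ the bound $|\mc K|\ge 1$ is trivial anyway.
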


\begin{proof}
Let $n=3^i$, then $\spr(K_{3^i})=\spr(K_{2 \cdot 3^{i-1}+1})$ by Theorem \ref{th:Katona}. Hence, $|\mc K|>3^{i-1}-1$ by Lemma \ref{lem:min}. The other two items are proved in the same way. 
\end{proof}

\begin{proposition} \label{pr:notunique}
Suppose that $n \in \mathbb{N}$ satisfies one of the following conditions: $n\in \{4,5,7,8\}$, or $3^i+1 \le n \leq 2\cdot 3^i-2$ for some $i\ge 2$, or $2\cdot 3^i+1 \le n \leq 3^{i+1}-2$ for some $i\ge 2$. Then an OSJ cover of $K_n$ is not essentially unique.
\end{proposition}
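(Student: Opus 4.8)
The plan is to exhibit, for each $n$ in the stated range, two OSJ covers of $K_n$ that cannot be carried to one another by an automorphism of $K_n$, i.e. by a permutation of $[n]$. Since automorphisms of $K_n$ are exactly arbitrary relabellings of the vertices, two covers fail to be related by one precisely when some isomorphism-invariant of the covers differs; the cleanest such invariant to track is the multiset of cardinalities $\{|\mc K| : \mc K \in V(\C)\}$, or the isomorphism type of $G(\C)$. So it suffices in each case to produce two set-join covers of $K_n$, both of order $\spr(K_n) = s(n)$, whose component-cardinality multisets differ (or whose cover graphs $G(\C)$ are non-isomorphic).

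The main tool is Proposition \ref{lem:prod}: whenever $n \le \prod_{i=1}^t q_i$ with each $q_i \ge 2$ and $\sum q_i = s(n)$, we get an OSJ cover $\C$ with $G(\C) = \bigcup_{i=1}^t K_{q_i}$, and the construction in its proof produces components of controlled cardinalities (essentially the "balanced" block sizes, as catalogued in Remark \ref{ex:Kn}). The strategy is: for each $n$ in the range, find \emph{two} distinct factorizations $s(n) = \sum_{i=1}^t q_i = \sum_{j=1}^{t'} q'_j$ with $n \le \prod q_i$ and $n \le \prod q'_j$, yielding two OSJ covers with $G(\C)$ non-isomorphic; or, when the factorization is forced, find one cover from a factorization and another by a direct ad hoc construction (e.g. using a singleton component, as in the $\C_4$ vs $\C_5$ phenomenon for $K_4$) and compare cardinality multisets. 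Concretely: for $n \in \{4,5,7,8\}$ one checks the small cases by hand (e.g. $K_4$: singletons give $G(\C)=K_4$, while $4=2\cdot2$ gives $G(\C)=2K_2$; $K_7,K_8$: $s=5$, and $5=2+3$ with $7,8\le 6$ fails, so instead compare $5 = $ a "$3$-block plus singletons" arrangement against a "$2+3$ with slack" arrangement — these need to be spelled out). For $3^i+1 \le n \le 2\cdot 3^i - 2$ we have $s(n) = 3i+1$: one cover comes from $3i+1 = \underbrace{3+\cdots+3}_{i-1} + 4$ (since $n \le 3^{i-1}\cdot 4$), another from $3i+1 = \underbrace{3+\cdots+3}_{i-1} + 2 + 2$ (since $n \le 3^{i-1}\cdot 4$ as well), and $G(\C) = (i-1)K_3 \cup K_4$ versus $(i-1)K_3 \cup 2K_2$ are non-isomorphic. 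For $2\cdot 3^i + 1 \le n \le 3^{i+1} - 2$ we have $s(n) = 3i+3$: use $3(i+1) = \underbrace{3+\cdots+3}_{i+1}$ (valid since $n \le 3^{i+1}$) against $3(i+1) = \underbrace{3+\cdots+3}_{i-1} + 4 + 2 \cdot \text{(something)}$ — more carefully, $3(i+1) = \underbrace{3+\cdots+3}_{i} + 2 + 1$ is not allowed since $q\ge 2$, so one instead uses $\underbrace{3+\cdots+3}_{i-1}+2+2+2 = 3i+3$ with $n \le 3^{i-1}\cdot 8 $, giving $G(\C)=(i+1)K_3$ versus $(i-1)K_3 \cup 3K_2$, non-isomorphic.

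The bookkeeping obstacle, and the step I expect to be fiddly rather than deep, is verifying in each subcase that the \emph{second} factorization actually has product $\ge n$ across the whole stated interval (the endpoints $2\cdot 3^i - 2$ and $3^{i+1}-2$ are chosen precisely so that the less efficient factorization still covers, which is why the intervals stop two short of the Katona breakpoints), and that the resulting two cover graphs are genuinely non-isomorphic as loopless graphs — one must make sure no relabelling of blocks identifies them, which follows from comparing the multiset of clique sizes $\{q_i\}$. A secondary point to handle carefully: we need both covers to have order \emph{exactly} $s(n)$, i.e. to be optimal; this is immediate from $\sum q_i = s(n) = \spr(K_n)$ together with Proposition \ref{lem:prod}. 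Finally, for the four sporadic values $n\in\{4,5,7,8\}$ I would simply tabulate an explicit pair of covers each (for $n=4,5$ these already appear in Remark \ref{ex:Kn}), check their orders equal $s(4)=4$, $s(5)=5$, $s(7)=s(8)=5$, and observe the cover graphs differ; the only mild care needed is for $n=7,8$, where $s=5$ cannot be written as $2+3$ with $7,8 \le 6$, so one of the two covers must be built by hand rather than from a clean factorization.
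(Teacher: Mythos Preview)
Your plan has a genuine gap, and it is not just bookkeeping. First, there are arithmetic slips: $s(7)=s(8)=6$, not $5$ (since $2\cdot 3<7,8\le 3^2$), and on the interval $3^i+1\le n\le 2\cdot 3^i-2$ the value of $s(n)$ is \emph{not} constant---it equals $3i+1$ only for $n\le 4\cdot 3^{i-1}$ and jumps to $3i+2$ above that. But the real problem is your core mechanism. You assert that the intervals stop two short of the Katona breakpoints ``precisely so that the less efficient factorization still covers''; this is false. On the interval $2\cdot 3^i+1\le n\le 3^{i+1}-2$ with $s(n)=3(i+1)$, your alternative partition $3(i+1)=3(i-1)+2+2+2$ gives product $8\cdot 3^{i-1}$, and for $i\ge 2$ one has $8\cdot 3^{i-1}<3^{i+1}-2$ (indeed $8\cdot 3^{i-1}=3^{i+1}-3^{i-1}$), so the top of the interval is \emph{not} reached. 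The same phenomenon recurs on the upper half $4\cdot 3^{i-1}<n\le 2\cdot 3^i-2$ of your second interval for $i\ge 3$: with $s(n)=3i+2$ the only factorization whose product reaches $2\cdot 3^i-2$ is $3+\cdots+3+2$, so you cannot produce two non-isomorphic $G(\C)$'s from Proposition~\ref{lem:prod} alone. In short, the ``two factorizations'' idea works at the point $n=4\cdot 3^{i-1}$ (and by restriction just below it), but it cannot handle the full stated ranges.

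The paper's argument is different and explains the ``$-2$'' in the endpoints. It shows, quite generally, that whenever $n+2\le m$ and $s(n)=s(m)$, an OSJ cover of $K_n$ is not essentially unique: start from a single OSJ cover $\C$ of $K_m$, let $\mc K$ be a component of minimal cardinality (so $|\mc K|>m-n\ge 2$ by Lemma~\ref{lem:min}), and delete $m-n$ vertices in two ways---once entirely from within $\mc K$, once spread so that no single component loses all of them. The first restriction has a component of size $|\mc K|-(m-n)$ and the second does not, so the cardinality multisets differ. This covers all $n$ with $n\le m-2$ where $m\in\{2\cdot 3^i,3^{i+1}\}$ is the top of the Katona block, i.e.\ exactly the intervals in the statement except possibly the points $n=4\cdot 3^{i-1}$ and $n=4\cdot 3^{i-1}-1$; for those the paper uses your two-factorization trick at $4\cdot 3^{i-1}$ (yielding cover graphs $K_4\cup (i-1)K_3$ versus $2K_2\cup (i-1)K_3$) and then restricts. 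So the ingredient you are missing is the deletion argument, not a second global factorization.
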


\begin{proof}
In Remark \ref{ex:Kn} we have already seen that an OSJ cover of $K_n$ is not essentially unique for $n \in \{4, 5\}$. $K_8$ has two essentially different OSJ covers: one arising form the factorization $8 = 2\cdot 4$, and the other form $8 \le 3^2$. In the first case the cover has the graph $K_2 \cup K_4$, and in the second case the cover has the graph $2K_3$. Since $s(8) = 6$ both covers are optimal. Restrictions of those covers to the elements of $K_7$ gives two OSJ covers of $K_7$. 

Suppose that $n+2\le m$ and $s(n) = s(m)$. We will show that in this case an OSJ cover of $K_n$ is not essentially unique. Let $\C$ be an OSJ cover for $K_m$ and let $\mc K \in V(\C)$ be a component of $\C$ of minimal cardinality. Then $|\mc K| > m-n\geq 2$, by Lemma \ref{lem:min}. Now, choose $m-n$ vertices in $\mc K$ and remove them from $K_m$ and $\C$ to obtain a set-join cover $\C'$ of $K_n$. The set-join cover $\C'$ contains a component of cardinality $|\mc K| - (m-n)$. Next, choose $m-n$ vertices in $K_m$ so that they do not all belong to the same component of $\C$. This is possible since $m-n \ge 2$, and $m \ge 9$, so $m \ge |V(\C)|+3$. Remove them from $K_m$ and $\C$ to obtain a set-join cover $\C''$ of $K_n$. The set-join cover $\C''$ does not contain a component of cardinality $|\mc K| - (m-n)$, so it is essentially different from $\C'$.

For $n=4\cdot3^i=2^2\cdot3^i$ the construction in Proposition \ref{lem:prod} gives rise to two set-join covers $\C_1$ and $\C_2$, one with the graph $K_4 \cup iK_3$, and one with the graph $2K_2 \cup iK_3$. Both covers are optimal, proving that in this case OSJ cover is not unique. Now let $m$ satisfy $m < n$ and $s(m) = s(n)$. By restricting $\C_1$ and $\C_2$ to $[m]$, we get two OSJ covers of $K_m$ with different graphs, proving that OSJ covers of $K_m$ are not essentially unique. 
\end{proof}

Before we move to more general cases, we show
in the example below that $K_6$ has essentially unique OSJ cover. 

\begin{example}\label{ex:K6unique}
 Let $V(K_6) = [6]$ and let $\C$, with $V(\C) = \{\mc K_i; i=1,\ldots, 5\}$, be an OSJ cover of $K_6$. In addition, we assume $|\mc K_1|\geq |\mc K_i|$, $i=2,3,4,5$, and $\mc K_1 \vee_{[6]} \mc K_2 \in \C$. 

 Next we show that $|\mc K_1|=3$. Since $\mc K_1$ and $\mc K_2$ are disjoint, the set-joins of $\C$ with components $\mc K_3$, $\mc K_4$,  and $\mc K_5$, need to cover all the edges between the elements of $\mc K_1$. In particular, the set $V_1=\{\mc K_3 \cap \mc K_1, \mc K_4 \cap \mc K_1, \mc K_5 \cap \mc K_1\}$ needs to contain a component set of a set-cover for a complete graph with the vertex set  $\mc K_1$. Since $s(4) = 4$, this excludes  $|\mc K_1| \ge 4$. On the other hand, $s(6)=s(5)$ implies that $V(\C)$ doesn't contain singletons by Lemma \ref{lem:min}. If $|\mc K_i| = 2$ for all $i \in[5]$, then we may without loss of generality assume that $\mc K_1= \{1, 2\}$ and  $\mc K_2 \subset \{3, 4, 5, 6\}$. Since the edge $\{1,2\}$ needs to be covered, we may also assume that $\mc K_3= \{1, 3\}$ and $\mc K_4 = \{2, 4\}$. Similarly, since the edge $\{5,6\}$ needs to be covered, we may assume $5 \in \mc K_2$, $6 \in \mc K_5$ and $\mc K_2 \cap \mc K_5=\emptyset$. Now we have $j \in \mc K_2$ for $j=3$ or $j=4$. In either case, the edge $\{j,5\}$ cannot be covered by the components $\mc K_i$, $i=1,\ldots,5$. 

At this point we can assume $\mc K_1=\{1,2,3\}$, and deduce that $V_1$ is a component set of an OSJ cover for the complete graph on $\mc K_1$, hence $V_1=\{\{1\}, \{2\}, \{3\}\}$ and $\{\mc K_3 \vee_{[6]} \mc K_4,\mc K_4 \vee_{[6]} \mc K_5,\mc K_5 \vee_{[6]} \mc K_3\}\subset \C$. Now we know that $\mc K_3$, $\mc K_4$ and $\mc K_5$ are pairwise disjoint, and without loss of generality we may assume $\mc K_1\cap\mc K_3=\{1\}$, $\mc K_1\cap\mc K_4=\{2\}$ and $\mc K_1\cap\mc K_5=\{3\}$.

Next we argue that $\mc K_2=\{4,5,6\}$. Assume that $4 \not\in \mc K_2$, and note that $4$ can be contained in at most one of  $\mc K_3$, $\mc K_4$ and $\mc K_5$. Without loss of generality let $4 \in \mc K_3$. This leads to a contradiction, since the edge $\{1,4\}$ clearly cannot be covered. Arguing as above, we prove that $\{\mc K_3 \cap \mc K_2, \mc K_4 \cap \mc K_2, \mc K_5 \cap \mc K_2\}=\{\{4\},\{5\},\{6\}\}$. We may assume without loss of generality that $\mc K_3 = \{1, 4\}, \mc K_4 = \{2, 5\},$ and $\mc K_5 = \{3, 6\}$. We obtain that 
$$\C=\{\mc K_1 \vee_{[6]} \mc K_2, \mc K_3 \vee_{[6]} \mc K_4, \mc K_4 \vee_{[6]} \mc K_5,\mc K_5 \vee_{[6]} \mc K_3\}$$
and $G(\C) = K_2 \cup K_3$.

Notice that an OSJ cover of $K_6$ constructed above is not unique, since by permuting the vertices of $K_6$ we can obtain a different set-join cover of $K_6$.
\end{example}

To identify $K_n$ with  essentially unique OSJ covers, we need to extend the arguments seen in Example \ref{ex:K6unique}. To do so, we depend on a selection of observations, that we do not prove here, but that can be deduced from the proof of Theorem \ref{th:Katona} given in \cite[Ch. 18]{MR801393}. 

\begin{lemma}(\cite{MR422036, MR801393})\label{lem:math-gems}
Let $\C$ be an OSJ cover of $K_n$ for some  $n \geq 4$. Then
\begin{equation}\label{eq:Katon_ind}
s(n) = \min\left\{k + s\left(\left\lceil\frac{n}{k}\right\rceil\right); k = 2, 3, 4, 5\right\}.
\end{equation}
Let $\mc M_n$  be the set of all $k \in \{2,3,4,5\}$ for which the minimum is achieved, and $\mc K_1$ a component of $\C$ of maximal cardinality. Then there exists $k_0 \in \mc M_n$  so that: 
\begin{enumerate}
 \item $\mc K_1$ is disjoint with precisely $k_0-1$ components $\mc K_2,\ldots, \mc K_{k_0} \in V(\C)$. 
 \item The restriction of $\C$ to $\mc K_1$ is an OSJ cover of the complete graph on $\mc K_1$, and the restriction of $\C$ to $\mc S = [n] \setminus \cup_{j=2}^{k_0} \mc K_i$ is an OSJ cover of the complete graph on $\mc S$. 
 \item  $s(|\mc S|) = s(|\mc K_1|)= s(\lceil\frac{n}{k_0}\rceil) = s(n)-k_0$.     
\end{enumerate}
\end{lemma}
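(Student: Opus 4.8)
The plan is to pass from OSJ covers of $K_n$ to minimal separating covers and then to recover both the recursion~\eqref{eq:Katon_ind} and its structural refinement from the resolution of the Katona problem. First I would fix the dictionary: by Theorem~\ref{th:main} and the discussion preceding Theorem~\ref{th:Katona}, a set-join cover $\C$ of $K_n$ has component set $V(\C)$ that is a separating cover of $[n]$, while conversely every separating cover $\mathbf T$ of $[n]$ yields the set-join cover $\{\mc K\vee_{[n]}\mc L;\ \mc K,\mc L\in\mathbf T,\ \mc K\cap\mc L=\emptyset\}$, whose component set is contained in $\mathbf T$. Hence $\spr(K_n)=s(n)$, the OSJ covers of $K_n$ are exactly the minimal separating covers, and for every $\mc S\subseteq[n]$ the restriction $\C[\mc S]$ is a separating cover of $\mc S$ (the subgraph of $K_n$ induced on $\mc S$ being complete), so $|\C[\mc S]|\ge s(|\mc S|)$.

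Next I would establish the recursion. The inequality $s(n)\le k+s(\lceil n/k\rceil)$, $k\in\{2,3,4,5\}$, follows from the construction in the proof of Proposition~\ref{lem:prod}: apply its inductive step with an additional factor $q_t=k$ to an optimal set-join cover of $K_{\lceil n/k\rceil}$, obtaining a set-join cover of $K_{k\lceil n/k\rceil}$ with $k+s(\lceil n/k\rceil)$ components, and restrict it to $[n]$. This gives $s(n)\le\min\{k+s(\lceil n/k\rceil):k\in\{2,3,4,5\}\}$; that equality holds, i.e.\ that \eqref{eq:Katon_ind} is valid, I would read off the closed form of $s$ in Theorem~\ref{th:Katona} by checking it on each of its three ranges of $n$.

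For the structural claims I would argue as follows. Let $\C$ be an OSJ cover of $K_n$, write $V(\C)=\{\mc K_1,\dots,\mc K_m\}$ with $m=s(n)$ and $|\mc K_1|$ maximal, put $a=|\mc K_1|$, and let $d$ be the number of components disjoint from $\mc K_1$. The traces $\mc K\cap\mc K_1$ of the components meeting $\mc K_1$ form a separating cover of the complete graph on $\mc K_1$; the only trace equal to all of $\mc K_1$ is that of $\mc K_1$ itself (by maximality) and, since it cannot be part of a disjoint pair separating two points of $\mc K_1$, it may be discarded, so at least $s(a)$ components other than $\mc K_1$ meet $\mc K_1$, whence $d\le m-1-s(a)$. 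Put $\mc S=[n]\setminus\bigcup\{\mc K\in V(\C);\ \mc K\cap\mc K_1=\emptyset\}$; then $\mc K_1\subseteq\mc S$, the components meeting $\mc S$ are precisely those meeting $\mc K_1$, no component disjoint from $\mc K_1$ meets $\mc S$, and $\C[\mc S]$ is a separating cover of the complete graph on $\mc S$ in which (by the same argument) the trace $\mc K_1$ is redundant, so $s(|\mc S|)\le m-d-1$. I would then run the largest-component analysis inductively inside $\mc S$ and match it against the recursion to force these inequalities to be simultaneously tight: there is $k_0\in\mc M_n$ with $d=k_0-1$, $m=k_0+s(\lceil n/k_0\rceil)$, $s(a)=s(|\mc S|)=s(\lceil n/k_0\rceil)=m-k_0$, and $|\C[\mc K_1]|=s(a)$, $|\C[\mc S]|=s(|\mc S|)$, so both restrictions are optimal. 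Renaming the $k_0-1$ components disjoint from $\mc K_1$ as $\mc K_2,\dots,\mc K_{k_0}$ makes $\mc S=[n]\setminus\bigcup_{j=2}^{k_0}\mc K_j$, and these statements are precisely conclusions (1)--(3).

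I expect the main obstacle to be exactly this tightness step: proving that a \emph{minimal} separating cover of $[n]$ must decompose so rigidly around its largest set, with no slack hiding in $d$, in the restriction to $\mc K_1$, or in the restriction to $\mc S$, and that the value $k_0$ delivered by this decomposition lies in $\mc M_n$. This is the substance of the resolution of the Katona problem, so a fully self-contained proof would essentially reproduce the extremal induction of \cite[Ch.~18]{MR801393} in the separating-cover language set up above; alternatively, as the paper does, one takes the required facts as consequences of \cite{MR422036} and \cite{MR801393}.
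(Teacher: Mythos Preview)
The paper does not give a proof of this lemma: it is stated with citation to \cite{MR422036,MR801393}, preceded by the remark that the required observations ``we do not prove here, but that can be deduced from the proof of Theorem~\ref{th:Katona} given in \cite[Ch.~18]{MR801393}.'' Your proposal is therefore not competing with a proof in the paper but rather sketching what such a proof would look like, and you correctly recognise this in your final paragraph.

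Your sketch is sound as far as it goes. The dictionary between OSJ covers of $K_n$ and minimal separating covers is exactly what the paper sets up before Theorem~\ref{th:Katona}; the upper bound $s(n)\le k+s(\lceil n/k\rceil)$ via Proposition~\ref{lem:prod} and verification of equality against the closed form in Theorem~\ref{th:Katona} is the right way to obtain \eqref{eq:Katon_ind}; and your structural inequalities $d\le m-1-s(|\mc K_1|)$ and $s(|\mc S|)\le m-d-1$, derived by noting that the trace of $\mc K_1$ is redundant in both restrictions, are the correct starting point for the extremal argument. One small point worth making explicit: you need $d\ge 1$ because $\mc K_1\in V(\C)$ must appear in some set-join $\mc K_1\vee_{[n]}\mc L$ with $\mc L$ disjoint from $\mc K_1$.

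You are also right that the genuine content lies in the tightness step, namely showing that $d+1\in\{2,3,4,5\}$, that $d+1\in\mc M_n$, and that all the displayed inequalities are equalities simultaneously. This is precisely the inductive extremal analysis of the Katona problem carried out in \cite{MR422036} and \cite[Ch.~18]{MR801393}, and the paper simply imports it rather than reproducing it. Your proposal accurately identifies both the skeleton of the argument and where the real difficulty sits.
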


\begin{proposition} \label{pr:3nai}
Suppose that $n \in \mathbb{N}$, $n = 3^i$ or $n= 2\cdot 3^i$ for some $i\ge 1$. Then $K_n$ has essentially unique OSJ cover.
\end{proposition}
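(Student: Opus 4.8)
The plan is to prove the claim by induction on $i$, using Lemma \ref{lem:math-gems} as the main engine, together with the already-established essential uniqueness of the small base cases $K_3$ and $K_6$ (the latter being Example \ref{ex:K6unique}; for $K_3$ the OSJ cover is literally unique since all components must be singletons). For the inductive step, let $\C$ be any OSJ cover of $K_n$ with $n = 3^i$ or $n = 2\cdot 3^i$. First I would determine the set $\mc M_n$ of optimal split sizes from \eqref{eq:Katon_ind}: when $n = 3^i$ one checks that $\mc M_n = \{3\}$, and when $n = 2\cdot 3^i$ one checks that $\mc M_n = \{2\}$ (here I would use Theorem \ref{th:Katona} to compute $s(\lceil n/k\rceil) + k$ for $k = 2,3,4,5$ and verify the minimum is attained at a single value). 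This forces $k_0 = 3$ (resp. $k_0 = 2$) in Lemma \ref{lem:math-gems}, so the structure is rigid: a maximal component $\mc K_1$ is disjoint from exactly $k_0 - 1$ other components, and by item (3) we have $s(|\mc K_1|) = s(n) - k_0 = s(3^{i-1})$ (resp. $s(n)-2 = s(3^i \cdot 2 / 2) = s(3^i)$... wait, need care), so $|\mc K_1|$ lies in the range where the complete graph on $\mc K_1$ has the same Katona number as $K_{3^{i-1}}$ (resp. $K_{3^i}$ when $n = 2\cdot 3^i$).

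Next I would pin down the cardinalities exactly. By the Corollary following Lemma \ref{lem:min}, every component of an OSJ cover of $K_{3^i}$ has size $\ge 3^{i-1}$, and similarly every component of an OSJ cover of $K_{2\cdot3^i}$ has size $\ge 2\cdot 3^{i-1}$; combined with a counting bound (the three pairwise-disjoint components in the $n=3^i$ case have sizes summing to $\le n = 3^i$, forcing each to be exactly $3^{i-1}$; the two disjoint maximal components in the $n = 2\cdot 3^i$ case have sizes summing to $\le 2\cdot 3^i$, and since both are maximal and $s(|\mc K_1|) = s(3^i)$ forces $|\mc K_1| \le 3^i$... actually $2\cdot 3^{i-1} < 3^i \le $ needs the precise Katona interval, yielding $|\mc K_1| = 3^i$), I get that the partition of $[n]$ induced by the disjoint maximal components is into $k_0$ blocks of equal size $n/k_0$. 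Then I apply Lemma \ref{lem:math-gems}(2): the restriction of $\C$ to each block $\mc K_j$ is an OSJ cover of the complete graph on $\mc K_j \cong K_{n/k_0}$, and $n/k_0$ is again of the form $3^{i-1}$ or $2\cdot 3^{i-1}$, so the induction hypothesis applies to give essential uniqueness of each restricted cover. Since all $k_0$ blocks are "the same" complete graph, their OSJ covers are related by automorphisms, and one checks the cross-block edges are covered in the unique possible way (each block contributes the same "transversal" family of components, and the set-joins between blocks are forced), so $G(\C) \cong k_0 K_{n/k_0}$-type structure glued appropriately — matching the construction of Proposition \ref{lem:prod} and Remark \ref{ex:Kn}. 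Assembling an explicit automorphism of $K_n$ carrying any such $\C$ to the canonical one (built block-by-block from the isomorphisms provided by the inductive hypothesis on each of the $k_0$ blocks, plus a permutation of the blocks themselves) completes the step.

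The main obstacle I anticipate is the "gluing" argument: knowing that the restriction of $\C$ to each of the $k_0$ equal-size blocks is essentially unique as a cover of that block does not immediately tell us that the components straddling two blocks, or the incidence pattern of which intra-block components get joined across blocks, is forced. I would handle this by arguing that after using Lemma \ref{lem:math-gems}(2) to fix each block's internal cover up to an automorphism of that block, the total component count $\sum q_j = s(n)$ leaves no room for "extra" components, so every component of $\C$ must be supported inside a union of blocks in a constrained way; then a parity/counting argument (each cross-block edge between block $a$ and block $b$ must be covered, and the only components available are the $n/k_0 + n/k_0$-ish ones already accounted for) shows the cross-structure is exactly the $K_{k_0}$-pattern on the "block-transversal" components, as in Example \ref{ex:pq}. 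A secondary subtlety is making sure the base case $i = 1$ genuinely covers both families: $n = 3$ gives $K_3$ (unique OSJ cover, all singletons) and $n = 6$ gives $K_6$ (essentially unique by Example \ref{ex:K6unique}), so the induction is correctly seeded; I would state this explicitly at the start.
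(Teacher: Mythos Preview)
Your overall architecture---induction on $i$ with base cases $K_3$ and $K_6$, invoking Lemma~\ref{lem:math-gems} to control the top-level split, and then applying the induction hypothesis to each block---matches the paper's approach. However, there is a real gap in your computation of $\mc M_n$.

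You claim that for $n = 2\cdot 3^i$ one has $\mc M_n = \{2\}$. This is false: in fact $\mc M_{2\cdot 3^i} = \{2,3\}$ for $i \ge 2$. Indeed, with $k=3$ we get $\lceil n/3\rceil = 2\cdot 3^{i-1}$ and $s(2\cdot 3^{i-1}) = 3i-1$, so $3 + (3i-1) = 3i+2 = s(n)$, matching the value for $k=2$. Thus Lemma~\ref{lem:math-gems} does \emph{not} force $k_0 = 2$; the possibility $k_0 = 3$ must be ruled out separately, and this is the most delicate part of the argument.

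The paper handles this as follows. If $k_0 = 3$ for $n = 2\cdot 3^i$, one first shows (via the cardinality bounds you outline) that $\mc K_1,\mc K_2,\mc K_3$ are pairwise disjoint, partition $[n]$, and each has size $2\cdot 3^{i-1}$. Then $\C[\mc K_j]$ is an OSJ cover of $K_{2\cdot 3^{i-1}}$ for each $j$, essentially unique by induction, and by Remark~\ref{ex:Kn} its two largest components have size $3^{i-1}$ while the rest have size $2\cdot 3^{i-2}$. Now pick $\mc K_4 \in V(\C)\setminus\{\mc K_1,\mc K_2,\mc K_3\}$ with $|\mc K_4 \cap \mc K_1|$ maximal, so $|\mc K_4 \cap \mc K_1| = 3^{i-1}$. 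Since $|\C[\mc K_j]| = |\C|-3$ for each $j$, the component $\mc K_4$ must meet both $\mc K_2$ and $\mc K_3$ nontrivially, and hence $|\mc K_4 \cap \mc K_j| \ge 2\cdot 3^{i-2}$ for $j=2,3$. Summing gives $|\mc K_4| \ge 3^{i-1} + 4\cdot 3^{i-2} > 2\cdot 3^{i-1} = |\mc K_1|$, contradicting the maximality of $|\mc K_1|$. Without this step your proof for $n = 2\cdot 3^i$ is incomplete. The remainder of your plan (the $n=3^i$ case and the $k_0=2$ case for $n=2\cdot 3^i$) is essentially what the paper does.
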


\begin{proof}
We will prove the claim by induction on $i$. Since we have already seen that $K_3$ and $K_6$ have essentially unique OSJ covers, the base of induction is established. We assume $i \geq 2$, and that the claim holds for  $n = 3^{i-1}$ and $n= 2\cdot 3^{i-1}$. Throughout the proof we also assume that $\C$ is an OSJ cover of $K_n$, and $\mc K_1 \in V(\C)$ is one of its components with maximal cardinality.

First we want to determine the parameter $k_0$ from Lemma \ref{lem:math-gems}.  Combining \eqref{eq:Katona} and \eqref{eq:Katon_ind} it is straightforward to see that  for $i \geq 2$ we have $\mc M_{3^i}=\{3\}$ and $\mc M_{2 \cdot3^i}=\{2,3\}$.
Suppose that for $n=2\cdot 3^{i}$ we have $k_0=3$. Applying Lemma \ref{lem:math-gems} we get $s(|\mc K_1|) =s(|\mc S|)= s(\lceil\frac{n}{3}\rceil) = s(2\cdot 3^{i-1}) = 3i-1$. This restricts $4\cdot 3^{i-2}<|\mc K_1| \leq |\mc S|\leq 2 \cdot 3^{i-1}$. From
\begin{align*}
|\mc K_2\cup \mc K_3|&\leq 2 |\mc K_1|\leq 4 \cdot 3^{i-1},\\
|\mc K_2\cup \mc K_3|&=n-|\mc S|\geq 2 \cdot 3^{i}-2 \cdot 3^{i-1}=4 \cdot 3^{i-1}
\end{align*}
we conclude that $|\mc K_2\cup \mc K_3|=4 \cdot 3^{i-1}$. Hence, $\mc K_2\cap \mc K_3=\emptyset$, and $|\mc K_i|=2 \cdot 3^{i-1}$ for $i=1,2,3$. Note that for $i=1,2,3$, $|\C[\mc K_i]|=|\C|-3$, hence $\mc C[\mc K_i]$ is an OSJ cover of $\mc K_i$, and by the induction hypothesis essentially unique. By Remark  \ref{ex:Kn}, $\C[\mc K_i]$ has two components of cardinality $3^{i-1}$, and all other components of cardinality $2\cdot 3^{i-2}$. 
Let $\mc K_4$ be one of the components of $\C$ with the largest intersection with $\mc K_1$. From $|\C[\mc K_i]|=|\C|-3$ for $i=2,3$, we deduce that  $\mc K_4 \cap \mc K_i\neq \emptyset$ for $i=2,3$. Then $|\mc K_4 \cap \mc K_1| = 3^{i-1}$ and $|\mc K_4 \cap \mc K_i| \ge  2\cdot 3^{i-2}$ for $i=2,3$. It follows that $|\mc K_4| \ge 3^{i-1} + 2\cdot 3^{i-2} + 2\cdot 3^{i-2} > 2\cdot 3^{i-1} = |\mc K_1|$, a contradiction with maximality of $|\mc K_1|$. 

Let $n = 3^i$ for $i \ge 2$, and hence $s(n) = 3i$ and $k_0=3$. From Lemma \ref{lem:math-gems} we get $s(|\mc K_1|)=s(|\mc S|)=3(i-1)$, implying $|\mc K_1|\leq|\mc S|\leq 3^{i-1}$. Hence,
\begin{align*}
|\mc K_2\cup \mc K_3|&\leq 2 |\mc K_1|\leq 2 \cdot 3^{i-1},\\
|\mc K_2\cup \mc K_3|&=n-|\mc S|\geq  3^{i}- 3^{i-1} =2 \cdot 3^{i-1}. 
\end{align*}
As above, we conclude that $\mc K_1$, $\mc K_2$, and $\mc K_3$ are pairwise disjoint, their union is $[n]$, and they all have $\frac{n}{3} = 3^{i-1}$ elements. Since we are allowing isomorphism of graphs, $\mc K_j$, $j \in [3]$,  can be taken to be any three subsets of $[n]$ that satisfy those conditions. 
Furthermore, $\C[\mc K_j]$, $j \in [3]$, is an OSJ cover of the complete graph on   $\mc K_j$ and by the induction hypothesis essentially unique. Since $\mc K_j$, $j \in [3]$, are pairwise disjoint and $\cup_{j \in [3]}\mc K_j=[n]$ this implies essential uniqueness of $\C$. 

The proof for $n = 2\cdot 3^i$, $i\geq 2$ with $s(n)=2 +3i$ and $k_0=2$ is very similar and we leave it to the reader. 
\end{proof}

\begin{remark}
With arguments, akin to the ones in the proof of Proposition \ref{pr:3nai},  it can be shown that for $n = 4 \cdot 3^i$ the graph $K_n$ has exactly two essentially different OSJ covers. In particular, the components of any OSJ cover of $K_n$ either satisfy $|\mc K_1| = \ldots = |\mc K_4| = 2\cdot 3^i$, $|\mc K_5| = \ldots = |\mc K_{3i+4}| = 4 \cdot 3^{i-1}$ or $|\mc K_1| = \ldots  = |\mc K_{3i}| = 4\cdot 3^{i-1}$, $|\mc K_{3i+1}| = \ldots = |\mc K_{3i+4}| = 3^i$.

Notice that in this case two possible values of $k_0 \in \mc M_n$ can be attained, namely in the first cover we have $k_0=2$ and in the second cover $k_0=3$.
\end{remark}

The following proposition covers the remaining cases with essentially unique OSJ cover. Only a sketch of the proof is given. The proof  with all the technical details included  would be rather long, and would not give significant new insights to the reader. 

\begin{proposition} \label{pr:3nai-1}
Let $n \in \mathbb{N}$ be of the form $n = 3^i - 1$ for some $i\ge 3$, or of the form $n= 2\cdot 3^i - 1$ for some $i\ge 2$. Then $K_n$ has essentially unique OSJ cover.
\end{proposition}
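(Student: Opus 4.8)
The plan is to argue by induction on $n$, treating the two families $n=3^i-1$ and $n=2\cdot 3^i-1$ together, following the template of the proof of Proposition \ref{pr:3nai}. Fix an OSJ cover $\C$ of $K_n$, let $\mc K_1\in V(\C)$ be a component of maximal cardinality, and apply Lemma \ref{lem:math-gems} to obtain a suitable $k_0\in\mc M_n$. A short computation from \eqref{eq:Katona} and \eqref{eq:Katon_ind} gives $\mc M_{3^i-1}=\{3\}$ for $i\geq 3$ and $\mc M_{2\cdot 3^i-1}=\{2,3\}$ for $i\geq 2$; in the second family I would first exclude $k_0=3$ exactly as in the case $n=2\cdot 3^i$ of Proposition \ref{pr:3nai} (if $k_0=3$, then $\mc K_2,\mc K_3$ are forced to be so large that a component of $\C$ meeting all of $\mc K_1,\mc K_2,\mc K_3$ would exceed $|\mc K_1|$), leaving $k_0=2$.

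Next I would pin down the block structure. Put $\mc S=[n]\setminus\bigcup_{j=2}^{k_0}\mc K_j$. By Lemma \ref{lem:math-gems}(3), $s(|\mc K_1|)=s(|\mc S|)=s(n)-k_0$, and combining this with $|\mc K_1|\leq|\mc S|$, with $\big|\bigcup_{j\geq 2}\mc K_j\big|\leq(k_0-1)|\mc K_1|$, and with $\big|\bigcup_{j\geq 2}\mc K_j\big|=n-|\mc S|$ forces the cardinalities: for $n=3^i-1$ one gets $|\mc K_1|=|\mc S|=3^{i-1}$ and $\big|\bigcup_{j\geq2}\mc K_j\big|=2\cdot 3^{i-1}-1$, and for $n=2\cdot 3^i-1$ one gets $|\mc S|=3^i$ and $|\mc K_2|=3^i-1$ (the possibility $|\mc S|<3^i$ is ruled out by a second application of Lemma \ref{lem:math-gems} to the cover $\C[\mc S]$ of $K_{|\mc S|}$, which would make $\mc K_1$, hence $\mc K_2$, too small, hence $\mc S$ too large). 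Since $\mc K_1$ is disjoint from $\bigcup_{j\geq 2}\mc K_j$, in fact $\mc K_1=\mc S$, so $[n]$ is the disjoint union of $\mc K_1$ and $\mc K_2\cup\cdots\cup\mc K_{k_0}$. A counting argument (restricting $\C$ to the second block removes the component $\mc K_1$ but nothing more, while the restriction must still have at least $s$ of the relevant argument many components) then shows that $\C$ restricted to each block is an OSJ cover of the complete graph on that block: $K_{3^{i-1}}$ resp.\ $K_{3^i}$ on $\mc K_1$, and $K_{2\cdot 3^{i-1}-1}$ resp.\ $K_{3^i-1}$ on the other. By Proposition \ref{pr:3nai} the first is essentially unique, and by the induction hypothesis the second is essentially unique whenever $i\geq 3$.

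What remains is the gluing step. Since $K_n$ has no loops, every $\mc A\vee_{[n]}\mc B\in\C$ satisfies $\mc A\cap\mc B=\emptyset$; together with the disjoint block decomposition this forces each component of $\C$ other than $\mc K_1,\ldots,\mc K_{k_0}$ to meet $\mc K_1$ (its trace on $\mc K_1$ being a component of the essentially unique cover of the first block) and, when $k_0=2$, to meet $\mc K_2$ as well (its trace on $\mc K_2$ being a component of the cover of the second block), while the only set-join crossing the two blocks is $\mc K_1\vee_{[n]}\big(\mc K_2\cup\cdots\cup\mc K_{k_0}\big)$. Consequently the restriction of $G(\C)$ to the remaining components is simultaneously isomorphic to the cover graph of the first block and to that of the second; this rigidly identifies the two essentially unique sub-covers and shows that $\C$ is determined up to a permutation of $V(K_n)$, i.e.\ up to an automorphism of $K_n$.

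I expect this gluing step, together with the base case $n=17$ of the family $n=2\cdot 3^i-1$, to be the main obstacle and the reason a full proof is long. For $n=17$ the second block is $K_8$, which by Proposition \ref{pr:notunique} has two essentially different OSJ covers, with cover graphs $2K_3$ and $K_2\cup K_4$; but the matching performed in the gluing step forces the cover graph of the $K_8$-block to be isomorphic to that of the $K_9$-block, which Proposition \ref{pr:3nai} identifies as $2K_3$, so the $K_2\cup K_4$ cover cannot occur and essential uniqueness is restored. Carrying out the exclusion of $k_0=3$ in full, and checking that the cardinality bookkeeping really does force $|\mc S|$ to equal $3^i$ (and not some value for which $K_{|\mc S|}$ fails to be essentially unique), are the remaining points that require care.
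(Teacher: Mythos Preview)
Your approach is essentially the same as the paper's, which likewise gives only a sketch: induction over $\mc N=\{3^i-1:i\ge3\}\cup\{2\cdot3^i-1:i\ge2\}$, compute $\mc M_n$ ($\{3\}$ in the first family, $\{2,3\}$ in the second), exclude $k_0=3$ in the second family exactly as in Proposition~\ref{pr:3nai}, and then apply the induction hypothesis to the complementary block $\mc K_2\cup\cdots\cup\mc K_{k_0}$.

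A few points where the paper's sketch differs from yours. First, the paper simply declares $n\in\{17,26\}$ as base cases ``left to the reader'' rather than attempting to absorb $n=17$ into the inductive machinery via the gluing argument; your idea that the cover graph of the $K_8$-block is forced to match that of the $K_9$-block is plausible but presupposes that $\C[\mc K_2]$ is itself an OSJ cover of $K_8$ and that every component other than $\mc K_1$ meets $\mc K_2$, neither of which follows directly from Lemma~\ref{lem:math-gems} (which only controls what is disjoint from $\mc K_1$). Second, for $n=3^i-1$ the paper does not stop at the two-block decomposition $\mc K_1\,\big|\,\mc K_2\cup\mc K_3$: after invoking the induction hypothesis on $\C[\mc K_2\cup\mc K_3]$ it appeals to Remark~\ref{ex:Kn} to read off that the two largest components of the essentially unique cover of $K_{2\cdot3^{i-1}-1}$ are disjoint of sizes $3^{i-1}$ and $3^{i-1}-1$, hence $\mc K_2\cap\mc K_3=\emptyset$, giving a genuine three-block partition $\mc K_1\sqcup\mc K_2\sqcup\mc K_3=[n]$. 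The gluing is then phrased as ``$\C[\mc K_j]$ is an essentially unique OSJ cover for each $j\in[3]$,'' rather than via your matching-of-cover-graphs argument. Your version of the gluing is more explicit than the paper's, but your statement that ``the only set-join crossing the two blocks is $\mc K_1\vee(\mc K_2\cup\cdots\cup\mc K_{k_0})$'' is not correct when $k_0=3$ (there $\mc K_1\vee\mc K_2$, $\mc K_1\vee\mc K_3$, $\mc K_2\vee\mc K_3$ are the crossing set-joins), so this part would need reworking along the paper's three-block lines.
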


\noindent \emph{Sketch of the proof.} We claim that $K_n$ has essentially unique OSJ cover for every $n$ in the set 
$$\mc N = \{n \in \mathbb{N}, n = 3^i - 1, i\ge 3\} \cup \{n \in \mathbb{N}, n = 2\cdot 3^i - 1, i\ge 2\}.$$ This claim can be proved by induction, where the base of induction needs to be established for $n \in \{17,26\}$. (This is left to the reader.) Next we fix $n$, and assume that the claim holds for $m \in \mc N$, with $m < n$. We assume notation and definitions from Lemma \ref{lem:math-gems}. 

For $n = 3^i - 1, i\ge 3$, we have $\mc M_n=\{3\}$. Lemma \ref{lem:math-gems} implies $s(|\mc K_1|) =s(|\mc S|)=3 (i-1)$. From here we get:
$$|\mc K_2\cup \mc K_3|\geq n-|\mc S|\geq 2 \cdot 3^{i-1}-1.$$ 
Since $\mc K_1 \cap (\mc K_2\cup \mc K_3)=\emptyset$ and $\mc K_1 \cup \mc K_2 \cup \mc K_3 = [3^i - 1]$, we conclude that $|\mc K_1|=3^{i-1}$ and $|\mc K_2\cup \mc K_3|=2 \cdot 3^{i-1}-1$.
Note that $|\C[\mc K_2 \cup \mc K_3]|=|\mc C|-1=3i-1$, since $\mc K_1\cap (\mc K_2 \cup \mc K_3)=\emptyset$.  Hence $\C[\mc K_2 \cup \mc K_3]$ is an OSJ set-join cover of the complete graph on  $\mc K_2 \cup \mc K_3$, and it is by the induction hypothesis essentially unique. 
By Remark \ref{ex:Kn} we know that its largest components are disjoint (and of order $3^{i-1}$ and $3^{i-1}-1$). This implies that
$\mc K_2$ and $\mc K_3$ are disjoint, $|\mc K_1| = |\mc K_2| = 3^{i-1}$, and $|\mc K_3| = 3^{i-1} -1$. 
The claim is proved by noting that $\C[\mc K_j]$ is an essentially unique OSJ cover of the complete graph on $\mc K_j$ for $j\in [3]$.

For $n = 2\cdot 3^i - 1, i\ge 2$, we have $\mc M_n=\{2, 3\}$. The case $k_0=3$ can be excluded in a similar way, as was done in the proof of Proposition \ref{pr:3nai} for the case $n=2\cdot 3^i$. The rest of the proof of this case is very similar to the proof for the case $n=3^i - 1$ above, and it is left to the reader. 

\medskip

Propositions \ref{pr:notunique}, \ref{pr:3nai}, and \ref{pr:3nai-1} completely decide the question of essential uniqueness of OSJ covers of $K_n$. The result is summarized in the theorem below. 

\begin{theorem} Let $n \in \mathbb{N}$, $n \ge 2$. The graph $K_n$ has essentially unique OSJ cover
if and only if $n = 3^i$ for some $i\ge 1$, $n= 2\cdot 3^i$ for some $i\ge 1$, $n = 3^i -1$ for some $i\ge 3$, or $n= 2\cdot 3^i -1$ for some $i\ge 2$.
\end{theorem}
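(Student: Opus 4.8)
The plan is to obtain the theorem simply by collating Propositions \ref{pr:notunique}, \ref{pr:3nai} and \ref{pr:3nai-1}; no new set-join construction is required, only an elementary sorting of the positive integers. For the ``if'' direction this is immediate: if $n=3^i$ or $n=2\cdot 3^i$ with $i\ge 1$ then $K_n$ has essentially unique OSJ cover by Proposition \ref{pr:3nai}, and if $n=3^i-1$ with $i\ge 3$ or $n=2\cdot 3^i-1$ with $i\ge 2$ then the same follows from Proposition \ref{pr:3nai-1}. (For the extremal value $n=2$ the graph $K_2$ manifestly has a single OSJ cover, hence an essentially unique one, so this degenerate case is treated on its own.)

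For the ``only if'' direction I would prove the contrapositive: every integer $n\ge 3$ that is not of one of the four listed forms falls under one of the three cases of Proposition \ref{pr:notunique}, and therefore $K_n$ is not essentially unique. The key observation is that the intervals $[3^i,2\cdot 3^i]$ and $[2\cdot 3^i,3^{i+1}]$, taken over all $i\ge 1$, cover $\{n\in\mathbb{N};\, n\ge 3\}$ and meet only at the points $3^i$ and $2\cdot 3^i$. Fixing $i$ and examining the block $[3^i,3^{i+1}]$, one checks by means of the explicit formula \eqref{eq:Katona} and crude size comparisons that the only members of the four families inside this block are the endpoints $3^i$, $2\cdot 3^i$, $3^{i+1}$, together with, \emph{when $i\ge 2$}, the two near-endpoints $2\cdot 3^i-1$ and $3^{i+1}-1$. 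Consequently every remaining integer of the block lies in $[3^i+1,\,2\cdot 3^i-2]$ or in $[2\cdot 3^i+1,\,3^{i+1}-2]$: for $i\ge 2$ these are exactly the second and third conditions of Proposition \ref{pr:notunique}, while for $i=1$ the remaining integers are precisely $\{4,5,7,8\}$, the first condition.

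The step needing the most care is exactly this boundary bookkeeping, since it is what pins down the index restrictions in the statement ($i\ge 3$ for $n=3^i-1$ and $i\ge 2$ for $n=2\cdot 3^i-1$): for $i=1$ the near-endpoint $2\cdot 3^i-1=5$, and for $i=2$ the near-endpoint $3^{i+1}-1=8$, are \emph{not} members of any of the four families, which is consistent precisely because Proposition \ref{pr:notunique} independently denies essential uniqueness to $K_5$ and $K_8$ through its exceptional set $\{4,5,7,8\}$. I would lay the verification out in a short table indexed by $i\ge 1$, each row listing the endpoints, the near-endpoints (present only for $i\ge 2$), and the two residual intervals, so that the correspondence with Propositions \ref{pr:notunique}, \ref{pr:3nai} and \ref{pr:3nai-1} can be read off directly; once that table is in place the proof is finished.
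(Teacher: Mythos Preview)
Your approach is exactly the paper's: the theorem is presented there as the summary of Propositions \ref{pr:notunique}, \ref{pr:3nai} and \ref{pr:3nai-1} with no further argument, and your proposal simply makes explicit the interval bookkeeping that the paper leaves to the reader. One small slip: your parenthetical places $n=2$ in the ``if'' direction, but $n=2$ does not belong to any of the four listed families; since $K_2$ does have a unique OSJ cover (cf.\ Remark \ref{ex:Kn}), this actually exposes an omission in the theorem statement itself rather than a flaw in your strategy, and the paper's one-line proof does not address it either.
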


\begin{remark}
Equations \eqref{eq:Katona}, \eqref{eq:Katona_factor} and \eqref{eq:Katon_ind} express $\spr(K_n)$ in three different ways:
\begin{align*}
    \spr(K_n)&=\begin{cases}
3i &\text{if }\, 2\cdot 3^{i-1} <n \leq 3^i \\
3i+1 &\text{if }\,  3^{i} <n \leq 4\cdot 3^{i-1} \\
3i+2 &\text{if }\, 4 \cdot 3^{i-1} <n \leq 2\cdot 3^i \\
\end{cases} \\
&= \min\left\{\sum_{i=1}^t q_i; n\leq \prod_{i=1}^t q_i\right\} \\
&= \min\left\{k + \spr\left(K_{\left\lceil\frac{n}{k}\right\rceil}\right); k = 2, 3, 4, 5\right\}. 
\end{align*}
Through those expressions we have seen how OSJ covers can be constructed recursively. 
\end{remark}

\begin{remark}
In this section, an idea of constructing set-join covers from set-join covers of smaller graphs was repeatedly used. This idea can be formalised using co-normal products of graphs. Let $G$ and $H$ be simple graphs (with loops). The co-normal product of  $G$ and $H$, denoted by $G * H$, is the graph with the vertex set $V(G)\times V(H)$, and the edge set  
 $$E(G * H)=\left\{
 \{(g,h),(g',h')\}; \{g,g'\} \in E(G) \text{ or } \{h, h'\}\in E(H)\right\}.$$
Let $\C_G$
 and $\C_H$ be set-join covers of $G$ and $H$, respectively. Using $\C_G$ and $\C_H$ we can build a set-join cover $\C'$ of $G *H$ as follows. For every 
 $\mc K \in V(\C_G)$ define
 $\mc K':=\{(g,h); g\in \mc K, h\in V(H)\}$
and for every 
 $\mc L \in V(\C_H)$ define
 $\mc L':=\{(g,h); g\in V(G), h\in \mc L\}$. 
Then
$$\C':=\{\mc K_1'\vee_{V(G * H)} \mc K_2';\, \mc K_1\vee_{V(G)} \mc K_2 \in \C_G\} \cup \{\mc L_1'\vee_{V(G * H)} \mc L_2';\, \mc L_1\vee_{V(H)} \mc L_2 \in \C_H\}$$
is a set-join cover of $G * H$. In particular, 
\begin{equation}\label{eq:conormal}
\spr(G * H)\leq \spr(G)+\spr(H).
\end{equation}

Since $K_p * K_q$ is isomorphic to $K_{pq}$, we see from above, that the inequality \eqref{eq:conormal} can be strict (for example for $p=q=5$), but it cannot be improved in general (it is equality for example for $p=3^i$ and $q=3^j$). 
\end{remark}

\begin{remark}
If $A \in \mc S_n^+$, then its Boolean rank is the NMF-rank of the pattern graph of the matrix $A$  (see \cite{NMFGillis}):
$$ \rk_{01}(A) = \min\{\rk_+(B); B \in \mc S^+(G(A))\}.$$
It follows immediately from (\ref{eq:Katona}) that $\spr(K_n)$ behaves asymptotically as $\spr(K_n) \sim \frac{3}{\log 3} \log n$. On the other hand, if $A \in \mc S^+(K_n)$ its Boolean rank $\rk_{01}(A)$ is the minimal $k$ such that $n \le \binom{k}{\lfloor k/2 \rfloor}$, \cite{NMFGillis}. It follows that 
NMF-rank of such matrix is asymptotically bounded below by $\frac{1}{\log 2} \log n$. It means that the minimal SNT-rank and the minimal NMF-rank that can be achieved on the complete graphs have the same order of asymptotic behaviour, but the constant differs by factor $1.893$.
\end{remark}

\section{Concluding remarks} \label{sec:motivation}

Finding $\spr(G)$ for a given graph $G$ is well-defined combinatorial optimisation problem, that is of independent interest. In this section we highlight a few settings, where the theory of this paper naturally appears and provides relevant groundwork. 

The most immediate application of $\spr(G)$ is the study $\spr(A)$ and SN-Trifactorizations of $A \in \mc S_n^+$. Below we offer an example how this can be done for a given matrix. 

\begin{example}
The matrix
$$A=\left(
\begin{array}{cccc}
 4 & 1 & 1 & 4 \\
 1 & 1 & 2 & 0 \\
 1 & 2 & 0 & 3 \\
 4 & 0 & 3 & 1 \\
\end{array}
\right)$$
has rank $3$. Let $G:=G(A)$, and let $H$ be an induced subgraph of $G$ on the set $\{2, 3, 4\}$. Since any matrix in $\mc S^+(H)$ is invertible, we have $\spr(G) \ge 3$. Note also that $H$ has the unique OSJ cover $\C_H$, where each component in $V(\C_H)$ is a singleton. 

The set-join cover $\C_H$ can be extended to a set-join cover of $G$:
$$\C=\{\mc K_1 \vee_{[4]} \mc K_1, \mc K_1 \vee_{[4]} \mc K_2, \mc K_2 \vee_{[4]} \mc K_3,\mc K_3 \vee_{[4]} \mc K_3\},$$
where
$$ 2 \in \mc K_1, 3 \in \mc K_2, 4 \in \mc K_3,$$
and $1$ is an element of at least two of $\mc K_i$, $i=1,2,3$. This describes all possible set-join covers of $G$, and, in particular, shows that
 $\spr(G) = 3$. 
 
Suppose that $\spr(A) = 3$ and let $A = BCB^T$ be an optimal SN-Trifacto\-ri\-za\-tion. By observations above and Theorem \ref{th:main} the matrix $B$ has the form 
$$ B = \left(
\begin{array}{c}
 {\bf b}^T \\
 D \\
\end{array}
\right),$$
where ${\bf b} \in \R_+^3$ has at least two nonzero entries and $D$ is an invertible nonnegative diagonal matrix.  It follows that 
$$ A = BCB^T = \left(
\begin{array}{c}
 {\bf b}^T \\
 D \\
\end{array}\right) C \left(
\begin{array}{cc}
 {\bf b} & D \\
\end{array}\right) 
= \left(\begin{array}{cc}
 {\bf b}^TC{\bf b} & {\bf b}^TCD \\
 DC{\bf b}   & DCD \\
\end{array}\right),  $$ 
so 
$$ DCD = \left(
\begin{array}{ccc}
 1 & 2 & 0 \\
 2 & 0 & 3 \\
 0 & 3 & 1 \\
\end{array} \right) \text{ and }  DC{\bf b} = \left(
\begin{array}{c}
 1 \\
 1 \\
 4 \\
\end{array}
\right).$$ 
Thus 
$$ {\bf b} = D(DCD)^{-1}(DC{\bf b}) = D\left(\begin{array}{ccc}
 1 & 2 & 0 \\
 2 & 0 & 3 \\
 0 & 3 & 1 \\
\end{array} \right)^{-1}\left(
\begin{array}{c}
 1 \\
 1 \\
 4 \\
\end{array}
\right) = D \left(
\begin{array}{c}
 -1 \\
 1 \\
 1 \\
\end{array}
\right),$$
which means that vector ${\bf b}$ has a negative entry, a contradiction. It follows that $\spr(A) = 4$.
\end{example}

Given an $n \times n$ nonnegative symmetric matrix $A$ there are existing algorithms that given $k$ find matrices  $B \in \R_+^{n \times k}$ and $C \in \mc S_k^+$ that minimize $\norm{A-BCB^T}_F$,  \cite{MR2806386}. Using the theory developed in this paper it should be possible to extend such algorithms to find matrices  $B \in \R_+^{n \times k}$ and $C \in \mc S_k^+$ where  $BCB^T$ has some prescribed zero-nonzero pattern. 

In Remark \ref{rem:interpretation} we have seen an interpretation of $\spr(G)$ as the minimal number of subsets of a given set $V$ that need to be formed if we want to organize required and forbidden interactions as given by the graph $G$.   This interpretation motivates variations of the question considered in this paper. For example, minimizing the number of meetings  corresponds to finding set-join covers $\C$ containing minimal number of set-joins.  Posing restrictions on the size of groups would mean looking for set-join covers $\C$ with restrictions on the cardinalities of the elements in $V(\C)$. In addition to having pairs of elements that are required and pairs that are forbidden to interact, we may have pairs of elements that can (but are not required to) interact. To solve this question we would be looking for partial set-join covers of $G$. 

\section*{Acknowledgments}
Damjana Kokol Bukov\v{s}ek acknowledges financial support from the ARIS (Slovenian Research and Innovation Agency, research core funding No. P1-0222).

\bibliographystyle{amsplain}
\bibliography{biblio}

\providecommand{\bysame}{\leavevmode\hbox to3em{\hrulefill}\thinspace}
\providecommand{\MR}{\relax\ifhmode\unskip\space\fi MR }
\providecommand{\MRhref}[2]{%
  \href{http://www.ams.org/mathscinet-getitem?mr=#1}{#2}
}
\providecommand{\href}[2]{#2}
\begin{thebibliography}{1}

\bibitem{MR1986666}
Abraham Berman and Naomi Shaked-Monderer, \emph{Completely positive matrices},
  World Scientific Publishing Co., Inc., River Edge, NJ, 2003. \MR{1986666}

\bibitem{MR3469704}
Richard~A. Brualdi and Herbert~J. Ryser, \emph{Combinatorial matrix theory},
  paperback ed., Encyclopedia of Mathematics and its Applications, vol.~39,
  Cambridge University Press, New York, 2013. \MR{3469704}

\bibitem{MR732208}
Mao~Cheng Cai, \emph{On a problem of {K}atona on minimal completely separating
  systems with restrictions}, Discrete Math. \textbf{48} (1984), no.~1,
  121--123. \MR{732208}

\bibitem{NMFGillis}
Nicolas Gillis, \emph{Nonnegative matrix factorization}, SIAM, 2020.

\bibitem{MR1861120}
Frank~J. Hall, Zhongshan Li, and Di~Wang, \emph{Symmetric sign pattern matrices
  that require unique inertia}, Linear Algebra Appl. \textbf{338} (2001),
  153--169. \MR{1861120}

\bibitem{MR801393}
Ross Honsberger, \emph{Mathematical gems. {III}}, The Dolciani Mathematical
  Expositions, vol.~9, Mathematical Association of America, Washington, DC,
  1985. \MR{801393}

\bibitem{MR4547308}
Damjana Kokol~Bukov\v{s}ek and Helena \v{S}migoc, \emph{Symmetric nonnegative
  matrix trifactorization}, Linear Algebra Appl. \textbf{665} (2023), 36--60.
  \MR{4547308}

\bibitem{MR2806386}
Lu~Lin and Zhong-Yun Liu, \emph{An alternating projected gradient algorithm for
  nonnegative matrix factorization}, Appl. Math. Comput. \textbf{217} (2011),
  no.~24, 9997--10002. \MR{2806386}

\bibitem{MR422036}
Andrew Chi~Chih Yao, \emph{On a problem of {K}atona on minimal separating
  systems}, Discrete Math. \textbf{15} (1976), no.~2, 193--199. \MR{422036}

\end{thebibliography}

\end{document}